\documentclass[final]{siamart1116}



\usepackage[T1]{fontenc}
\usepackage{lipsum}
\usepackage{amsfonts}
\usepackage{graphicx}
\usepackage{epstopdf}
\usepackage{algorithmic}
\Crefname{ALC@unique}{Line}{Lines}
\usepackage{graphicx}
\usepackage{epstopdf}
\usepackage{url}
\usepackage{amsmath}
\usepackage{mathtools}
\usepackage{subcaption}
\usepackage{booktabs}
\usepackage{multirow}
\usepackage{cite}

\newsiamremark{example}{Example}
\newsiamthm{assumption}{Assumption}

\ifpdf
  \DeclareGraphicsExtensions{.eps,.pdf,.png,.jpg}
\else
  \DeclareGraphicsExtensions{.eps}
\fi

\numberwithin{theorem}{section}

\newcommand{\TheTitle}{A Thermodynamically Consistent Fractional 
Visco-Elasto-Plastic Model with Memory-Dependent Damage for Anomalous 
Materials} 
\newcommand{\TheAuthors}{Jorge Suzuki, Yongtao Zhou, Marta D'Elia, Mohsen 
Zayernouri}

\headers{Fractional V-E-P Model with Memory-Dependent Damage}{\TheAuthors}

\title{{\TheTitle}}

\author{
  Jorge L. Suzuki\thanks{Department of Mechanical Engineering and Department 
  of Computational Mathematics, Science and Engineering, Michigan State 
  University, East Lansing, MI 48824, USA
  	(\email{suzukijo@msu.edu}, \email{suzukijo@egr.msu.edu}).}
  \and
  Yongtao Zhou\thanks{School of Mathematics and Statistics, Huazhong 
  University of Science and Technology, Wuhan 430074, China and Department of 
  Mechanical Engineering, Michigan State University, East Lansing, MI 48824, USA
  	(\email{yongtaozh@126.com}).}
  \and
  Marta D'Elia\thanks{Center for Computing Research, Sandia National 
  Laboratories, P.O. Box 5800, MS 9159, Albuquerque, NM 87185, USA
  	(\email{mdelia@sandia.gov}).}
  \and
  Mohsen Zayernouri\thanks{Department of Mechanical Engineering and Department 
  of Statistics and Probability, Michigan State University, East Lansing, MI 
  48824, USA
  	(\email{zayern@msu.edu}, \email{zayern@egr.msu.edu}), Corresponding Author.}
}

\usepackage{amsopn}


\ifpdf
\hypersetup{
  pdftitle={\TheTitle},
  pdfauthor={\TheAuthors}
}
\fi

\newsiamthm{remark}{Remark}

\DeclareMathOperator{\sign}{\mathrm{sign}}




\begin{document}

\maketitle

\begin{abstract}
	We develop a thermodynamically consistent, fractional visco-elasto-plastic 
model coupled with damage for anomalous materials. The model utilizes 
Scott-Blair rheological elements for both visco- elastic/plastic parts. The 
constitutive equations are obtained through Helmholtz free-energy potentials 
for Scott-Blair elements, together with a memory-dependent fractional yield 
function and dissipation inequalities. A memory-dependent Lemaitre-type damage 
is introduced through fractional damage energy release rates. For 
time-fractional integration of the resulting nonlinear system of equations, we 
develop a first-order semi-implicit 
fractional return-mapping algorithm. We also develop a finite-difference 
discretization for the fractional damage energy release rate, which results 
into Hankel-type matrix-vector operations for each time-step, allowing us to 
reduce the computational complexity from $\mathcal{O}(N^3)$ to 
$\mathcal{O}(N^2)$ through the use of Fast Fourier Transforms. Our numerical 
results demonstrate that the fractional orders for visco-elasto-plasticity play 
a crucial role in damage evolution, due to the competition between the 
anomalous plastic slip and bulk damage energy release rates.
\end{abstract}

\begin{keywords}
  memory-dependent free-energy density, fractional 
  return-mapping algorithms, memory-dependent damage, fractional mechanical 
  dissipation, Hankel matrices.
\end{keywords}

\begin{AMS}
   34A08, 74A45, 74D10, 74S20, 74N30.
\end{AMS}

\section{Introduction}

Accurate and predictive modeling of material damage and failure for a wide 
range of materials poses multi-disciplinary challenges on experimental 
detection, consistent physics-informed models and efficient 
algorithms. Material failure arises in mechanical and 
biological systems as a consequence of internal damage, 
characterized in the micro-scale by the presence and growth of discontinuities 
\textit{e.g.}, microvoids, microcracks and bond breakage. Continuum Damage 
Mechanics (CDM) treats such effects in the macroscale through a representative 
volume element (RVE) \cite{Lemaitre2005}. When loading plastic crystalline 
materials, an initial hardening stage is observed from motion, arresting and 
network formation of dislocations, which is later overwhelmed by damage 
mechanisms, \textit{e.g.} multiplication of micro-cracks/voids, followed by 
their growth and coalescing, releasing bulk energy from the RVE. Classical CDM 
models were proposed {and validated} in the past decades to describe the 
mechanical degradation, {\textit{e.g.}}, of 
ductile, brittle, and hyperelastic materials \cite{Simo1987,Lemaitre1996}. 
Particularly, Lemaitre's ductile damage model \cite{Lemaitre1996,Lemaitre2005} 
has been {extensively employed for} plasticity and visco-plasticity modeling of 
ductile {materials}. In such models, developing proper damage potentials driven 
by the {so-called \textit{damage energy release rate} \cite{Lemaitre2005} is a 
	critical step.}

{Modeling the standard-to-anomalous damage evolution for power-law materials 
	has  additional challenges due to the non-Gaussian processes occurring on} 
fractal-like media. Fractional constitutive laws {utilize Scott-Blair (SB)} 
elements \cite{Blair1943,Blair1947} as 
rheological building blocks {that model the soft material response as a 
	power-law} memory-dependent device, interpolating
between purely elastic/viscous behavior. {A mechanical representation of the SB 
	element was developed by Schiessel \cite{Schiessel1993}, as a hierarchical, 
	continuous ``ladder-like" 
	arrangement of {canonical Hookean/Newtonian} elements (\textit{see Figure 
		\ref{fig:SB}}).} Later on, Schiessel \cite{Schiessel1995} 
generalized several standard visco-elastic models (Kelvin-Voigt, Maxwell, 
Kelvin-Zener, Poynting-Thompson) to their fractional 
counterparts by fully replacing the {canonical} elements with SB elements. Of 
particular interest, Lion \cite{Lion1997} {proved the thermodynamic consistency 
	of the SB element from a mechanically-based fractional Helmholtz 
	free-energy 
	density.}

With particular arrangements of SB and {standard} elements, fractional models 
were applied, \textit{e.g.}, to 
describe the \textit{far from equilibrium} power-law dynamics of 
multi-fractional visco-elastic 
\cite{Jaishankar2013,Magin2010,Nasholm2013Wave,Giusti2017,
	naghibolhosseini2015estimation,Naghibolhosseini2018}, {distributed} 
visco-elastic \cite{Dekic2017} and visco-elasto-plastic 
\cite{Sumelka2014VP,Suzuki2016,Xiao2017,Hei2018,Sumelka2019Soil} 
complex materials. Concurrently, significant 
advances in numerical methods allowed {numerical solutions to} time- and space- 
fractional {partial differential equations (FPDEs)} {for} smooth/non-smooth 
solutions, such as finite-difference (FD) schemes \cite{Lubich1986,Lin2007}, 
fractional Adams methods \cite{Diethelm2004,Zayernouri2016MS}, 
implicit-explicit (IMEX) 
schemes \cite{Cao2016,Zhou2019IMEX}, spectral methods 
\cite{Samiee2016,samiee2017unified}, fractional subgrid-scale modeling 
\cite{Samiee2019FSSM}, fractional sensitivity {equations} 
\cite{kharazmi2017FSEM}, operator-based uncertainty 
quantification \cite{kharazmi2018operatorbased} and self-singularity-capturing 
approaches \cite{Suzuki2018Singularity}. 

Despite the {significant contributions on} fractional constitutive laws, {few 
	works incorporated damage mechanisms. Zhang \textit{et al.} 
	\cite{Zhang2014creep} developed a nonlinear, visco-elasto-plastic creep 
	damage 
	model for concrete, where the damage evolution was defined through an 
	exponential function of time.} A similar model was proposed by Kang 
	\textit{et 
	al.} \cite{Kang2015} {and applied to coal creep. 
	Caputo and Fabrizio \cite{Caputo2015} developed a variable order 
	visco-elastic model, where the variable order was regarded as a phase-field 
	driven damage. Alfano and Musto \cite{Alfano2017} 
	developed a cohesive zone, damaged fractional Kelvin-Zener model, and 
	studied the influence of Hooke/SB damage energy release rates on damage 
	evolution, motivating further studies on crack propagation mechanisms in 
	visco-elastic media. Tang \textit{et al.} \cite{Tang2018} 
	developed a variable order rock creep model, with damage evolution as an 
	exponential function of time.  Recently, Giraldo-Londo\~{n}o \textit{et 
	al.} 
	\cite{Giraldo-Londono2019} developed a two-parameter, two-dimensional (2-D) 
	rate-dependent cohesive fracture model.}

{A key aspect to develop failure models relies on consistent forms of damage 
	energy release rates, usually appearing in the material-specific form of 
	Helmholtz free-energy densities. For standard materials, direct summations 
	of 
	elastic/hyperelastic free-energies of the system are used. However, such 
	process is non-trivial when modeling anomalous materials, due to the 
	intrinsic 
	mixed elasticity/viscosity of SB elements. Fabrizio 
	\cite{Fabrizio2014} introduced a Graffi-Volterra free-energy for fractional 
	models, but defined it without sufficient physical justification. Deseri 
	\textit{et al.} \cite{Deseri2014} developed free-energies for fractional 
	hereditary materials, with the notion of order-dependent 
	\textit{elasto-viscous} and \textit{visco-elastic} behaviors. Lion 
	\cite{Lion1997} derived the isothermal Helmholtz free-energy density for SB 
	elements using a discrete-to-continuum arrangement of standard Maxwell 
	branches, and employed it in the Clausius-Duhem inequality to obtain the 
	stress-strain relationship. Later on, Adolfsson \textit{et al.} 
	\cite{Adolfsson2005} employed Lion's 
	approach to prove the thermodynamic admissibility of the SB constitutive 
	law written as a Volterra integral equation of first kind.}

To the authors' best knowledge, only Alfano and Musto \cite{Alfano2017} 
coupled the fractional free-energy density to a damage evolution equation in 
viscoelasticity, but fractional extensions of (non-exponential) damage for 
visco-elasto-plastic materials are still lacking. In addition, for 
damage models, efficient numerical methods for fractional free-energy 
computations are also virtually nonexistent in the literature. A numerical 
approximation was done by Burlon \textit{et al.} \cite{Burlon2014}, 
through a finite summation of free-energies from Hookean elements, which is a 
truncation of the infinite number of relaxation modes carried by the fractional 
operators. Alfano and Musto \cite{Alfano2017} briefly described how to 
discretize the SB free-energy using a midpoint finite-difference scheme. A few 
numerical results were presented for damage evolution, but the authors did not 
describe the discretizations and no accuracy is investigated for the numerical 
scheme.

In this work we develop a thermodynamically consistent, one-dimensio\-nal (1-D)
fractional visco-elasto-plastic model with memory-dependent da\-mage in the 
context of CDM. The main characteristics of the model follow:
\begin{itemize}
	\item We employ SB elements in both visco-elastic and 
	visco-plastic parts, {respectively, with orders $\beta_E,\beta_K \in 
		(0,1)$, leading to power-law effects in both ranges.}
	
	\item The damage reduces the total free-energy of the model, {while 
		constitutive laws} are obtained through the Clausius-Duhem inequality.
	
	\item The yield function is time-fractional rate-dependent, while the 
	damage potential is Lemaitre-like. {The damage energy release rate is taken 
		as the SB Helmholtz free-energy density to describe the anomalous bulk 
		energy loss.}
	
	\item We prove the positive dissipation, and therefore the thermodynamic 
	consistency of the developed model (\textit{see Theorem 
		\ref{thm:positive_D}}).
\end{itemize}

{Since obtaining analytical solutions for the resulting nonlinear system of 
	multi-term visco-elasto-plastic fractional differential equations (FDEs) 
	coupled with damage} is cumbersome or even impossible, we performed an 
efficient time-integration framework as follows:
\begin{itemize}
	\item We develop a first-order, semi-implicit fractional return-mapping 
	algorithm, {with explicit evaluation of damage in the stress-strain 
		relationship and yield function. An implicit FD scheme is employed to 
		the 
		ODEs for plastic and damage variables. The time-fractional 
		stress-strain 
		relationship and yield function are discretized using the L1 FD scheme 
		from 
		Lin and Xu \cite{Lin2007}.}
	
	\item {We develop a fully-implicit scheme for the SB Helmholtz free-energy 
		density, and hence to the fractional damage energy release rate. We 
		then 
		exploit the structure of the discretized} energy and apply Fast Fourier 
	Transforms (FFTs) to obtain an efficient scheme.
	
	\item The accuracy of free-energy discretization is proved to 
	be of order $\mathcal{O}(\Delta t^{2-\beta})$, and numerical tests 
	{show} a computational complexity of order $\mathcal{O}(N^2 \log N)$, with 
	N being the number of time-steps.
\end{itemize}

{The developed fractional return-mapping algorithm can be easily incorporated 
	to existing finite element (FE) frameworks as a constitutive box. Numerical 
	tests are performed with imposed monotone and cyclic strains, and 
	demonstrate 
	that:}
\begin{itemize}
	\item {Softening, hysteresis and low-cycle fatigue can be modeled.}
	
	\item {Memory-dependent damage energy release rates induce anomalous 
		damage evolutions with competing visco-elastic/plastic effects, without 
		changing the form of Lemaitre's damage potential.}
\end{itemize}

{The developed model motivates applications to failure of biological materials 
	\cite{Bonadkar2016}, where micro-structural 
	evolution can be upscaled to the continuum through evolving fractional 
	orders $\beta_E$, $\beta_K$ \cite{Mashayekhi2019Fractal} and 
	damage $D$. The memory-dependent fractional damage energy release rates 
	motivate studies on anomalous bulk-to-surface energy loss in damage 
	accumulation/crack propagation of, \textit{e.g.}, bone tissue, where 
	intrinsic/extrinsic plasticity/crack-bridging mechanisms \cite{Wegst20151} 
	lead to a complex nature of failure.}

This work is organized as follows: In Section \ref{Sec:Definitions} {we present 
	definitions of fractional operators}. In Section \ref{Sec:SB}, we present 
	the 
thermodynamics and rheology of SB elements. In Section \ref{Sec:VEPD}, {we 
	develop the fractional visco-elasto-plastic model with damage, followed by 
	its 
	discretization}. A series of numerical tests are shown in Section 
\ref{Sec:NumericalResults}, followed by discussions and concluding remarks in 
Section \ref{Sec:Conclusions}.

\section{Definitions of Fractional Calculus}
\label{Sec:Definitions}
%

We start with some preliminary definitions of fractional calculus 
\cite{Podlubny99}. The left-sided Riemann-Liouville integral of order {$\beta 
	\in (0,1)$} is defined as
\begin{equation}
\label{Eq: left RL integral}
(\prescript{RL}{t_L}{\mathcal{I}}_{t}^{\beta} f) (t) = \frac{1}{\Gamma(\beta)} 
\int_{t_L}^{t} \frac{f(s)}{(t - s)^{1-\beta} }\, ds,\,\,\,\,\,\, t>t_L, 
\end{equation}
where $\Gamma$ represents the Euler gamma function and $t_L$ denotes the lower 
integration limit. The corresponding inverse operator, i.e., the left-sided 
fractional derivative of order $\beta$, is then defined based on (\ref{Eq: left 
	RL integral}) as  
\begin{equation}
\label{Eq: left RL derivative}
(\prescript{RL}{t_L}{\mathcal{D}}_{t}^{\beta} f) (t) = \frac{d}{dt} 
(\prescript{RL}{t_L}{\mathcal{I}}_{t}^{1-\beta} f) (t) = 
\frac{1}{\Gamma(1-\beta)}  \frac{d}{dt} \int_{t_L}^{t} \frac{f(s)}{(t - 
	s)^{\beta} }\, ds,\,\,\,\,\,\, t>t_L. \nonumber
\end{equation}
{Also, the left-sided Caputo derivative of order $\beta 
	\in (0,1)$ is obtained as}
\begin{equation}
\label{Eq: left Caputo derivative}
(\prescript{C}{t_L}{\mathcal{D}}_{t}^{\beta} f) (t) = 
(\prescript{RL}{t_L}{\mathcal{I}}_{t}^{1-\beta} \frac{df}{dt}) (t) = 
\frac{1}{\Gamma(1-\beta)}  \int_{t_L}^{t} \frac{f^{\prime}(s)}{(t - s)^{\beta} 
}\, ds,\,\,\,\,\,\, t>t_L. \nonumber
\end{equation}
{The definitions of Riemann-Liouville and Caputo derivatives are linked by the 
	following relationship:
	\begin{equation}
	\label{Eq: Caputo vs. Riemann}
	(\prescript{RL}{t_L}{\mathcal{D}}_{t}^{\beta} f) (t)  =  
	\frac{f(t_L)}{\Gamma(1-\beta) (t+t_L)^{\beta}}  +   
	(\prescript{C}{t_L}{\mathcal{D}}_{t}^{\beta} f) (t), \nonumber
	\end{equation}
	which denotes that the definition of the aforementioned derivatives 
	coincide 
	when dealing with homogeneous Dirichlet initial/boundary conditions.}

\subsection{Interpretation of Caputo derivatives in terms of nonlocal 
	vector calculus}
\label{Section:fractional-limit}
In this section we show that the Caputo derivative can be reinterpreted as the 
limit of a nonlocal truncated time derivative \cite{Du2017}. This fact 
establishes a connection between nonlocal initial value problems and their 
fractional counterparts, which can benefit from the nonlocal theory.

Given a nonnegative and symmetric kernel function 
$\rho_\delta(s)=\rho_\delta(|s|)$, a nonlocal, weighted, gradient operator can 
be defined as \cite{Du2013}
\begin{equation}
\mathcal G_\delta f(t) = \lim\limits_{\epsilon\to 
0}\displaystyle\int_\epsilon^\delta
(f(t)-f(t-s))s\rho_\delta(s)\,ds,
\end{equation}
when the limit exists in $L^2(0,T)$ for a function $f\in L^2(0,T)$. It is 
common to assume that the kernel function $\rho_\delta$ has compact support in 
$[-\delta,\delta]$ and a normalized moment:
\begin{equation}
\displaystyle\int_0^\delta s^2\rho_\delta(s)\,ds = 1.
\end{equation}
Here, the parameter $\delta>0$ represents the extent of the nonlocal 
interactions or, in case of time dependence, the memory span. In the nonlocal 
theory it is usually referred to as {\it horizon}.

Note that at the limit of vanishing nonlocality, {\it i.e.} as $\delta\to 0$, 
$\mathcal G_\delta$ corresponds to the classical first order time derivative 
operator $\frac{d}{dt}$. In this work, we are interested in the limit of 
infinite interactions, {\it i.e.} as $\delta\to \infty$. Specifically, when the 
initial data $f(t):=f(0)$ for all $t\in(-\infty,0)$ and the kernel function is 
defined as
\begin{equation}
\rho_\infty(s)=\dfrac{\beta}{\Gamma(1-\beta)}s^{-\beta-2}, 
\quad {\rm for}\;\beta\in(0,1), 
\end{equation}
the nonlocal operator $\mathcal G_\delta$ corresponds to the Caputo fractional 
derivative for $t>0$, for a piecewise differentiable function $f\in 
C(-\infty,T)$ such that $f'\in L^1(0,T)\cap C(0,T]$. Formally,
\begin{equation}
\mathcal G_\infty f(t) = 
(\prescript{C}{0}{\mathcal D }_t^\beta f)(t).
\end{equation}
Note that a similar property holds true for fractional derivatives in space, 
see \cite{DElia2013}.
\subsubsection{Note on well-posedness} Paper \cite{Du2017} analyzes the 
well-posedness of nonlocal initial value problems. More specifically, it 
proves, under certain conditions on the parameters, that the following equation 
has a unique solution and depends continuously upon the data.
\begin{equation}
\begin{aligned}
\mathcal G_\delta \gamma + H \gamma &= F & \;\;t\in (0,T],\\[1mm]
\gamma &= G & \;\; t\in (-\delta,0),
\end{aligned}
\end{equation}
for $H>0$ and $F$ and $G$ in suitable functional spaces.

\section{Thermodynamics of Fractional Scott-Blair Elements}
\label{Sec:SB}

{We} present the thermodynamic principles used in this 
work, and then we introduce the Helmholtz free-energy density and 
constitutive law for the fractional SB element. {Such fractional element is 
	the rheological building block of our modeling approach, providing a 
	constitutive interpolation between a Hookean $(\beta \to 0)$ and Newtonian 
	$(\beta \to 1)$ element \textit{(see Figure \ref{fig:SB})}. Furthermore, 
	the 
	SB element can be interpreted as an infinite self-similar arrangement of 
	standard Maxwell elements, which naturally leads to fractional operators in 
	the constitutive law \cite{Schiessel1993}.}
\begin{figure}[t!]
	\centering
	\begin{subfigure}[b]{0.45\textwidth}
		\centering
		\includegraphics[width=\columnwidth]{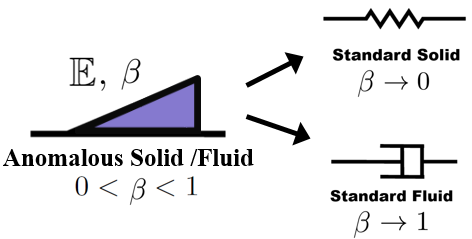}
	\end{subfigure}
	\hspace{1mm}
	\begin{subfigure}[b]{0.45\columnwidth}
		\centering
		\includegraphics[width=0.7\columnwidth]{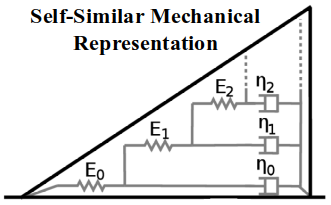}
	\end{subfigure}
	\caption{\textit{(left)} {Schematics of the SB element recovering standard 
			limit cases. \textit{(right)} The SB element seen as an infinite, 
			hierarchical mechanical representation of canonical elements, 
			coding an 
			infinite number of relaxation times. The pair $(\mathbb E,\beta)$ 
			represents a dynamic process of the material. \label{fig:SB}}}
\end{figure}

\subsection{Thermodynamic Principles}
\label{Sec:Thermodynamics}

Let a closed system {undergo} an irreversible, isothermal, 
strain-driven thermodynamic process. We analyze an infinitesimal material 
region {at a position $x$ and time $t$} of a continuum 
deformable body $\mathcal{B}$. {Let} the first law of thermodynamics in {rate} 
form \cite{Bejan2016} be defined as:
\begin{equation}\label{eq:first-law}
\dot{e} = \dot{q} - \dot{w},
\end{equation}
where $\dot{e}(x,t)$ $[J.s^{-1}.kg^{-1}]$ denotes the specific rate of 
internal energy, the term $\dot{q}(x,t)$ $[J.s^{-1}.kg^{-1}]$ represents the 
rate of 
specific heat 
exchange, and $\dot{w}(x,t)$  $[J.s^{-1}.kg^{-1}]$ denotes the 
{stress power transferred into the bulk due to 
	external} forces \cite{Gurtin2010}. In this work, $\tau(x,t)$ represents 
	the 
stress state and $\dot{\varepsilon}$ the strain rate. {We} also consider the 
second law of 
thermodynamics, postulating the irreversibility of entropy production, 
given, in specific form, by:
\begin{equation} \label{eq:second-law}
\dot{s} \ge \dot{q}/\theta,
\end{equation}
where $\dot{s}(x,t)\,[J.s^{-1}.kg^{-1}.K^{-1}]$ denotes the rate of specific 
entropy production and $\theta(x,t) = \theta_0\,[K]$ represents the 
{constant} temperature. Let 
$\psi(x,t):\mathbb{R}\times\mathbb{R}^+ \to \mathbb{R}^+$ be the 
Helmholtz free-energy density with units $[J.m^{-3}]$, representing the 
available energy to perform work, defined by $\psi := \rho\left(e - \theta 
s\right)$, with the rate form $\dot{\psi} = \rho\left(\dot{e} - \theta 
\dot{s}\right)$ {for the isothermal case}. Combining the first 
and second laws, respectively, (\ref{eq:first-law}) and (\ref{eq:second-law}), 
with $\dot{\psi}$ and {taking} the stress power $\dot{w} = 
-\tau \dot{\varepsilon}$, we obtain the Clausius-Duhem 
inequality, {which states the non-negative dissipation rates} 
\cite{deSouzaNeto2008}:
\begin{equation}\label{eq:CD-2}
-\dot{\psi} + \tau \dot{\varepsilon} \ge 0, \quad \forall x \in \mathcal{B}.
\end{equation}
{Satisfying the dissipation inequality 
	(\ref{eq:CD-2}) is here taken as the necessary condition for the potential 
	$\psi$ and the stress $\tau$ to be thermodynamically admissible.}

\subsection{Helmholtz Free-Energy Density}
\label{Section:Free-energy}

{We present the free-energy under consideration for the 
	employed SB element, here referred to a given material coordinate of a 
	continuum body or a lumped mechanical system. We start with the fractional 
	Helmholtz free-energy density developed by Lion \cite{Lion1997}, obtained 
	through an integration of a continuum spectrum of Maxwell branches leading 
	to 
	the following definition for $\psi(\varepsilon):\mathbb{R}\to\mathbb{R}^+$:}
\begin{equation}\label{eq:ve_potential}
\psi(\varepsilon) = \frac{1}{2} \int^\infty_0 \tilde{E}(z) \left[ \int^t_0 
\exp\left(-\frac{t-s}{z}\right) \dot{\varepsilon}(s)\,ds \right]^2\, dz,
\end{equation}
{where we the strain $\varepsilon$ is taken as the state 
	variable. The term 
	$\tilde{E}(z):\mathbb{R}^+\to\mathbb{R}^+$ denotes the power-law relaxation
	spectrum, given by}
\begin{equation}\label{eq:rel_spectrum}
\tilde{E}(z) = \frac{\mathbb{E}}{\Gamma(1-\beta) \Gamma(\beta) z^{\beta + 1}}, 
\quad 0 < \beta < 1, \quad \mathbb{E}\in\mathbb{R}^+, \nonumber
\end{equation}
which with (\ref{eq:ve_potential}), codes {an} infinite number of relaxation 
times. {The} pseudo-constant $\mathbb{E}$ has units $[Pa.s^{\beta}]$, where 
the unique pair $(\mathbb{E},\beta)$ {codes} a dynamic process instead of an 
equilibrium state of the material \cite{Jaishankar2013}. Let 
$\mathcal{D}_{mech}$ denote the mechanical dissipation of the SB element. We 
introduce the following Lemmas:

\begin{lemma}\label{lemma:stress_strain}
	{The SB element} stress-strain relationship 
	$\tau(t):\mathbb{R}^+\to\mathbb{R}$ 
	resulting from (\ref{eq:ve_potential}) and the Clausius-Duhem inequality
	(\ref{eq:CD-2}) is given by 
	\begin{equation}\label{eq:stress_strain_SB}
	\tau(t) = \int^\infty_0 \tilde{E}(z) \left( \int^t_0 
	\exp\left(-\frac{t-s}{z}\right) \dot{\varepsilon}(s)\,ds \right)\, dz =    
	\mathbb{E}\,{}^C_0 \mathcal{D}^\beta_t \varepsilon(t),
	\end{equation}
	where the Caputo definition for the fractional derivative is a consequence 
	of the adopted free-energy. {The} mechanical dissipation 
	$\mathcal{D}_{mech}(\varepsilon):\mathbb{R}\to\mathbb{R}^+$ for the SB 
	element is given by the following form:
	\begin{equation}\label{eq:dissipation_SB}
	\mathcal{D}_{mech}(\varepsilon) = \int^\infty_0 \frac{\tilde{E}(z)}{z} 
	\left( \int^t_0 \exp\left(-\frac{t-s}{z}\right) \dot{\varepsilon}(s)\,ds 
	\right)^2 dz.
	\end{equation}
\end{lemma}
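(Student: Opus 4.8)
The plan is to follow a Coleman--Noll procedure adapted to the continuous Maxwell spectrum. First I would introduce, for each relaxation time $z>0$, the auxiliary branch strain
\begin{equation}
\varepsilon_z(t) := \int_0^t \exp\!\left(-\frac{t-s}{z}\right)\dot\varepsilon(s)\,ds,\nonumber
\end{equation}
so that the free-energy reads $\psi = \tfrac12\int_0^\infty \tilde E(z)\,\varepsilon_z(t)^2\,dz$. Differentiating $\varepsilon_z$ in time via the Leibniz rule gives the branch evolution law $\dot\varepsilon_z(t) = \dot\varepsilon(t) - z^{-1}\varepsilon_z(t)$, since the boundary term at $s=t$ contributes $\dot\varepsilon(t)$ while differentiating the exponential kernel produces the $-z^{-1}$ factor. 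Differentiating $\psi$ under the integral sign then yields
\begin{equation}
\dot\psi = \dot\varepsilon(t)\int_0^\infty \tilde E(z)\,\varepsilon_z(t)\,dz - \int_0^\infty \frac{\tilde E(z)}{z}\,\varepsilon_z(t)^2\,dz.\nonumber
\end{equation}

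Next I would substitute this into the Clausius--Duhem inequality \eqref{eq:CD-2}, which rearranges into
\begin{equation}
\Big[\tau(t) - \int_0^\infty \tilde E(z)\,\varepsilon_z(t)\,dz\Big]\dot\varepsilon(t) + \int_0^\infty \frac{\tilde E(z)}{z}\,\varepsilon_z(t)^2\,dz \ge 0.\nonumber
\end{equation}
Because this must hold for every admissible strain rate $\dot\varepsilon(t)$ of arbitrary sign, the bracketed coefficient must vanish; this Coleman--Noll argument delivers the constitutive law \eqref{eq:stress_strain_SB} and simultaneously identifies the remaining term as the mechanical dissipation $\mathcal{D}_{mech}$ in \eqref{eq:dissipation_SB}. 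Positivity of $\mathcal{D}_{mech}$ is then immediate, since $\tilde E(z)/z \ge 0$ for all $z>0$, confirming thermodynamic admissibility.

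Finally, to recover the Caputo form I would insert the power-law spectrum $\tilde E(z)$ into $\tau(t)=\int_0^\infty \tilde E(z)\,\varepsilon_z(t)\,dz$ and exchange the order of integration by Fubini, isolating the inner kernel integral $\int_0^\infty z^{-\beta-1}\exp(-(t-s)/z)\,dz$. The key computation is this integral: the substitution $u=(t-s)/z$ turns it into $(t-s)^{-\beta}\int_0^\infty u^{\beta-1}e^{-u}\,du = \Gamma(\beta)\,(t-s)^{-\beta}$. The factors of $\Gamma(\beta)$ cancel, leaving $\tau(t)=\frac{\mathbb{E}}{\Gamma(1-\beta)}\int_0^t (t-s)^{-\beta}\dot\varepsilon(s)\,ds = \mathbb{E}\,{}^C_0 \mathcal{D}^\beta_t \varepsilon(t)$. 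I expect the main obstacle to be rigor rather than mechanics: justifying the interchange of differentiation and integration, and the Fubini exchange, requires controlling the $z\to 0^+$ singularity of $\tilde E(z)$, so I would either assume $\dot\varepsilon$ bounded with $\varepsilon(0)=0$ (consistent with the homogeneous initial data adopted throughout) or invoke dominated convergence to legitimize each step.
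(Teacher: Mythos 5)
Your proposal is correct and follows essentially the same route as the paper's proof in Appendix~\ref{Ap:lemma_stress_strain}: Leibniz differentiation of the hereditary integral, the Coleman--Noll argument on the Clausius--Duhem inequality to extract the constitutive law and identify the residual as $\mathcal{D}_{mech}$, and evaluation of the kernel integral via $u=(t-s)/z$ so that $\Gamma(\beta)$ cancels and the Caputo form emerges. Your auxiliary branch-strain notation $\varepsilon_z$ and the closing remarks on justifying the interchanges of integration are tidy additions but do not change the argument.
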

{
	\begin{proof}
		See Appendix \ref{Ap:lemma_stress_strain}.
	\end{proof}
	\begin{remark}
		The limit cases for the fractional free-energy 
		(\ref{eq:ve_potential}) with respect to $\beta$ are consistent with the 
		well-known stress-strain relationship (\ref{eq:stress_strain_SB}). 
		Therefore, 
		$\psi(\varepsilon)$ recovers a fully conserving Hookean spring when 
		$\lim_{\beta \to 0} \psi = \mathbb{E} \varepsilon^2/2$, and a fully 
		dissipative Newtonian dashpot when $\lim_{\beta \to 1} \psi = 0$. We 
		refer the 
		readers to \cite{Lion1997, Deseri2014} for additional details regarding 
		memory-dependent free-energies.
\end{remark}}

\section{Fractional Visco-Elasto-Plastic Model with Damage}
\label{Sec:VEPD}

{We} develop a damage formulation for a fractional 
visco-elasto-plastic model (M1) by Suzuki \textit{et al.} \cite{Suzuki2016}.  
{The} closure for the damage variable is obtained through a 
Lemaitre-type approach \cite{Lemaitre1996,Lemaitre2005}. {We} 
{later} prove the thermodynamic consistency of the damage model, 
and {hence} for the visco-elasto-plastic model (M1) as a 
limiting, undamaged case.


\subsection{Thermodynamic Formulation}

The {fractional visco-elasto-plastic device} is 
illustrated in Figure \ref{fig:VEP_device}. It consists of a SB element with  
{material} pair $(\mathbb{E}, \beta_E)$ for the visco-elastic 
part, under 
a corresponding logarithmic visco-elastic strain 
$\varepsilon^{ve}(t):\mathbb{R}^+\to\mathbb{R}$. The visco-plastic part is 
given by a parallel combination of a Coulomb frictional element with yield 
stress $\tau^Y\,[Pa] \in \mathbb{R}^+$, a linear hardening Hooke element with 
constant $H\,[Pa] \in \mathbb{R}^+$, and a SB element with material pair 
$(\mathbb{K}, \beta_K)$, with $\mathbb{K}\, [Pa.s^{\beta_K}] \in \mathbb{R}^+$, 
all subject to a logarithmic visco-plastic strain 
$\varepsilon^{vp}(t):\mathbb{R}^+\to\mathbb{R}$ and an internal hardening 
variable $\alpha(t):\mathbb{R}^+\to\mathbb{R}^+$. The entire device is subject 
to a Kirchhoff stress $\tau$. The total logarithmic strain is 
{given by:}
\begin{equation}\label{eq:kinematics}
\varepsilon(t) = \varepsilon^{ve}(t) + \varepsilon^{vp}(t).
\end{equation}
\begin{figure}[t!]
	\centering
	\begin{subfigure}[b]{0.45\textwidth}
		\includegraphics[width=\columnwidth]{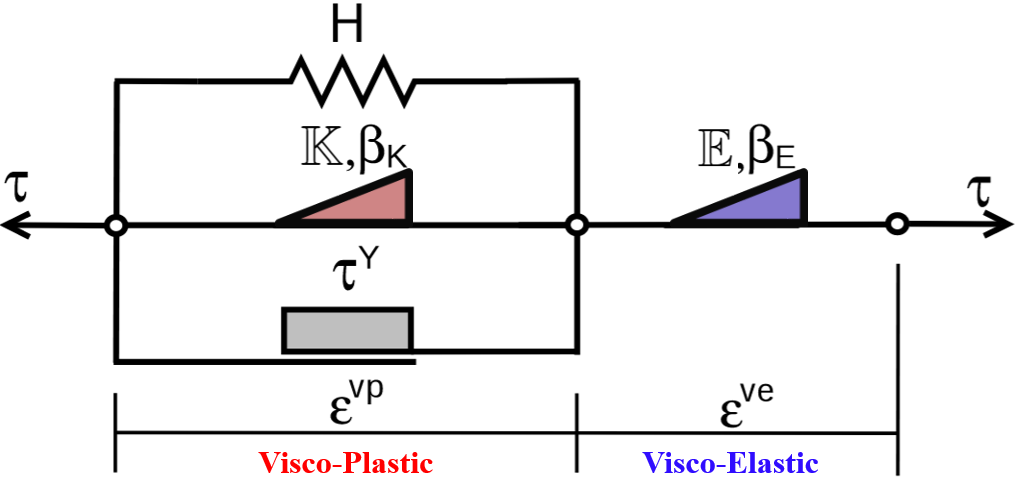}
		\caption{Rheological diagram.}		
	\end{subfigure}%
	\begin{subfigure}[b]{0.45\textwidth}
		\includegraphics[width=\columnwidth]{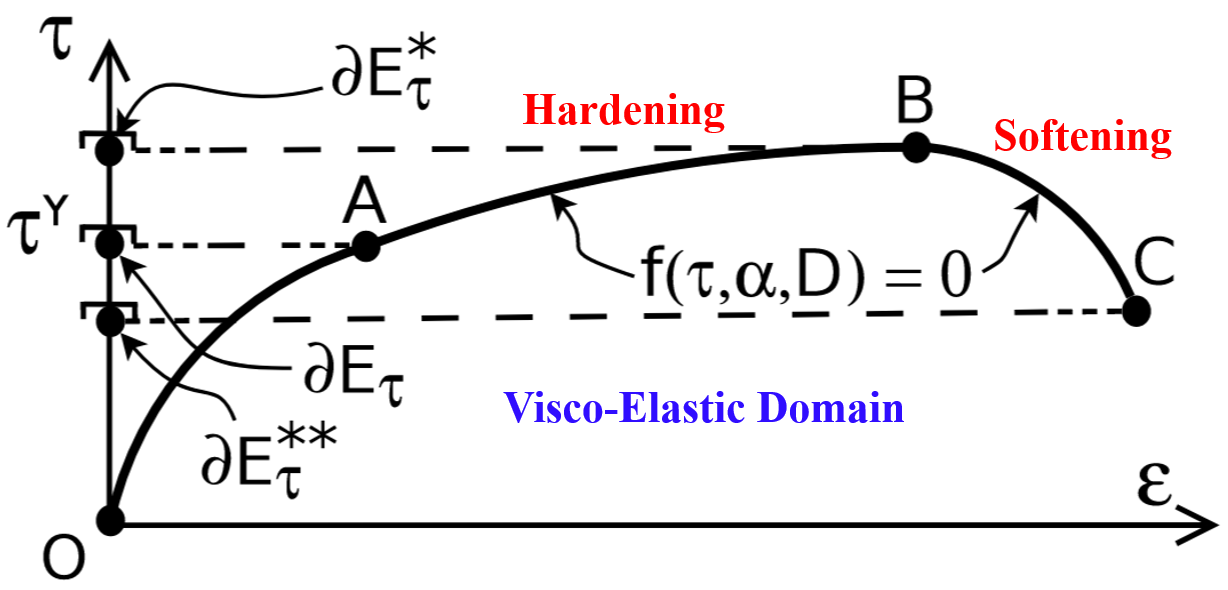}
		\caption{Stress \textit{vs.} strain response.}		
	\end{subfigure}%
	\caption{Damaged fractional visco-elasto-plastic model. (A) Constitutive 
		diagram with visco-elastic/plastic rheological elements. (B) 
		Stress response showing the yield surface expansion 
		(hardening) and contraction (softening).\label{fig:VEP_device}}
\end{figure}
Let $D(t):\mathbb{R}^+\to\Omega_D$, with $\Omega_D = [0,1)$ be a time-dependent 
and monotonically increasing internal damage variable representing the internal 
material degradation. {Our model has the following assumptions:}
\begin{assumption}
	{The visco-elastic response is linear, under an isothermal 
		strain-driven process.}	
\end{assumption}
\begin{assumption} \label{damage_driven} 
	There is a state coupling between the 
	visco-elastic {strains/har\-dening variable} 
	$\varepsilon^{ve}$, 
	$\alpha$, and damage $D$. However, the damage evolution is solely driven by 
	the visco-elastic free-energy potential.
\end{assumption}	
\begin{assumption}	\label{state} 
	{There} is no state coupling between visco-elasticity and 
	visco-plasticity.
\end{assumption}
\begin{assumption}
	The damage $D(t)$ and hardening $\alpha(t)$ are 
	{irreversible}, \textit{i.e.}, there is no material healing. 
	{Also, there are no crack closure effects.}
\end{assumption}
\begin{assumption}
	All state and internal variables are subject to homogeneous initial 
	conditions, \textit{e.g.}, $\varepsilon(0) = \varepsilon^{ve}(0) = 
	\varepsilon^{vp}(0) = \alpha(0) = D(0) = 0$.
\end{assumption}

Assumption (\ref{state}) implies {a} linearity between the 
visco-elastic and visco-plastic free-energy components, both 
{multiplicatively coupled with damage}.

\subsubsection{Free-Energy Densities}

{We} write the Helmholtz free-energy density 
$\psi(\varepsilon^{ve},\, \alpha,\,D):\mathbb{R} \times \mathbb{R}^+ \times 
\Omega_D \to \mathbb{R}^+$ for the model as:
\begin{equation}\label{eq:psi_total}
\psi(\varepsilon^{ve},\, \alpha,\,D) = \left(1 - D \right) \left( 
\bar{\psi}^{ve}(\varepsilon^{ve}) + \bar{\psi}^{vp}(\alpha) \right),
\end{equation}
where $\bar{\psi}^{ve}(\varepsilon^{ve}):\mathbb{R}\to\mathbb{R}^+$ and 
$\bar{\psi}^{vp}(\alpha):\mathbb{R}^+\to\mathbb{R}^+$ represent the undamaged 
visco-elastic and visco-plastic free-energy {densities.} 
{Utilizing} (\ref{eq:ve_potential}) for the {SB} 
elements and the {Hookean} spring, the 
{free-energy} density is given by:
\begin{align}\label{eq:vepd_potential}
	\psi(\varepsilon^{ve}, \alpha,\, D) & = \frac{1}{2} \left(1 - D\right) 
	\left[ 
	\int^\infty_0 \tilde{E}(z) \left( \int^t_0 \exp\left(-\frac{t-s}{z}\right) 
	\dot{\varepsilon}^{ve}(s)\,ds \right)^2\, dz \right. \\ \nonumber
	& \left. +  \int^\infty_0 \tilde{K}(z) \left( \int^t_0 
	\exp\left(-\frac{t-s}{z}\right) \dot{\alpha}(s)\,ds \right)^2\, dz + H 
	\alpha^2 
	\right],
\end{align}
with the following relaxation spectra for visco-elasticity and visco-plasticity:
\begin{equation*}
	\tilde{E}(z) = \frac{\mathbb{E}}{\Gamma(1-\beta_E) \Gamma(\beta_E) 
	z^{\beta_E + 
			1}}, \quad \tilde{K}(z) = \frac{\mathbb{K}}{\Gamma(1-\beta_K) 
			\Gamma(\beta_K) 
		z^{\beta_K + 1}},  
\end{equation*}
where $0 < \beta_E,\,\beta_K < 1$.

\begin{remark}[Recovery of classical free-energy potentials]
	{Similar} to the {SB} element case, we 
	recover the Hookean and Newtonian limit cases for the asymptotic values of 
	$\beta_E$, $\beta_K$. {Also, if} $D \to 0$, we recover an 
	undamaged case, and when $D \to 1$, we have $(1-D) \psi \to 0$ 
	{(material failure)}.
\end{remark}

\subsubsection{Constitutive Laws}
\label{Sec:Constitutive_laws}

{We use the Clausius-Duhem inequality 
	(\ref{eq:CD-2}) in the local form of classical thermodynamics of internal 
	variables, which induces \textit{near-equilibrium} states 
	for 
	every time $t$ of the thermodynamic process. However, the fractional 
	free-energy densities introduce memory effects and therefore 
	\textit{far-from-equili\-brium} states in the scope of rational 
	thermodynamics 
	\cite{Fabrizio2014}. Using} (\ref{eq:psi_total}) and 
(\ref{eq:kinematics}), inequality 
(\ref{eq:CD-2}) is given by:
\begin{equation}\label{eq:Clausius_Duhem}
-\rho \dot{\psi}(\varepsilon^{ve}, \alpha,D) + \tau 
\left(\dot{\varepsilon}^{ve} + \dot{\varepsilon}^{vp}\right) \ge 0,
\end{equation}
where we evaluate $\dot{\psi}$ as follows:
\begin{equation} \label{eq:psi_dot}
\dot{\psi}(\varepsilon^{ve}, \dot{\varepsilon}^{ve}, \alpha, \dot{\alpha}, D, 
\dot{D}) = \frac{\partial \psi}{\partial \varepsilon^{ve}} 
\dot{\varepsilon}^{ve} + \frac{\partial \psi}{\partial \alpha} \dot{\alpha} + 
\frac{\partial \psi}{\partial D} \dot{D}.
\end{equation}
Similar to the proof of Lemma \ref{lemma:stress_strain}, the 
partial derivatives are obtained {by} chain and Leibniz rules. 
For the first term 
on the RHS of 
(\ref{eq:psi_dot}), we have:
\begin{align*}
	\frac{\partial \psi}{\partial \varepsilon^{ve}} \dot{\varepsilon}^{ve} = & 
	\left(1 - D \right) \left[ \int^\infty_0 \tilde{E}(z) \left( \int^t_0 
	\exp\left(-\frac{t-s}{z}\right) \dot{\varepsilon}^{ve}(s)\,ds \right) dz \, 
	\dot{\varepsilon}^{ve} \right. \\
	& \left. - \int^\infty_0 \frac{\tilde{E}(z)}{z} \left( \int^t_0 
	\exp\left(-\frac{t-s}{z}\right) \dot{\varepsilon}^{ve}(s)\,ds \right)^2 dz 
	\right].
\end{align*}
Recalling (\ref{eq:dissipation_SB}), we rewrite the above equation as:
\begin{align}\label{eq:partial_ve}
	\frac{\partial \psi}{\partial \varepsilon^{ve}} \dot{\varepsilon}^{ve} = & 
	\left(1 - D \right) \left[ \int^\infty_0 \tilde{E}(z) \left( \int^t_0 
	\exp\left(-\frac{t-s}{z}\right) \dot{\varepsilon}^{ve}(s)\,ds \right) dz \, 
	\dot{\varepsilon}^{ve} - \mathcal{D}^{ve}_{mech}(\varepsilon^{ve}) \right],
\end{align}
where $\mathcal{D}^{ve}_{mech}(\varepsilon^{ve}):\mathbb{R}\to\mathbb{R}^+$ 
represents the visco-elastic mechanical energy dissipation, given 
by:
\begin{equation*}
	\mathcal{D}^{ve}_{mech}(\varepsilon^{ve}) = \int^\infty_0 
	\frac{\tilde{E}(z)}{z} \left( \int^t_0 \exp\left(-\frac{t-s}{z}\right) 
	\dot{\varepsilon}^{ve}(s)\,ds \right)^2 dz.
\end{equation*}
{Similarly}, we obtain the second term on the RHS of 
(\ref{eq:psi_dot}):
\begin{equation}\label{eq:partial_vp}
\frac{\partial \psi}{\partial \alpha} \dot{\alpha} = R(t) \dot{\alpha} 
- (1-D)\mathcal{D}^{vp}_{mech}(\alpha),
\end{equation}
where $R(t):\mathbb{R}^+\to\mathbb{R}^+$ represents the accumulated stress 
acting on the {SB} and Hooke elements on the visco-plastic part 
due to the accumulated visco-plastic strains. Recalling Lemma 
\ref{lemma:stress_strain}, $R(t)$ reads:
\begin{align*}
	R(t) & = \left(1-D\right) \left[\int^\infty_0 \tilde{K}(z) \left( \int^t_0 
	\exp\left(-\frac{t-s}{z}\right) \dot{\alpha}(s)\,ds \right) dz  + H 
	\alpha\right] \\
	& = \left(1-D\right)\left[\mathbb{K}\, {}^C_0 \mathcal{D}^{\beta_K}_t 
	\left(\alpha \right) + H\alpha\right].
\end{align*}
On the other hand, the term 
$\mathcal{D}^{vp}_{mech}(\alpha):\mathbb{R}^+\to\mathbb{R}^+$ denotes the 
visco-plastic mechanical energy dissipation in the model, which is given by:
\begin{equation*}
	\mathcal{D}^{vp}_{mech}(\alpha) = \int^\infty_0 \frac{\tilde{K}(z)}{z} 
	\left( 
	\int^t_0 \exp\left(-\frac{t-s}{z}\right) \dot{\alpha}(s)\,ds \right)^2 dz.
\end{equation*}
Finally, the direct calculation of the last term on the RHS of 
(\ref{eq:psi_dot}) yields:
\begin{equation}\label{eq:partial_D}
\frac{\partial \psi}{\partial D} \dot{D} = \left[Y^{ve}(\varepsilon^{ve}) + 
Y^{vp}(\alpha) \right] \dot{D} = Y(\varepsilon^{ve}, \alpha)\dot{D},
\end{equation}
where $Y^{ve}(\varepsilon^{ve}):\mathbb{R}\to\mathbb{R}^-$ and 
$Y^{vp}(\alpha):\mathbb{R}^+\to\mathbb{R}^-$ denote, respectively, the 
\textit{visco-elastic/plastic damage energy release rates}. 
{From} (\ref{eq:psi_total}), they are respectively given by:
\begin{equation}\label{eq:Damage_release_rate}
Y^{ve}(\varepsilon^{ve}) = -\bar{\psi}^{ve}(\varepsilon^{ve}) =  -\frac{1}{2} 
\int^\infty_0 \tilde{E}(z) \left( \int^t_0 \exp\left(-\frac{t-s}{z}\right) 
\dot{\varepsilon}^{ve}(s)\,ds \right)^2 dz.
\end{equation}
\begin{equation}\label{eq:Damage_release_rate_vp}
Y^{vp}(\alpha) = -\bar{\psi}^{vp}(\alpha) = - \frac{1}{2}\int^\infty_0 
\tilde{K}(z) \left( \int^t_0 \exp\left(-\frac{t-s}{z}\right) 
\dot{\alpha}(s)\,ds \right)^2 dz.
\end{equation}
We observe from the above result that, in principle, both visco-elastic and 
visco-plastic parts {release} bulk energy with respect to 
damage. Inserting (\ref{eq:partial_ve}), (\ref{eq:partial_vp}) and 
(\ref{eq:partial_D}) into (\ref{eq:Clausius_Duhem}), recalling Lemma 
\ref{lemma:stress_strain}, and dropping the function variables, we obtain:
\begin{align}\label{eq:CD-3}
	\left[ \tau - \left(1 - D \right) \mathbb{E}\, {}^C_0 
	\mathcal{D}^{\beta_E}_t 
	\left(\varepsilon^{ve}\right) \right] \dot{\varepsilon}^{ve} & + \tau 
	\dot{\varepsilon}^{vp} - R \dot{\alpha}  \\
	& - Y \dot{D} \nonumber + (1-D)\left(\mathcal{D}^{ve}_{mech} + 
	\mathcal{D}^{vp}_{mech}\right) \ge 0.
\end{align}
%
%
Since the strain rate $\dot{\varepsilon}^{ve}$ in (\ref{eq:CD-3}) is arbitrary, 
without {violating} the inequality, we can set its 
multiplying argument to zero, and obtain the following stress-strain 
relationship:
\begin{equation}\label{eq:stress_strain_1}
\tau(t) = \left(1 - D \right)\mathbb{E}\, {}^C_0 \mathcal{D}^{\beta_E}_t 
\left(\varepsilon^{ve}\right),
\end{equation}
and alternatively, {using} (\ref{eq:kinematics}), we obtain:
\begin{equation}\label{eq:stress_strain_2}
\tau(t) = \left(1 - D \right)\mathbb{E}\, {}^C_0 \mathcal{D}^{\beta_E}_t 
\left(\varepsilon - \varepsilon^{vp} \right),
\end{equation}
and hence, the total energy dissipation (\ref{eq:CD-3}) becomes:
\begin{equation}\label{eq:DMech}
\tau \dot{\varepsilon}^{vp} - R \dot{\alpha} - Y \dot{D} + 
(1-D)\left(\mathcal{D}^{ve}_{mech} + \mathcal{D}^{vp}_{mech}\right) \ge 0.
\end{equation}
{Hence, we obtained the stress-strain relationships and dissipation potentials.}

\subsubsection{Evolution Laws for Visco-Plasticity and Damage}
\label{Sec:VP_D}

In order to obtain the kinematic equations for the internal variables, we 
{define} a combined hardening and damage dissipation potential 
$F\left(\tau, \alpha, Y, D 
\right):\mathbb{R}\times\mathbb{R}^+\times\mathbb{R}^-\times\mathbb{R}^+ \to 
\mathbb{R}$, in the form \cite{Lemaitre1996, Lemaitre2005}:
\begin{equation}\label{eq:dissipation_potential}
F\left(\tau, \alpha, Y, D \right) := f\left( \tau, \alpha, D \right) + F_D 
\left( Y^{ve}, D \right),
\end{equation}
where $f\left( \tau, \alpha, D 
\right):\mathbb{R}\times\mathbb{R}^+\times\mathbb{R}^+\to\mathbb{R}^-\cup\lbrace
0\rbrace$ represents a yield function, defined here as the difference 
between the absolute value of the applied stress in the device and the stress 
acting on the visco-plastic part \cite{Suzuki2016}:
\begin{align}\label{eq:Yield_function}
	f\left(\tau, \alpha, D \right) & := \lvert \tau \rvert - \left[ (1-D)\tau^Y 
	+ R 
	\right] \nonumber \\
	& = \lvert \tau \rvert - \left(1-D\right)\left[ \tau^Y +  \mathbb{K}\, 
	{}^C_0 
	\mathcal{D}^{\beta_K}_t \left(\alpha \right) + H\alpha \right],
\end{align}
which softens the visco-plastic stresses. 
{
	\begin{lemma}
		The set of admissible stresses lies in a closed convex 
		space (\textit{see Fig.\ref{fig:VEP_device}}) with respect to the 
		associated thermodynamic variables $\tau$ and $R$ \cite{Lemaitre2005}, 
		given by:
		\begin{equation}\label{eq:admissible_stress}
		E_\tau = \lbrace \tau \in \mathbb{R} \vert f(\tau,\alpha,D) < 0 \rbrace.
		\end{equation}
		The boundary of $E_\tau$, denoted by $\partial E_\tau$, is the 
		convex set given by:
		\begin{equation*}
			\partial E_\tau = \lbrace \tau \in \mathbb{R} \vert 
			f(\tau,\alpha,D) = 0 
			\rbrace,
		\end{equation*}
		where $f(\tau,\alpha,D) = 0$ denotes the \textit{yield condition} in 
		classical plasticity.
	\end{lemma}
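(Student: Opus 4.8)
The plan is to treat the yield function $f$ as a function of the generalized (thermodynamic) stress pair $(\tau, R)$, holding the internal variables $\alpha$ and $D$ fixed at their current values in the process. With $D$ frozen, the term $(1-D)\tau^Y$ is a nonnegative constant, and from (\ref{eq:Yield_function}) we may write $f(\tau, R) = |\tau| - (1-D)\tau^Y - R$. First I would establish that this $f$ is jointly convex in $(\tau, R)$: the map $\tau \mapsto |\tau|$ is convex on $\mathbb{R}$ (it is the absolute value, i.e. the one-dimensional norm), the map $R \mapsto -R$ is affine and hence convex, and the remaining term $-(1-D)\tau^Y$ is a constant. Since a sum of convex functions is convex, $f$ is convex on $\mathbb{R} \times \mathbb{R}^+$.

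Next I would invoke the standard fact from convex analysis that every sublevel set of a convex function is convex. Applying this to $f$ shows that the admissible set $\lbrace (\tau, R) : f \le 0 \rbrace$ is convex, and that the strict sublevel set $E_\tau = \lbrace f < 0 \rbrace$ is likewise convex. Closedness of $\lbrace f \le 0 \rbrace$ follows from the continuity of $f$ (it is the preimage of the closed half-line $(-\infty, 0]$), while $E_\tau = \lbrace f < 0 \rbrace$ is open for the same reason. Thus the admissible stresses lie in the closed convex set $\overline{E}_\tau = \lbrace f \le 0 \rbrace$, and $E_\tau$ is its open interior, namely the elastic domain.

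To identify the boundary, I would use continuity of $f$ together with the fact that $\overline{E}_\tau$ has nonempty interior, which yields $\partial \overline{E}_\tau = \lbrace f = 0 \rbrace$. In the present one-dimensional setting this can be made fully explicit: setting $c := (1-D)\tau^Y + R \ge 0$, one has $E_\tau = (-c, c)$, $\overline{E}_\tau = [-c, c]$, and $\partial E_\tau = \lbrace \pm c \rbrace = \lbrace f = 0 \rbrace$, which is precisely the yield condition $|\tau| = (1-D)\tau^Y + R$.

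The main point requiring care is interpretive rather than technical: the set $E_\tau$ written with the strict inequality $f < 0$ is the open elastic domain, so it is its closure $\overline{E}_\tau = \lbrace f \le 0 \rbrace$ that is the closed convex set of admissible stresses, and the assertion that $\partial E_\tau = \lbrace f = 0 \rbrace$ is ``convex'' should be read as identifying the boundary of the convex set rather than claiming that the two-point boundary is itself convex. The only genuinely structural ingredient is the convexity of $|\tau|$; the hardening stress $R$ enters affinely and therefore cannot destroy convexity, which is why the memory-dependent (fractional) hardening through ${}^C_0\mathcal{D}^{\beta_K}_t(\alpha)$ leaves the geometry of the admissible set intact.
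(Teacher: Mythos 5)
Your proposal is correct and follows essentially the same route as the paper's Appendix~C: both arguments fix $D$, view $f$ as a function of the pair $(\tau,R)$, and reduce everything to the convexity of $\lvert\cdot\rvert$ via the triangle (Jensen) inequality, the hardening stress $R$ entering only affinely. The paper stops after verifying the convexity inequality for $f$ directly, whereas you additionally spell out the sublevel-set argument, the closedness/openness distinction, and the explicit identification $\partial E_\tau = \lbrace \pm c\rbrace$ with $c=(1-D)\tau^Y+R$ --- all correct and a slightly more complete account of what the lemma actually asserts.
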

	\begin{proof}
		See Appendix \ref{Ap:Conxevity}.
\end{proof}}

The term $F_D \left( Y^{ve}, D 
\right):\mathbb{R}^-\times\mathbb{R}^+\to\mathbb{R}^+$ represents a damage 
potential driven by the plastic strains and visco-elastic free-energy 
(\textit{see Assumption \ref{damage_driven}}), {where we adopt 
	Lemaitre's form 
	for ductile materials \cite{Lemaitre2005}:}
\begin{equation}\label{eq:damage_potential}
F_D(Y,D) := \frac{S}{(s+1)(1-D)}\left(- \frac{Y^{ve}}{S} \right)^{s+1},
\end{equation}
{where $S \in \mathbb{R}^+$ $[Pa]$ and $s \in \mathbb{R}^+$ 
	represent 
	material parameters,  identified, \textit{e.g.}, by Cao \textit{et al.} 
	\cite{Cao2013} for a Zirconium alloy, and 
	by Bouchard \textit{et al.} \cite{Bouchard2011} for highly ductile metals. 
	In 
	the latter, an inverse power-law form for $F_D$ was defined with respect to 
	the 
	equivalent plastic strains to avoid damage over-estimation. The} 
sensitivity of {Lemaitre's} model with 
respect to {$S$ and $s$} was studied by Roux and Bouchard 
\cite{Roux2015}. 

{From the defined yield function} (\ref{eq:Yield_function}), and 
the 
principle of maximum plastic dissipation \cite{Simo1998}, the following 
{properties hold:} \textbf{i)} associativity of the flow rule, 
\textbf{ii)} associativity in the 
hardening law, \textbf{iii)} Kuhn-Tucker complimentary conditions, and 
\textbf{iv)} convexity of $E_\tau$. Therefore, we obtain a set of evolution 
equations for $\varepsilon^{vp}$, $\alpha$ and $D$:
\begin{equation*}
	\dot{\varepsilon}^{vp} = \frac{\partial f}{\partial \tau} \dot{\gamma}, 
	\quad 
	\dot{\alpha} = -\frac{\partial f}{\partial R} \dot{\gamma}, \quad \dot{D} = 
	-\frac{\partial F_D}{\partial Y^{ve}} \dot{\gamma},
\end{equation*}
where $\dot{\gamma}(t):\mathbb{R}^+\to\mathbb{R}^+$ denotes the plastic slip 
rate. {For simplicity, we consider only variations of the 
	potential $F_D$ with respect to the free-energy from the visco-elastic 
	component for the damage evolution}. Evaluating the 
above equations using (\ref{eq:Yield_function}) and 
(\ref{eq:damage_potential}), we obtain, respectively, the evolution for 
visco-plastic strains, hardening variable, and damage:
\begin{equation}\label{eq:evol_vp_damage}
\dot{\varepsilon}^{vp} = \mathrm{sign}(\tau) \dot{\gamma},
\end{equation}
\begin{equation}\label{eq:evol_hardening_damage}
\dot{\alpha} = \dot{\gamma},
\end{equation}
\begin{equation}\label{eq:evol_damage}
\dot{D} = \frac{\dot{\gamma}}{(1-D)}\left(- \frac{Y^{ve}}{S} \right)^s,
\end{equation}
where the first two evolution laws coincide with the ones defined for the model 
M1 by Suzuki \textit{et al.} \cite{Suzuki2016} for fractional 
visco-elasto-plasticity. 
{
	\begin{remark}
		The obtained nonlinear damage evolution (\ref{eq:evol_damage})
		coincides with local Lemaitre's form \cite{Lemaitre1996, Lemaitre2005}. 
		However, due to the time-fractional form of $Y^{ve}$, power-law memory 
		effects 
		for damage are introduced in the model.
\end{remark}}

\begin{theorem}[Positive dissipation]\label{thm:positive_D}
	The mechanical dissipation for the damaged, fractional 
	visco-elasto-plastic model is positive and given by,
	\begin{equation*}
		\left(1-D(t)\right) 
		\left[\tau^Y \dot{\gamma}(t) + 
		\mathcal{D}^{ve}_{mech}(\varepsilon^{ve}) + 
		\mathcal{D}^{vp}_{mech}(\alpha)\right] - Y(\varepsilon^{ve},\alpha) 
		\dot{D}(t) \ge \, 0,
	\end{equation*}
	where the above Clausius-Duhem inequality holds. Therefore, the defined 
	Helmholtz free-energy density (\ref{eq:vepd_potential}), the obtained 
	stress-strain relationship (\ref{eq:stress_strain_2}) and evolution 
	equations (\ref{eq:evol_vp_damage})-(\ref{eq:evol_damage}) of the developed 
	model are thermodynamically {admissible}.
\end{theorem}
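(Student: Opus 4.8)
The plan is to start from the reduced mechanical dissipation expression on the left-hand side of (\ref{eq:DMech})---already obtained by extracting the stress-strain law (\ref{eq:stress_strain_2}) from the arbitrariness of $\dot{\varepsilon}^{ve}$---and to show that substituting the associative evolution laws turns it into precisely the quantity in the statement, which I then certify to be non-negative term by term. First I would insert the flow rule (\ref{eq:evol_vp_damage}) and the hardening law (\ref{eq:evol_hardening_damage}). Because $\tau\,\mathrm{sign}(\tau) = \lvert\tau\rvert$, the plastic power $\tau\dot{\varepsilon}^{vp}$ collapses to $\lvert\tau\rvert\dot{\gamma}$ while $R\dot{\alpha}$ becomes $R\dot{\gamma}$, so the first three terms of (\ref{eq:DMech}) read $(\lvert\tau\rvert - R)\dot{\gamma} - Y\dot{D}$.

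The key algebraic step is to replace $(\lvert\tau\rvert - R)\dot{\gamma}$ by $(1-D)\tau^Y\dot{\gamma}$. I would justify this through the Kuhn-Tucker loading/unloading conditions together with the yield function (\ref{eq:Yield_function}): the complementarity condition $f\dot{\gamma}=0$ guarantees that whenever $\dot{\gamma}>0$ the yield condition $f=0$ is active, i.e. $\lvert\tau\rvert = (1-D)\tau^Y + R$ and hence $\lvert\tau\rvert - R = (1-D)\tau^Y$, while whenever $\dot{\gamma}=0$ both sides are weighted by zero. Thus $(\lvert\tau\rvert - R)\dot{\gamma} = (1-D)\tau^Y\dot{\gamma}$ holds in both regimes, and the dissipation becomes exactly
\[
(1-D)\left[\tau^Y\dot{\gamma} + \mathcal{D}^{ve}_{mech} + \mathcal{D}^{vp}_{mech}\right] - Y\dot{D},
\]
matching the statement.

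It then remains to verify that this quantity is non-negative, which is the actual content of ``positive dissipation'' and hence of thermodynamic admissibility. Since $D(t)\in\Omega_D=[0,1)$ we have $(1-D)>0$; the yield stress satisfies $\tau^Y>0$ and the slip rate $\dot{\gamma}\ge 0$; and both $\mathcal{D}^{ve}_{mech}$ and $\mathcal{D}^{vp}_{mech}$ are non-negative, being integrals over $z\in(0,\infty)$ of the positive spectra $\tilde{E}(z)/z$, $\tilde{K}(z)/z$ multiplied by squared convolution integrals, as in (\ref{eq:dissipation_SB}). For the last term I would note that $Y = Y^{ve}+Y^{vp}\le 0$, since (\ref{eq:Damage_release_rate}) and (\ref{eq:Damage_release_rate_vp}) are negatives of non-negative free-energies, whereas $\dot{D}\ge 0$ follows directly from (\ref{eq:evol_damage}) because $\dot{\gamma}\ge 0$, $(1-D)>0$, and $(-Y^{ve}/S)^s\ge 0$ (as $-Y^{ve}\ge 0$ and $S>0$). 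Therefore $-Y\dot{D}\ge 0$, and the whole left-hand side is a sum of non-negative terms.

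The computation is essentially mechanical, so I expect the only genuine obstacle to be the regime-splitting in the second paragraph: the substitution $\lvert\tau\rvert - R = (1-D)\tau^Y$ must be justified by complementarity rather than assumed to hold pointwise, since during purely elastic response the yield function is strictly negative and the identity is recovered only because it is multiplied by $\dot{\gamma}=0$. Once the Kuhn-Tucker structure is invoked correctly, thermodynamic admissibility of the free-energy (\ref{eq:vepd_potential}), the stress-strain law (\ref{eq:stress_strain_2}), and the evolution equations (\ref{eq:evol_vp_damage})--(\ref{eq:evol_damage}) follows immediately from the sign bookkeeping.
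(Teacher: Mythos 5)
Your proposal is correct and follows essentially the same route as the paper's proof in Appendix B: substitute the flow and hardening rules into the reduced dissipation (\ref{eq:DMech}), eliminate $\bigl(\lvert\tau\rvert - (1-D)\tau^Y - R\bigr)\dot{\gamma}$ via the complementarity/persistency condition $f\dot{\gamma}=0$, and verify non-negativity term by term. Your explicit regime-splitting between $\dot{\gamma}>0$ (yield active) and $\dot{\gamma}=0$ (elastic) is a slightly more careful statement of the same step the paper handles by invoking the persistency condition.
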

{
	\begin{proof}
		See Appendix \ref{Ap:positive_D}.
\end{proof}}

\subsection{Time-Fractional Integration}

We develop two new algorithms for time-fractional integration 
of the developed {model. The first one is a 
	semi-implicit \textit{fractional return-mapping algorithm}}, 
that can be implemented in zero- or one- dimensional {systems} 
as a constitutive box. {The second one is an FD discretization 
	for the fractional Helmholtz free-energy density and} damage energy release 
rate $Y$ (\ref{eq:Damage_release_rate}). Let $t \in (0, T]$, 
{and an uniform time grid given by $t_n = n \Delta t$, with $n = 
	0,\,1,\,\dots,\,N$ and time-step size $\Delta t = T/N$.}

\subsubsection{Semi-Implicit Fractional Return-Mapping Algorithm}

We employ a backward-Euler scheme {considering} all variables to be 
implicit, except the damage $D$ in the stress-strain relationship and yield 
function. We refer the readers to \cite{Bouchard2011} for a comparison between 
implicit/semi-implicit integer-order return-mapping algorithms. Such explicit 
treatment of $D$ 
{weakly couples} the damage and plastic slip, simplifying the 
visco-plastic time-integration. Given known total strains $\varepsilon_n$ at 
time $t_n$, and a strain increment $\Delta \varepsilon_{n+1}$ we 
{have $ 
	\varepsilon_{n+1} = \varepsilon_{n} + \Delta \varepsilon_{n+1}$}. The 
	discrete 
form of the stress-strain relationship (\ref{eq:stress_strain_2}) reads:
\begin{equation}\label{eq:discretized_stress}
\tau_{n+1} = (1-D_n)\mathbb{E}\, {}^C_0 \mathcal{D}^{\beta_E}_t 
\left(\varepsilon - \varepsilon^{vp} \right)\big|_{t=t_{n+1}}.
\end{equation}
The backward-Euler discretization of the flow rule (\ref{eq:evol_vp_damage}) 
yields:
\begin{equation}\label{eq:discretized_vp}
\varepsilon^{vp}_{n+1} = \varepsilon^{vp}_{n} + \sign(\tau_{n+1})\Delta\gamma,
\end{equation}
with $\Delta\gamma = \gamma_{n+1} - \gamma_n$ representing the plastic slip 
increment in the interval $[t_n,\,t_{n+1}]$. Similarly, the discretization of 
the hardening law (\ref{eq:evol_hardening_damage})  and the damage evolution 
(\ref{eq:evol_damage}) are given,  respectively, by
\begin{equation}\label{eq:discretized_alpha}
\alpha_{n+1} = \alpha_n + \Delta\gamma,
\end{equation}
\begin{equation}\label{eq:discretized_damage}
D_{n+1} = D_n + 
\frac{\Delta\gamma}{1-D_{n+1}}\left(\frac{Y^{ve}_{n+1}}{S}\right)^s,
\end{equation}
with the following discrete form for the damage energy release rate 
(\ref{eq:Damage_release_rate}):
\begin{equation*}
	Y^{ve}_{n+1} = -\bar{\psi}^{ve}_{n+1} = - \frac{1}{2}\int^\infty_0 
	\tilde{E}(z) 
	\left( \int^{t_{n+1}}_0 \exp\left(-\frac{t_{n+1}-s}{z}\right) 
	\dot{\varepsilon}^{ve}(s)\,ds \right)^2 dz.
\end{equation*}
Similarly, the yield function (\ref{eq:Yield_function}) evaluated at $t_{n+1}$ 
is given by:
\begin{equation}\label{eq:discrete_f}
f_{n+1} = \lvert\tau_{n+1}\rvert - \left(1-D_n\right)\left[\tau^Y + 
\mathbb{K}\, {}^C_0 \mathcal{D}^{\beta_K}_t (\alpha)\big|_{t=t_{n+1}} + 
H\alpha_{n+1}\right].
\end{equation}
%

{We utilize trial states, were we freeze the internal variables 
	(except for damage) for the prediction step at $t_{n+1}$.} Therefore, we 
	have:
\begin{equation*}
	\varepsilon^{vp^{trial}}_{n+1} = \varepsilon^{vp}_n, \quad 
	\alpha^{trial}_{n+1} 
	= \alpha_n.
\end{equation*}
{The trial visco-elastic stress and yield function are 
	given, respectively, by}
\begin{equation}\label{eq:trial_stress}
\tau^{trial}_{n+1} = (1-D_n)\mathbb{E}\, {}^C_0 \mathcal{D}^{\beta_E}_t 
(\varepsilon - \varepsilon^{vp^{trial}} )\big|_{t=t_{n+1}},
\end{equation}
\begin{equation*}
	f^{trial}_{n+1} = \lvert\tau^{trial}_{n+1}\rvert - 
	\left(1-D_n\right)\left[\tau^Y + \mathbb{K}\, {}^C_0 
	\mathcal{D}^{\beta_K}_t 
	(\alpha^{trial})\big|_{t=t_{n+1}} + H\alpha^{trial}\right].
\end{equation*}
Substituting (\ref{eq:discretized_vp}) into (\ref{eq:discretized_stress}) and 
recalling (\ref{eq:trial_stress}), we obtain:
\begin{equation*}
	\tau_{n+1} = \tau^{trial}_{n+1} - \sign(\tau_{n+1})(1-D_n)\mathbb{E}\, 
	{}^C_0 
	\mathcal{D}^{\beta_E}_t \left(\Delta\gamma \right)\big|_{t=t_{n+1}},
\end{equation*}
where we observe that
\begin{equation*}
	\left[\lvert\tau_{n+1}\rvert + (1-D_n)\mathbb{E}\, {}^C_0 
	\mathcal{D}^{\beta_E}_t \left(\Delta\gamma 
	\right)\big|_{t=t_{n+1}}\right]\sign(\tau_{n+1}) = 
	\lvert\tau^{trial}_{n+1}\rvert\sign(\tau^{trial}_{n+1}).
\end{equation*}
{Since the argument inside brackets on the LHS above is 
	positive, we note} that $\sign(\tau_{n+1}) = \sign(\tau^{trial}_{n+1})$. 
Hence, we have the updated stress:
\begin{equation}\label{eq:tau_tautrial}
\tau_{n+1} = \tau^{trial}_{n+1} - \sign(\tau^{trial}_{n+1})(1-D_n)\mathbb{E}\, 
{}^C_0 \mathcal{D}^{\beta_E}_t \left(\Delta\gamma \right)\big|_{t=t_{n+1}}.
\end{equation}
Our last step is to derive the closure to for the plastic slip $\Delta\gamma$. 
Substituting (\ref{eq:tau_tautrial}) and (\ref{eq:discretized_alpha}) into 
(\ref{eq:discrete_f}), we obtain:
{\small \begin{equation*}
		f_{n+1} = f^{trial}_{n+1} - (1-D_n)\left[\mathbb{E}\, {}^C_0 
		\mathcal{D}^{\beta_E}_t \left(\Delta\gamma \right)\big|_{t=t_{n+1}} - 
		\mathbb{K}\, {}^C_0 \mathcal{D}^{\beta_K}_t \left(\Delta\gamma 
		\right)\big|_{t=t_{n+1}} - H\Delta\gamma\right].
\end{equation*} }
Finally, setting the discrete yield condition $f_{n+1} = 0$, we obtain the 
following multi-term fractional differential equation for the plastic slip:
\begin{equation}\label{eq:multi_term_plastic_slip}
\boxed{\mathbb{E}\, {}^C_0 \mathcal{D}^{\beta_E}_t \left(\Delta\gamma 
	\right)\big|_{t=t_{n+1}} + \mathbb{K}\, {}^C_0 \mathcal{D}^{\beta_K}_t 
	\left(\Delta\gamma \right)\big|_{t=t_{n+1}} + H\Delta\gamma = 
	\frac{f^{trial}_{n+1}}{(1-D_n)}.}
\end{equation}
After solving (\ref{eq:multi_term_plastic_slip}) for 
{$\Delta\gamma$, we directly update the internal variables 
	$\alpha_{n+1}$ and $\varepsilon^{vp}_{n+1}$. The damage update is done 
	through Newton iteration. Let $P^{k}_{n+1}$ given at a sub-iteration $k$:}
\begin{equation*}
	P^{k}_{n+1} = D^k_{n+1} - D_n - 
	\frac{\Delta\gamma}{1-D^k_{n+1}}\left(\frac{Y^{ve}_{n+1}}{S}\right)^s,
\end{equation*}
with the following derivative, obtained analytically:
\begin{equation*}
	\frac{dP}{dD^k}\big|_{t=t_{n+1}} = 1 + 
	\frac{\Delta\gamma}{(1-D^k_{n+1})^2}\left(\frac{Y^{ve}_{n+1}}{S}\right)^s.
\end{equation*}
Therefore, the new iterated damage is given by:
\begin{equation*}
	D^{k+1}_{n+1} = D^k_{n+1} - 
	\frac{P^{k}_{n+1}}{\left(dP/dD^k\right)\big|_{t=t_{n+1}}}.
\end{equation*}

The developed fractional return-mapping algorithm is 
{summarized} in Algorithm \ref{alg:return-mapping}. 

\begin{algorithm}[t!]
	\caption{Fractional return-mapping algorithm.}
	\label{alg:return-mapping}
	\begin{algorithmic}[1]
		\STATE{Database for $\varepsilon$, $\varepsilon^{vp}$, $\alpha$, 
			$\Delta\gamma$, $D_n$ and total strain $\varepsilon_{n+1}$.}
		\STATE{$\varepsilon^{vp^{trial}}_{n+1} = \varepsilon^{vp}_n, \quad 
			\alpha^{trial}_{n+1} = \alpha_n$}
		\STATE{$\tau^{trial}_{n+1} = (1-D_n)\mathbb{E}\, {}^C_0 
			\mathcal{D}^{\beta_E}_t (\varepsilon - \varepsilon^{vp^{trial}} 
			)\big|_{t=t_{n+1}}$}
		\STATE{$f^{trial}_{n+1} = \lvert\tau^{trial}_{n+1}\rvert - 
			\left(1-D_n\right)\left[\tau^Y + \mathbb{K}\, {}^C_0 
			\mathcal{D}^{\beta_K}_t (\alpha^{trial})\big|_{t=t_{n+1}} + 
			H\alpha^{trial}\right]$}
		\IF{$f^{trial}_{n+1} \le 0$}
		\STATE{$\varepsilon^{vp}_{n+1} = \varepsilon^{vp}_n$,\quad  
			$\alpha_{n+1} = \alpha_n$,\quad $D_{n+1} = D_n$,\quad $\tau_{n+1} = 
			\tau^{trial}_{n+1}$.}
		\ELSE
		\STATE{Solve for $\Delta\gamma$:}
		\STATE{$\mathbb{E}\, {}^C_0 \mathcal{D}^{\beta_E}_t \left(\Delta\gamma 
			\right)\big|_{t=t_{n+1}} + \mathbb{K}\, {}^C_0 
			\mathcal{D}^{\beta_K}_t 
			\left(\Delta\gamma \right)\big|_{t=t_{n+1}} + H\Delta\gamma = 
			f^{trial}_{n+1}/(1-D_n)$}
		\STATE{$\tau_{n+1} = \tau^{trial}_{n+1} - 
			\sign(\tau^{trial}_{n+1})(1-D_n)\mathbb{E}\, {}^C_0 
			\mathcal{D}^{\beta_E}_t \left(\Delta\gamma 
			\right)\big|_{t=t_{n+1}}$}
		\STATE{$\varepsilon^{vp}_{n+1} = \varepsilon^{vp}_{n} + 
			\sign(\tau_{n+1})\Delta\gamma$}
		\STATE{$\alpha_{n+1} = \alpha_n + \Delta\gamma$}
		\STATE{{\small$Y^{ve}_{n+1} = - \frac{1}{2}\int^\infty_0 \tilde{E}(z) 
				\left( 
				\int^{t_{n+1}}_0 \exp\left(-\frac{t_{n+1}-s}{z}\right) 
				\dot{\varepsilon}^{ve}(s)\,ds \right)^2 dz$} (Algorithm 
			\ref{alg:1}).}
		\WHILE{$\vert P^{k}_{n+1}\vert > \epsilon$}
		\STATE{$P^{k}_{n+1} = D^k_{n+1} - D_n - 
			\frac{\Delta\gamma}{1-D^k_{n+1}}\left(\frac{Y^{ve}_{n+1}}{S}\right)^s$}
		\STATE{$\left(dP/dD^k\right)\big|_{t=t_{n+1}} = 1 + 
			\frac{\Delta\gamma}{(1-D^k_{n+1})^2}\left(\frac{Y^{ve}_{n+1}}{S}\right)^s$}
		\STATE{$D^{k+1}_{n+1} = D^k_{n+1} - 
			\frac{P^{k}_{n+1}}{\left(dP/dD^k\right)\big|_{t=t_{n+1}}}$}
		\ENDWHILE		
		\ENDIF
		
	\end{algorithmic}
\end{algorithm}

\subsubsection{Numerical Discretization of Fractional Operators}
\label{Sec:FD}

The fractional derivatives in the 
fractional return-mapping Algorithm \ref{alg:return-mapping} are evaluated 
implicitly using the L1 FD method \cite{Lin2007}. 
{Let $u(t):\mathbb{R}^+\to\mathbb{R}$. The time-fractional 
	Caputo derivative of order $0 < \beta < 1$ is discretized as:}
\begin{equation}
\label{eq:FDM}
{}^C_0{\mathcal{D}}_t^\beta u(t) \Big|_{t=t_{n+1}} = 
\frac{1}{\Gamma(2-\beta)} \sum_{j=0}^{n} d_j \frac{u_{n+1-j}-u_{n-j} 
}{\Delta t^\beta} + r^{n+1}_{\Delta t},
\end{equation}
where $r^{n+1}_{\Delta t} \le C_u \Delta t^{2-\beta}$ and $d_j := 
(j+1)^{1-\beta} 
- j^{1-\beta}, j=0,1,\dots,n$. The above expression can be rewritten and 
approximated as:
\begin{equation*}
	{}^C_0{\mathcal{D}}_t^\beta   u(t)\Big|_{t=t_{n+1}} \approx 
	\frac{1}{\Delta t^\beta \Gamma(2-\beta)} \left[ u_{n+1} - u_n + 
	\mathcal{H}^{\beta}u \right],
\end{equation*}
where the so-called \textit{history term} $\mathcal{H}^{\beta}u$ is given by:
\begin{equation} \label{Eq:History}
\mathcal{H}^{\beta}u = \sum_{j=1}^{n} d_j \left[ u_{n+1-j}-u_{n-j} \right].
\end{equation}
{Using (\ref{eq:FDM}) does not cause any loss of accuracy for 
	the return-mapping, since the backward-Euler approach for internal 
	variables is 
	first-order accurate. For trial state variables $u^{trial}_{n+1} = u_n$, 
	the discretized Caputo fractional derivatives are given by:}
\begin{equation} \label{Eq:Caputo_trial}
{}^C_0{\mathcal{D}}_t^\beta  u^{trial}(t)\Big|_{t=t_{n+1}} 
\approx \frac{\mathcal{H}^{\beta}u}{\Delta t^\beta \Gamma(2-\beta)}.
\end{equation}

\textbf{Free-Energy/Damage Energy Release Rate:} {We now 
	discretize the 
	visco-elastic damage energy release rate $Y^{ve} = 
	-\bar{\psi}^{ve}$. We first rewrite (\ref{eq:ve_potential}) as 
	\cite{Lion1997}:}
\begin{equation}\label{eq:free_energy_double}
\psi(\varepsilon) = \frac{\mathbb{E}}{2 \Gamma(1-\beta)} \int^t_0 \int^t_0 
\frac{\dot{\varepsilon}(s_1)\dot{\varepsilon}(s_2)}{\left( 2t -s_1 - s_2 
	\right)^\beta}\,ds_1\,ds_2.
\end{equation}
{We then decompose} the integral signs of 
(\ref{eq:free_energy_double}) 
into a discrete summation of $n$ integrals and approximate 
$\dot{\varepsilon}(t)$ using a backward-Euler scheme to obtain,
\begin{align}\label{eq:L1_potential_1}
	& \psi(\varepsilon_{n+1}) = \frac{\mathbb{E}}{2 \Gamma(1-\beta)} 
	\int^{t_{n+1}}_0 \int^{t_{n+1}}_0 
	\frac{\dot{\varepsilon}(s_1)\dot{\varepsilon}(s_2)}{\left( 2t_{n+1} -s_1 - 
	s_2 
		\right)^\beta}\,ds_1\,ds_2 \nonumber \\
	& = \frac{\mathbb{E}}{2 \Gamma(1-\beta)} \sum^n_{i=0} \sum^n_{j=0} 
	\int^{t_{i+1}}_{t_i} \int^{t_{j+1}}_{t_j} \frac{\Delta\varepsilon_{i+1} 
		\Delta\varepsilon_{j+1} }{\Delta t^2 \left( 2 
		t_{n+1} - s_1 - s_2 \right)^\beta}\,ds_1\,ds_2 + 
		\tilde{r}^{n+1}_{\Delta t},
\end{align}
with $\Delta \varepsilon_{k+1} = \varepsilon_{k+1} - \varepsilon_k$
{
	\begin{theorem}
		The local truncation error $\tilde{r}^{n+1}_{\Delta t}$ for 
		(\ref{eq:L1_potential_1}) satisfies
		\begin{equation} \label{eq:r_bound}
		\tilde{r}^{n+1}_{\Delta t} \le C \Delta t^{2-\beta},
		\end{equation}
		where $C$ denotes a constant depending only on the strain 
		$\varepsilon(t)$. 
	\end{theorem}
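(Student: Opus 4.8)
The plan is to bound the quadrature error by working in the relaxation-spectrum representation (\ref{eq:ve_potential}) rather than directly in the double integral (\ref{eq:free_energy_double}), since the former factorizes the memory kernel through the auxiliary modes
$$ I_z(t_{n+1}) = \int_0^{t_{n+1}} \exp\!\left(-\frac{t_{n+1}-s}{z}\right)\dot{\varepsilon}(s)\,ds, $$
and the scheme (\ref{eq:L1_potential_1}) is nothing but $\frac{1}{2}\int_0^\infty \tilde{E}(z)\,\widetilde{I}_z(t_{n+1})^2\,dz$, where $\widetilde{I}_z$ is obtained by replacing $\dot\varepsilon$ with its backward-Euler (piecewise-constant) values $\Delta\varepsilon_{k+1}/\Delta t$. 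First I would verify this identity: substituting the piecewise-constant $\dot\varepsilon$ into both equivalent exact forms and carrying out the same Fubini exchange and Gamma-integral used to pass from (\ref{eq:ve_potential}) to (\ref{eq:free_energy_double}) shows that the discrete double sum in (\ref{eq:L1_potential_1}) equals $\frac12\int_0^\infty\tilde E(z)\widetilde I_z^2\,dz$ exactly, so that $\tilde r^{n+1}_{\Delta t} = \frac12\int_0^\infty \tilde E(z)\left(I_z - \widetilde I_z\right)\left(I_z + \widetilde I_z\right)dz$.

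For the key steps, let $e := \varepsilon - \Pi_{\Delta t}\varepsilon$ be the error of the continuous piecewise-linear interpolant of $\varepsilon$ on the uniform grid, so that $e$ vanishes at every node, $\|e\|_\infty \le C_1\Delta t^2\|\ddot\varepsilon\|_\infty$, and $e' = \dot\varepsilon - (\Pi_{\Delta t}\varepsilon)'$ with $(\Pi_{\Delta t}\varepsilon)'$ equal to $\Delta\varepsilon_{k+1}/\Delta t$ on $(t_k,t_{k+1})$. Then $I_z - \widetilde I_z = \int_0^{t_{n+1}}\exp(-(t_{n+1}-s)/z)\,e'(s)\,ds$, and integrating by parts interval-by-interval — legitimate because $e$ is continuous and vanishes at the nodes, so all boundary contributions telescope to zero — gives $I_z - \widetilde I_z = -\frac1z\int_0^{t_{n+1}}\exp(-(t_{n+1}-s)/z)\,e(s)\,ds$, whence the uniform-in-$z$ bound $|I_z-\widetilde I_z|\le \|e\|_\infty \le C_1\Delta t^2\|\ddot\varepsilon\|_\infty$. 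Independently, $|I_z + \widetilde I_z|\le 2\|\dot\varepsilon\|_\infty\, z\left(1-\exp(-t_{n+1}/z)\right)\le 2\|\dot\varepsilon\|_\infty\min(z,t_{n+1})$, the crucial point being the factor $\min(z,t_{n+1})$ that supplies a power of $z$ for small $z$. Multiplying the two bounds and inserting $\tilde E(z)=\mathbb E/(\Gamma(1-\beta)\Gamma(\beta)z^{\beta+1})$ yields
$$ \bigl|\tilde r^{n+1}_{\Delta t}\bigr| \le C\,\Delta t^2 \int_0^\infty \frac{\min(z,t_{n+1})}{z^{\beta+1}}\,dz = C'\,t_{n+1}^{1-\beta}\,\Delta t^2, $$
where the $z$-integral converges precisely because $\beta\in(0,1)$ (splitting at $z=t_{n+1}$ gives $\int_0^{t_{n+1}}z^{-\beta}dz + t_{n+1}\int_{t_{n+1}}^\infty z^{-\beta-1}dz$). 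Since $t_{n+1}\le T$ and $\Delta t^2\le \Delta t^{2-\beta}$ for $\Delta t\le 1$, this establishes $\tilde r^{n+1}_{\Delta t}\le C\Delta t^{2-\beta}$ with $C$ depending only on $\varepsilon$ (through $\|\dot\varepsilon\|_\infty$ and $\|\ddot\varepsilon\|_\infty$) and on the fixed data $T,\beta$, as claimed.

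The delicate points, and hence the main obstacle, are (i) justifying the termwise integration by parts when $e'$ jumps across nodes, which is handled by the continuity and nodal vanishing of $e$, and (ii) securing estimates uniform in the spectral variable $z$ so that the $z$-integral can be performed last; the $\min(z,t_{n+1})$ bound on $I_z+\widetilde I_z$ is what keeps the otherwise non-integrable weight $z^{-\beta-1}$ summable as $z\to0$. If one instead prefers to argue directly on the discretized double integral (\ref{eq:L1_potential_1}), matching the form actually implemented, the plan is to expand the integrand error as two cross terms linear in $e'$ plus a quadratic term, bound the quadratic term by $\|e'\|_\infty^2\int\!\!\int(2t_{n+1}-s_1-s_2)^{-\beta}=O(\Delta t^2)$, and treat each cross term cellwise using the zero-mean property $\int_{t_i}^{t_{i+1}}e'=0$ together with a Taylor expansion of the kernel; there the difficulty migrates to the corner $(s_1,s_2)\to(t_{n+1},t_{n+1})$, where the kernel is singular and the adjacent cells must be estimated directly via the scaling $\int_{[0,\Delta t]^2}(x_1+x_2)^{-\beta}\,dx_1\,dx_2=O(\Delta t^{2-\beta})$. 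Both routes deliver the stated bound (in fact $O(\Delta t^2)$ under $C^2$ regularity of $\varepsilon$), and I would present the spectral one as primary because it sidesteps the singular corner summation entirely.
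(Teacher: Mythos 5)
Your proof is correct, but it follows a genuinely different route from the paper's. The paper argues directly on the double-integral form (\ref{eq:free_energy_double}): it splits the integrand error into two cross terms by adding and subtracting $\dot{\varepsilon}(s_1)\Delta\varepsilon_{j+1}/\Delta t$, rewrites each factor $\dot{\varepsilon}-\Delta\varepsilon/\Delta t$ as the derivative of the piecewise-linear interpolation error, integrates by parts cellwise in one variable at a time (boundary terms vanish at the nodes), and then must control the resulting singular kernel integrals $\int_{t_i}^{t_{i+1}}\bigl[(t_{n+1}-s)^{-\beta}-(2t_{n+1}-s)^{-\beta}\bigr]ds$ via a dedicated lemma (Lemma \ref{lemma1}), summing $O(\Delta t^{1-\beta})$ cell contributions to reach $O(\Delta t^{2-\beta})$. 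You instead lift the error back to the relaxation-spectrum representation (\ref{eq:ve_potential}), factor it as $\frac{1}{2}\int_0^\infty\tilde{E}(z)\,(I_z-\widetilde{I}_z)(I_z+\widetilde{I}_z)\,dz$, and bound each factor uniformly in $z$ — the same interpolation-error/integration-by-parts idea gives $|I_z-\widetilde{I}_z|\le\|e\|_\infty=O(\Delta t^2)$, while the elementary bound $z\bigl(1-e^{-t_{n+1}/z}\bigl)\le\min(z,t_{n+1})$ renders the weight $z^{-\beta-1}$ integrable — so the $z$-integration is performed last and no singular-corner bookkeeping is needed. Your exactness claim for the discrete scheme in spectral form checks out (the same Fubini/Gamma-integral identity that converts (\ref{eq:ve_potential}) into (\ref{eq:free_energy_double}) applies verbatim to the piecewise-constant rate), and both routes assume the same $C^2$ regularity of $\varepsilon$. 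What your approach buys is a cleaner argument and a sharper conclusion, $O(\Delta t^2)$ rather than $O(\Delta t^{2-\beta})$ (a gain the paper's route could also extract by integrating its kernel bound globally rather than summing per cell, but does not); what the paper's approach buys is that it operates directly on the form of the scheme as implemented, without invoking the spectral representation.
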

	\begin{proof}
		See Appendix \ref{Ap:Truncation}.
\end{proof}}

{Let the first term of the RHS of (\ref{eq:L1_potential_1}) be 
	the approximation $\psi^\delta_{n+1} \approx 
	\psi(\varepsilon_{n+1})$ evaluated at $t=t_{n+1}$}. Performing 
a change of variables $v_1 = t_{n+1} - s_1$ and $v_2 = t_{n+1} - s_2$, we 
obtain:
\begin{equation}\label{eq:L1_potential_2}
\psi^\delta_{n+1} = \mathbb{E}^* \sum^n_{i=0} \sum^n_{j=0} 
\frac{\Delta \varepsilon_{i+1} \Delta\varepsilon_{j+1}}{\Delta t^2} 
\int^{t_{n+1-i}}_{t_{n-i}} 
\int^{t_{n+1-j}}_{t_{n-j}} \left( v_1 + v_2 \right)^{-\beta} \,dv_1\,dv_2,
\end{equation}
with $\mathbb{E}^* = \mathbb{E}/\left(2 \Gamma(1-\beta)\right)$. Using the 
symmetry between the indices of strains and integration limits in 
(\ref{eq:L1_potential_2}), we obtain:
\begin{equation}\label{eq:L1_potential_3}
\psi^\delta_{n+1} = \mathbb{E}^* \sum^n_{i=0} \sum^n_{j=0} 
\frac{\Delta\varepsilon_{n-i+1} \Delta \varepsilon_{n-j+1}}{\Delta t^2} 
\int^{t_{i+1}}_{t_{i}} 
\int^{t_{j+1}}_{t_{j}} \left( v_1 + v_2 \right)^{-\beta} dv_1 dv_2.
\end{equation}
We can analytically evaluate the double integral sign in 
(\ref{eq:L1_potential_3}) to obtain:
\begin{align}\label{eq:L1_potential_4}
	\int^{t_{i+1}}_{t_{i}} & \int^{t_{j+1}}_{t_{j}} \left( v_1 + v_2 
	\right)^{-\beta} \,dv_1\,dv_2 = \nonumber \\
	& \frac{\Delta t^{2-\beta}}{(1-\beta) (2-\beta)} \left[ (i+j)^{2-\beta} - 
	2(i + 
	j + 1)^{2-\beta} + (i + j + 2)^{2-\beta} \right].
\end{align}
Substituting (\ref{eq:L1_potential_4}) into (\ref{eq:L1_potential_3}), 
{we 
	obtain the discrete free-energy density,}
\begin{equation}\label{eq:final_index}
\psi^\delta_{n+1} = \frac{\mathbb{E}}{2 \Delta t^\beta \Gamma(3-\beta)} 
\sum^n_{i=0} \sum^n_{j=0} b^{(\beta)}_{ij} 
(\varepsilon_{n+1-i}-\varepsilon_{n-i}) 
(\varepsilon_{n+1-j}-\varepsilon_{n-j}), 
\end{equation}
with the following entries for the convolution weight matrix:
\begin{equation}
b^{(\beta)}_{ij} = (i+j)^{2-\beta} - 2(i + j + 1)^{2-\beta} + (i + j + 
2)^{2-\beta}, \quad i,\,j = 0,\,1,\,\dots,\,n. \nonumber
\end{equation}
We can also rewrite (\ref{eq:final_index}) as the following matrix-vector 
product:
\begin{equation}\label{eq:final_matvec}
\psi^\delta_{n+1} =  \frac{\mathbb{E}}{2 \Delta t^\beta \Gamma(3-\beta)} 
\boldsymbol{\Delta \varepsilon}^T_{n+1} \mathbf{B}_{n+1} \boldsymbol{\Delta 
	\varepsilon}_{n+1},
\end{equation}
where we note that $\mathbf{B}_{n+1}$ is an $n \times n$ Hankel matrix of 
convolution weights {with} $2n-1$ unique 
entries $b^{(\beta)}_{ij}$. The $n\times 1$ vector $\boldsymbol{\Delta 
	\varepsilon}_{n+1}$ is given by:
\begin{equation}\label{eq:DE}
\boldsymbol{\Delta \varepsilon}_{n+1} = \left[ \varepsilon_{n+1} - 
\varepsilon_n,\,\, \varepsilon_{n} - \varepsilon_{n-1},\,\, \dots,\,\, 
\varepsilon_2 - \varepsilon_1,\,\, \varepsilon_1 - \varepsilon_0 \right]^T.
\end{equation}

\textbf{Fast Computation of Matrix-Vector Products:} The form 
(\ref{eq:final_matvec}) requires a {full} matrix-vector product 
with complexity $\mathcal{O}(n^2)$ {for} every time-step, 
{and consequently $\mathcal{O}(N^3)$ for full 
	time-integration}. {Our aim is to reduce such complexity by 
	leveraging the obtained matrix forms. Since $\mathbf{B}$ is a Hankel 
	matrix, it relates to a Toeplitz matrix} $\mathbf{T}_{n+1}$ through 
$\mathbf{B}_{n+1} = \mathbf{T}_{n+1}\mathbf{J}_{n+1}$, where $\mathbf{J}_{n+1}$ 
represents a {reflection} matrix with ones in the secondary 
diagonal and zero everywhere else. Therefore, we obtain:
\begin{equation}\label{eq:slow}
\psi^\delta_{n+1} =  \frac{\mathbb{E}}{2 \Delta t^\beta
	\Gamma(3-\beta)} \boldsymbol{\Delta \varepsilon}^T_{n+1} \mathbf{T}_{n+1} 
\mathbf{J}_{n+1} 
\boldsymbol{\Delta \varepsilon}_{n+1}.
\end{equation}
{The Toeplitz matrix has a circulant embedding of size $2n\times 
	2n$ 
	\cite{Feng2007},} fully described by a $2n\times 1$ vector of unique 
coefficients:
\begin{equation}\label{eq:vec_B}
\mathbf{c}^{(\beta)}_{n+1} = \left[ b^{(\beta)}_{0,n},\,\, 
b^{(\beta)}_{1,n},\,\, \dots,\,\, b^{(\beta)}_{n,n},\, 0,\,\, 
b^{(\beta)}_{0,0},\,\, b^{(\beta)}_{0,1},\,\, \dots,\,\, b^{(\beta)}_{0,n-1} 
\right]^T.
\end{equation}
{Let the following} zero-padded vector $\boldsymbol{\Delta 
	\varepsilon}^*_{n+1}$, with size $2n \times 1$:
\begin{equation}\label{eq:vec_epstar}
\boldsymbol{\Delta \varepsilon}^*_{n+1} = \left[ (\boldsymbol{\Delta 
	\varepsilon}^f_{n+1})_{n \times 1},\,\,\, (\mathbf{0})_{n\times 1} 
	\right]^T,
\end{equation}
where $\boldsymbol{\Delta \varepsilon}^f_{n+1} = 
\mathbf{J}_{n+1}\boldsymbol{\Delta \varepsilon}_{n+1}$ {denotes 
	the reflection} of $\boldsymbol{\Delta \varepsilon}_{n+1}$, given by:
\begin{equation}\label{eq:vec_epflip}
\boldsymbol{\Delta \varepsilon}^f_{n+1} = \left[ \varepsilon_1 - 
\varepsilon_0,\,\,\varepsilon_2 - \varepsilon_1,\,\, \dots,\,\,\varepsilon_{n} 
- \varepsilon_{n-1},\,\,\varepsilon_{n+1} - \varepsilon_n \right]^T.
\end{equation}
Finally, {we obtain the fast form of} (\ref{eq:final_matvec}) 
for every time-step $t_{n+1}$:
\begin{equation}\label{eq:fast}
\psi^\delta_{n+1} =  \frac{\mathbb{E}}{2 \Delta t^\beta \Gamma(3-\beta)} 
\boldsymbol{\Delta \varepsilon}^T_{n+1} \mathcal{F}^{-1}\left( 
\mathcal{F}(\mathbf{c}^{(\beta)}_{n+1}) \odot \mathcal{F} (\boldsymbol{\Delta 
	\varepsilon}^*_{n+1}) \right),
\end{equation} 
where $\mathcal{F}(\cdot)$ and $\mathcal{F}^{-1}(\cdot)$ denote, respectively, 
the forward and inverse FFTs and $\odot$ represents the Hadamard entry-wise 
product. {Recalling} $Y^{ve}(\varepsilon^{ve}) = 
-\bar{\psi}^{ve}(\varepsilon^{ve})$, the discrete 
damage energy release rate is given by:
\begin{equation}\label{eq:fastY}
\boxed{Y^{ve}_{n+1} =  -\frac{\mathbb{E}}{2 \Delta t^{\beta_E} 
		\Gamma(3-\beta_E)} \boldsymbol{\Delta \varepsilon}^{ve^T}_{n+1} 
	\mathcal{F}^{-1}\left( \mathcal{F}(\mathbf{c}^{(\beta_E)}_{n+1}) \odot 
	\mathcal{F} (\boldsymbol{\Delta \varepsilon}^{ve^*}_{n+1}) \right),}
\end{equation} 
where,
\begin{equation}\label{eq:DE_ve}
\boldsymbol{\Delta \varepsilon}^{ve}_{n+1} = \left[ \varepsilon^{ve}_{n+1} - 
\varepsilon^{ve}_n,\,\, \varepsilon^{ve}_{n} - \varepsilon^{ve}_{n-1},\,\, 
\dots,\,\, \varepsilon^{ve}_2 - \varepsilon^{ve}_1,\,\, \varepsilon^{ve}_1 - 
\varepsilon^{ve}_0 \right]^T,
\end{equation}
and with $\boldsymbol{\Delta \varepsilon}^{ve^*}_{n+1}$ {being 
	the reflected} and zero-padded form of (\ref{eq:DE_ve}). {Also}, 
the vector 
$\mathbf{c}^{(\beta_E)}_{n+1}$ is 
given by:
\begin{equation}\label{eq:vec_B_ve}
\mathbf{c}^{(\beta_E)}_{n+1} = \left[ b^{(\beta_E)}_{0,n},\,\, 
b^{(\beta_E)}_{1,n},\,\, \dots,\,\, b^{(\beta_E)}_{n,n},\, 0,\,\, 
b^{(\beta_E)}_{0,0},\,\, b^{(\beta_E)}_{0,1},\,\, \dots,\,\, 
b^{(\beta_E)}_{0,n-1} \right]^T,
\end{equation}
with $b^{(\beta_E)}_{ij} = (i+j)^{2-\beta_E} - 2(i + j + 1)^{2-\beta_E} + (i + 
j + 2)^{2-\beta_E}$ and $i,\,j = 0,\,1,\,\dots,\,n$. Algorithm \ref{alg:1} 
{demonstrates the numerical evaluation of the damage energy 
	release rate for every} time-step $t = t_{n+1}$.

\begin{algorithm}[t!]
	\caption{Fast computation of fractional damage energy release rate.}
	\label{alg:1}
	\begin{algorithmic}[1]
		\STATE{Database: $\boldsymbol{\varepsilon}^{ve}$ and $2N-1$ 
			coefficients 
			$b^{(\beta_E)}_{0,0},\,\dots,\,b^{(\beta_E)}_{0,N},\,b^{(\beta_E)}_{1,N},\,\dots,\,b^{(\beta_E)}_{N,N}$.}
		\STATE{Compute $\boldsymbol{\Delta \varepsilon}^{ve}_{n+1}$ using 
			(\ref{eq:DE}), and 
			form $\boldsymbol{\Delta \varepsilon}^{ve^*}_{n+1}$ using 
			(\ref{eq:vec_epstar}).}
		\STATE{Compute the FFT $\mathcal{F}(\boldsymbol{\Delta 
				\varepsilon}^{ve^*}_{n+1})$.}
		\STATE{Compute $\mathbf{c}^{(\beta_E)}_{n+1}$ using (\ref{eq:vec_B}), 
			using the 
			known $b^{(\beta_E)}$ coefficients.}
		\STATE{Compute the FFT $\mathcal{F}(\mathbf{c}^{(\beta_E)}_{n+1})$.}
		\STATE{$Y^{ve}_{n+1} = -\frac{\mathbb{E}}{2 \Delta t^{\beta_E} 
				\Gamma(3-\beta_E)} \boldsymbol{\Delta 
				\varepsilon}^{ve^T}_{n+1}  
			\mathcal{F}^{-1}\left( \mathcal{F}(\mathbf{c}^{(\beta_E)}_{n+1}) 
			\odot 
			\mathcal{F} (\boldsymbol{\Delta \varepsilon}^{ve^*}_{n+1}) 
			\right)$.}
		\RETURN{$Y^{ve}_{n+1}$.}
	\end{algorithmic}
\end{algorithm}


\textbf{Computational Complexity of the Developed Scheme:} 
{Employing 
	(\ref{eq:fastY}) for the full time-fractional integration over 
	$\Omega$ yields a total computational complexity of $\mathcal{O}(N^2 \log 
	N)$, 
	similar to the $\mathcal{O}(N^2)$ complexity of the employed L1 FD scheme 
	for 
	fractional Caputo derivatives. Furthermore, the required storage for the 
	developed scheme is $\mathcal{O}(N)$.}

\section{Numerical Tests}
\label{Sec:NumericalResults}

{We present two qualitative examples with monotone and cyclic loads 
	for the SB free-energy density and the developed damaged, 
	visco-elasto-plastic model, where we verify the convergence and 
	computational complexity of the developed algorithms. For convergence 
	analyses, let $\mathbf{u}^*$ and $\mathbf{u}^\delta$ be, respectively, the 
	reference and approximate solutions in $\Omega =(0,T]$, for a specific 
	time-step size $\Delta t$. The global relative 
	error and convergence order are given, 
	respectively, as:}
\begin{equation}\label{eq:errordef}
\mathrm{err}(\Delta t) = \frac{\vert \vert \mathbf{u}^* - \mathbf{u}^\delta 
	\vert \vert_{L^\infty(\Omega)}}{\vert \vert \mathbf{u}^*  \vert 
	\vert_{L^\infty(\Omega)}}, \quad \mathrm{Order} = \log_2 
\left[\frac{\textrm{err}(\Delta t)}{\mathrm{err}(\Delta t / 2)}\right]
\end{equation}

We consider homogeneous initial conditions for all model variables in all 
cases. The presented algorithms were implemented in MATLAB R2019a and were run 
in a system with Intel Core i7-6700 CPU with 3.40 GHz, 16 GB RAM and Ubuntu 
18.04.2 LTS operating system.

\begin{example}[Convergence for Free-Energy Density]
	{We start with two convergence tests for the fractional 
		Helmholtz free-energy density using fabricated solutions. The first one 
		employs second-order increasing monotone strains, and the second uses 
		cyclic varying strains.}	
	
	\textbullet \,\textbf{Monotone Strains.} Let $t \in (0, T]$, with total 
	time $T 
	= 1\, [s]$. We define the quadratic strain {form $\varepsilon(t) = 
	\left(t/T\right)^2$. Therefore, analytical solution for the Helmholtz 
	free-energy (\ref{eq:ve_potential}) can be obtained directly as:}
	\begin{equation*}
		\psi^*(\varepsilon) = \frac{2^{2-\beta} \left[8+2^\beta \left(\beta - 
			5\right)\right]}{\Gamma(5-\beta)}\, T^4\, \mathbb{E}\, 
		\varepsilon^{2-\frac{\beta}{2}}.
	\end{equation*}
	We set $\mathbb{E} = 100\, [Pa.s^\beta]$, and 
	estimate the computational complexity of the direct (\ref{eq:slow}) and fast
	(\ref{eq:fast}) forms, with {varying} $\Delta t$. Figure \ref{fig:f1} 
	presents the {approximate free-energy} solution, 
	where we {recover the standard limit cases of a Hookean spring 
		($\beta \to 0$) and a Newtonian dashpot ($\beta \to 1$), as well as 
		second-order accuracy for the developed discretization.}
	\begin{figure}[t!]
		\centering
		\begin{subfigure}[b]{0.48\textwidth}
			\includegraphics[width=\columnwidth]{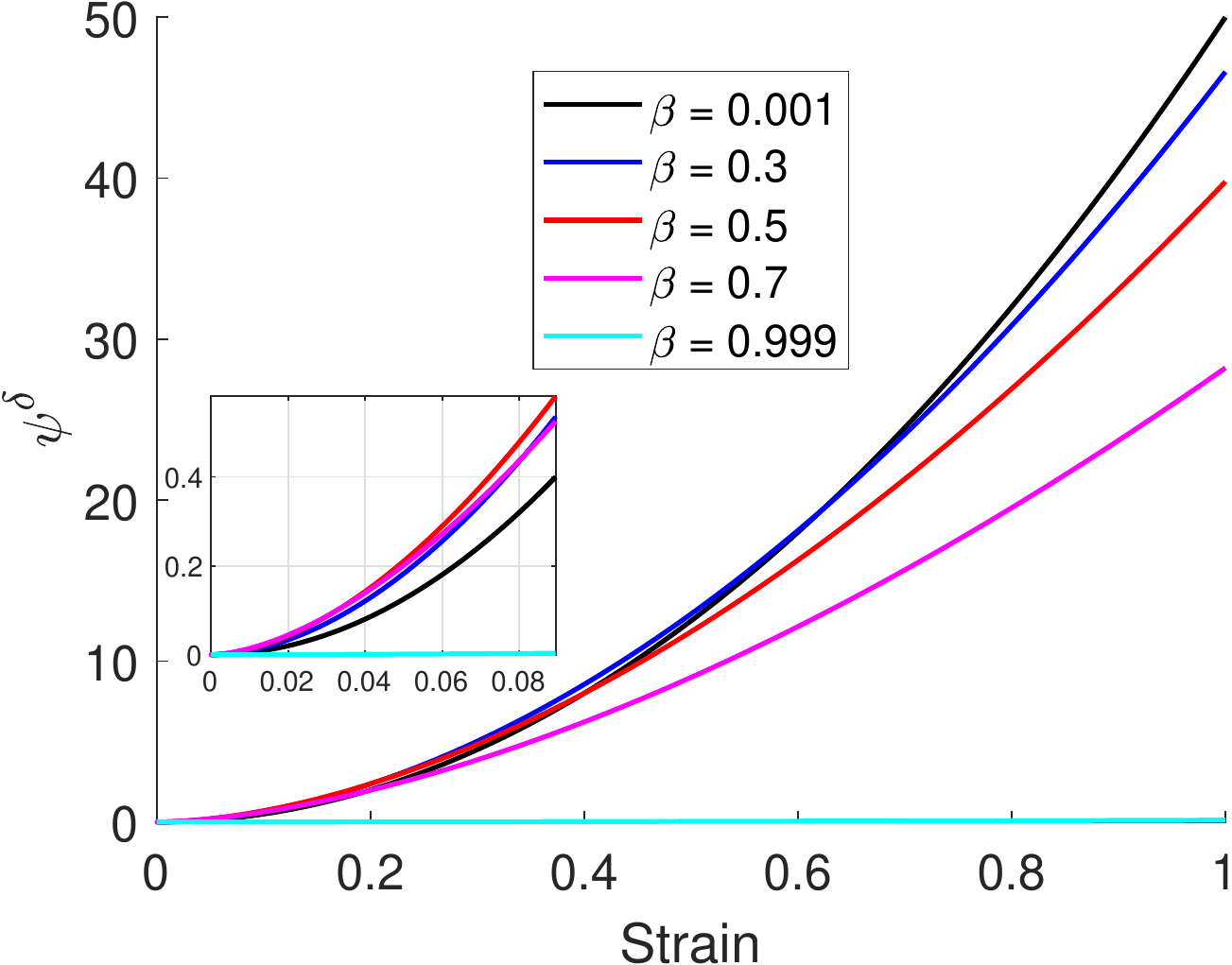}
		\end{subfigure}%
		\begin{subfigure}[b]{0.49\textwidth}
			\includegraphics[width=\columnwidth]{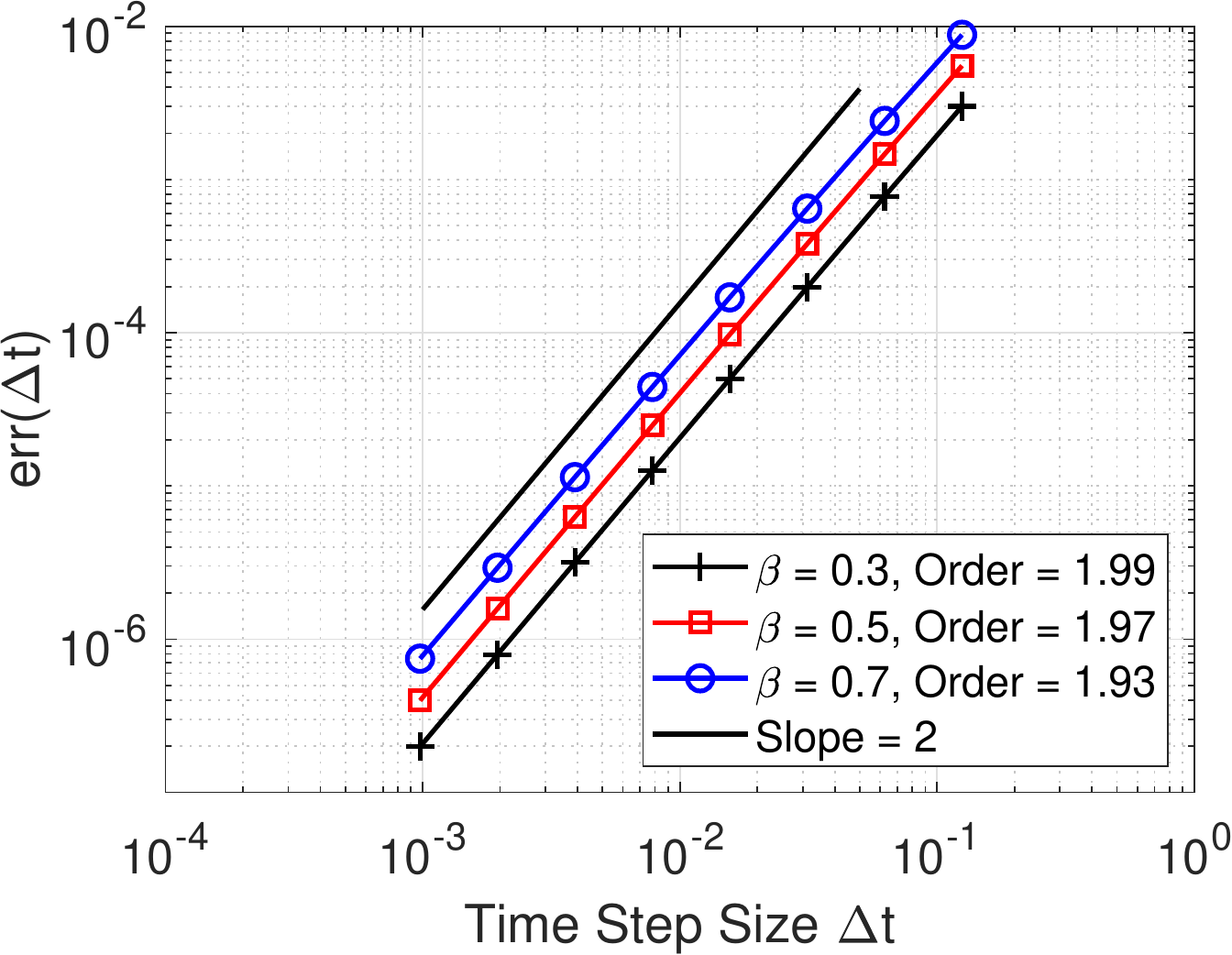}
		\end{subfigure}%
		\caption{Numerical results for the free-energy computation with a 
		quadratic 
			form for $\varepsilon(t)$. \textit{(Left)} $\psi^\delta$ 
			\textit{vs} strain 
			with varying $\beta$. \textit{(Right)} Relative error \textit{vs} 
			time-step 
			size for varying $\beta$, with second-order accuracy. 
			\label{fig:f1}}
	\end{figure}
	{Figure \ref{fig:f3} presents the obtained $\mathcal{O}(N^3)$ 
		and $\mathcal{O}(N^2 \log N)$ computational complexities, respectively, 
		for the 
		direct and FFT-based free-energy time-integration schemes. The 
		break-even point lies at $N=200$ time-steps.}
	\begin{figure}[t!]
		\centering
		\includegraphics[width=0.5\columnwidth]{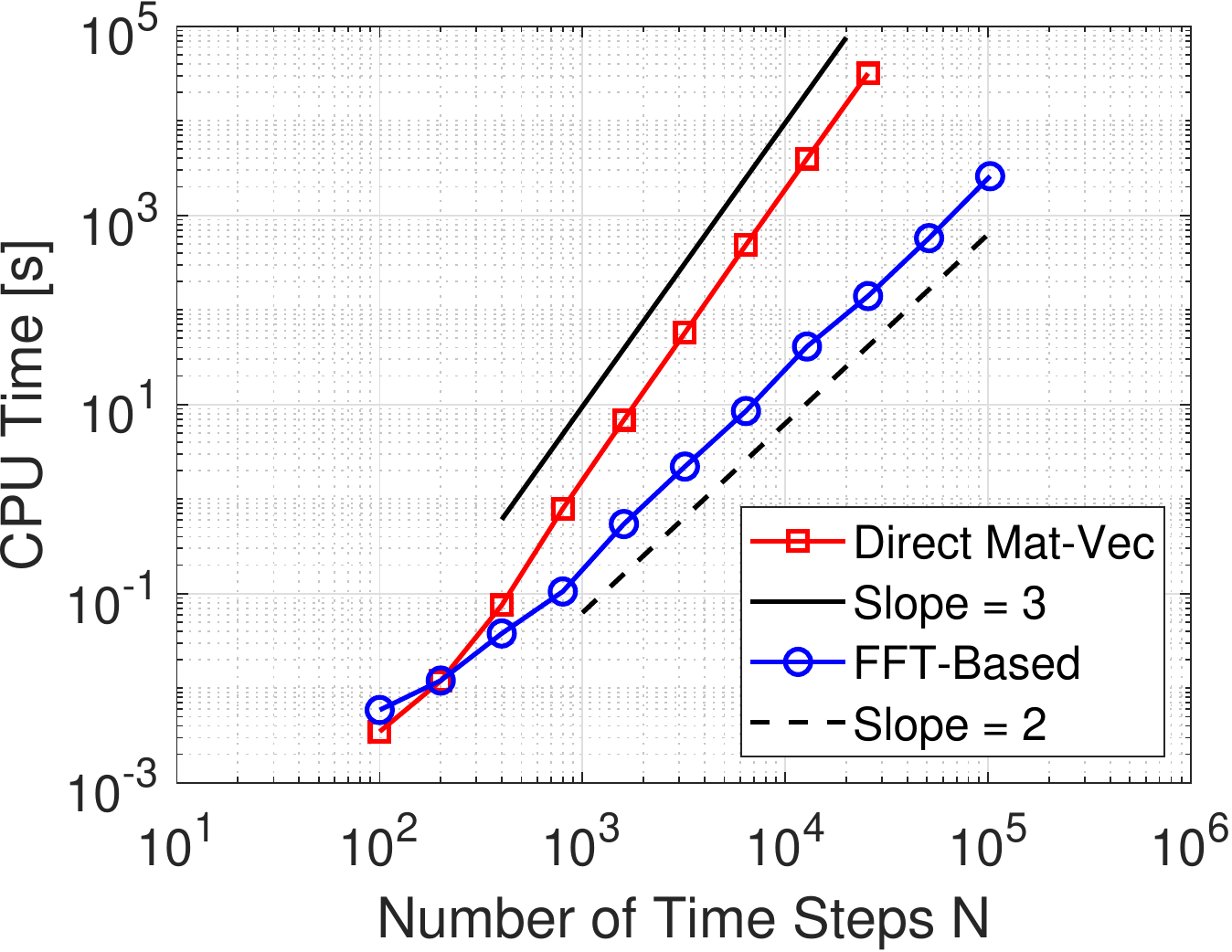}
		\caption{CPU time \textit{vs} number of time-steps of the 
			developed time-integration schemes for the fractional Helmholtz 
			free-energy density under monotone strains.
			\label{fig:f3}}
	\end{figure}
	
	\textbullet \,\textbf{Cyclic Strains.} We {utilize} a fabricated 
	sinusoidal strain solution $\varepsilon(t) = \varepsilon_0 \sin (\omega 
	t)$, 
	with $t \in (0,\,T]$, with amplitude $\epsilon_0$ and frequency $\omega$. 
	The 
	corresponding analytical solution for $\psi^*$ is cumbersome, and therefore 
	not 
	shown here. We set $\varepsilon_0 = 1$, $\omega = \pi\, [s^{-1}]$, $T = 
	50\, 
	[s]$, $\beta = 0.5$ and $\mathbb{E} = 1\, [Pa.s^{0.5}]$, {and 
		start with a sufficient number of time-steps to capture the oscillation 
		modes.} 
	Figure 
	\ref{fig:f4} illustrates the obtained results, where we capture the highly 
	oscillatory behavior for both transient and steady-state parts with 
	second-order accuracy.
	\begin{figure}[t!]
		\centering
		\begin{subfigure}[b]{0.5\textwidth}
			\includegraphics[width=\columnwidth]{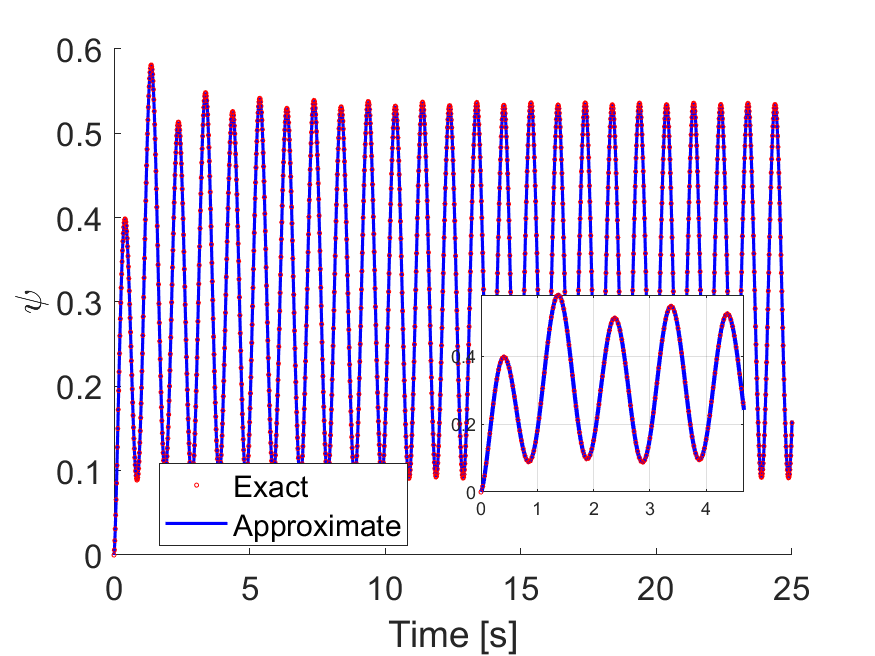}
			\caption{Free-energy \textit{vs} time.}		
		\end{subfigure}%
		\begin{subfigure}[b]{0.46\textwidth}
			\includegraphics[width=\columnwidth]{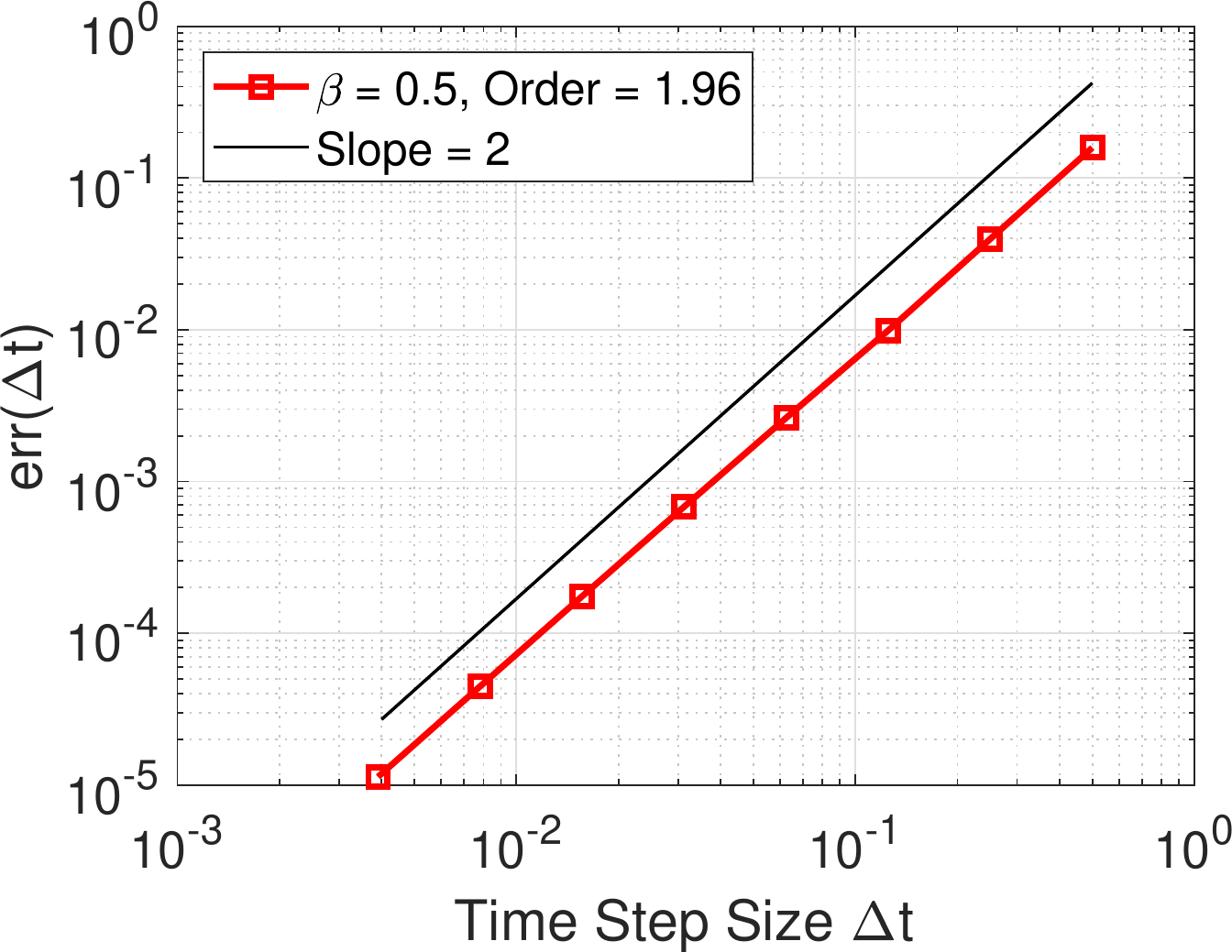}
			\caption{Relative error \textit{vs} time-step size.}		
		\end{subfigure}%
		\caption{\textit{(A)} Free-energy density computations for 
			cyclic strains \textit{vs} time, $N=3200$ time-steps and $\beta = 
			0.5$. \textit{(B)} Convergence analysis showing second-order 
			accuracy. 
			\label{fig:f4}}
	\end{figure}
\end{example}

\begin{example}[{Fractional Visco-Elasto-Plastic Model with 
		Damage}]
	
	We test our developed model and fractional return-mapping 
	algorithm subject to prescribed {monotone/cyclic strains}. 
	{The convergence analysis is done with a benchmark solution and 
		we analyze the quality of the anomalous damage response with respect to 
		the 
		fractional orders $\beta_E,\beta_K$ from visco-elasticity/plasticity 
		under
		different strain amplitudes/frequencies.}
	
	\textbullet\, \textbf{Monotone Strains.} Let $\varepsilon(t) = 
	\dot{\varepsilon} t$, where $t \in (0,\,T]$, final time $T=0.03125\, [s]$ 
	and 
	strain rate $\dot{\varepsilon} = 0.64\,[s^{-1}]$, {and therefore 
		$\varepsilon(T) = 0.02$. We set} $\beta_E 
	= 0.5$, $\mathbb{E} = 50\,[Pa.s^{0.5}]$, $\mathbb{K} = 
	10\,[Pa.s^{\beta_K}]$, 
	$\tau^Y = 1\,[Pa]$, $S = 10^{-4}\,[Pa]$ and $s=1$. A benchmark solution for 
	the 
	stress (\textit{see Fig.\ref{fig:stress_strain_monotone}}) is computed with 
	time-step size $\Delta t = 2^{-20}\,[s]$ and varying fractional orders 
	$\beta_K$, {where we observe that higher values for $\beta_K$ 
		led to increased hardening and damage for the prescribed strain rate. }
	\begin{figure}[t!]
		\centering
		\includegraphics[width=0.49\columnwidth]{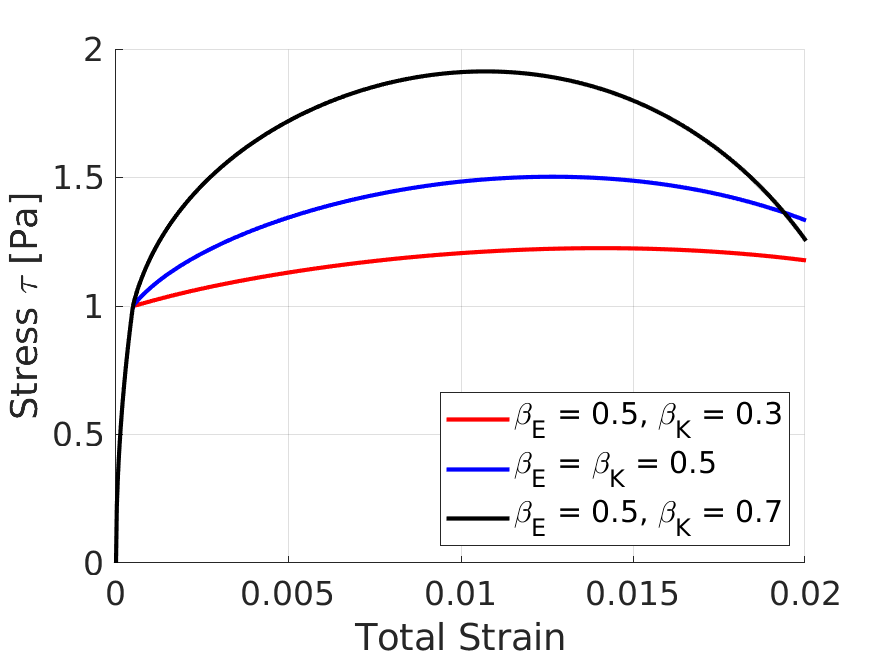}
		\caption{Stress \textit{vs} strain for the benchmark solution with 
			time-step size $\Delta t = 2^{-20}$, $\beta_E = 0.5$ and different 
			$\beta_K$ values. \label{fig:stress_strain_monotone}}
	\end{figure}
	{We observe a linear convergence rate in Figure \ref{fig:conv_M1}, due to 
	the 
		employed backward-Euler discretization in the fractional return-mapping 
		algorithm. A second-order computational complexity for the fractional 
		return-mapping algorithm is also verified in Figure 
		\ref{fig:time_M1}.}
	\begin{figure}[t!]
		\centering
		\begin{subfigure}[b]{0.49\textwidth}
			\includegraphics[width=\columnwidth]{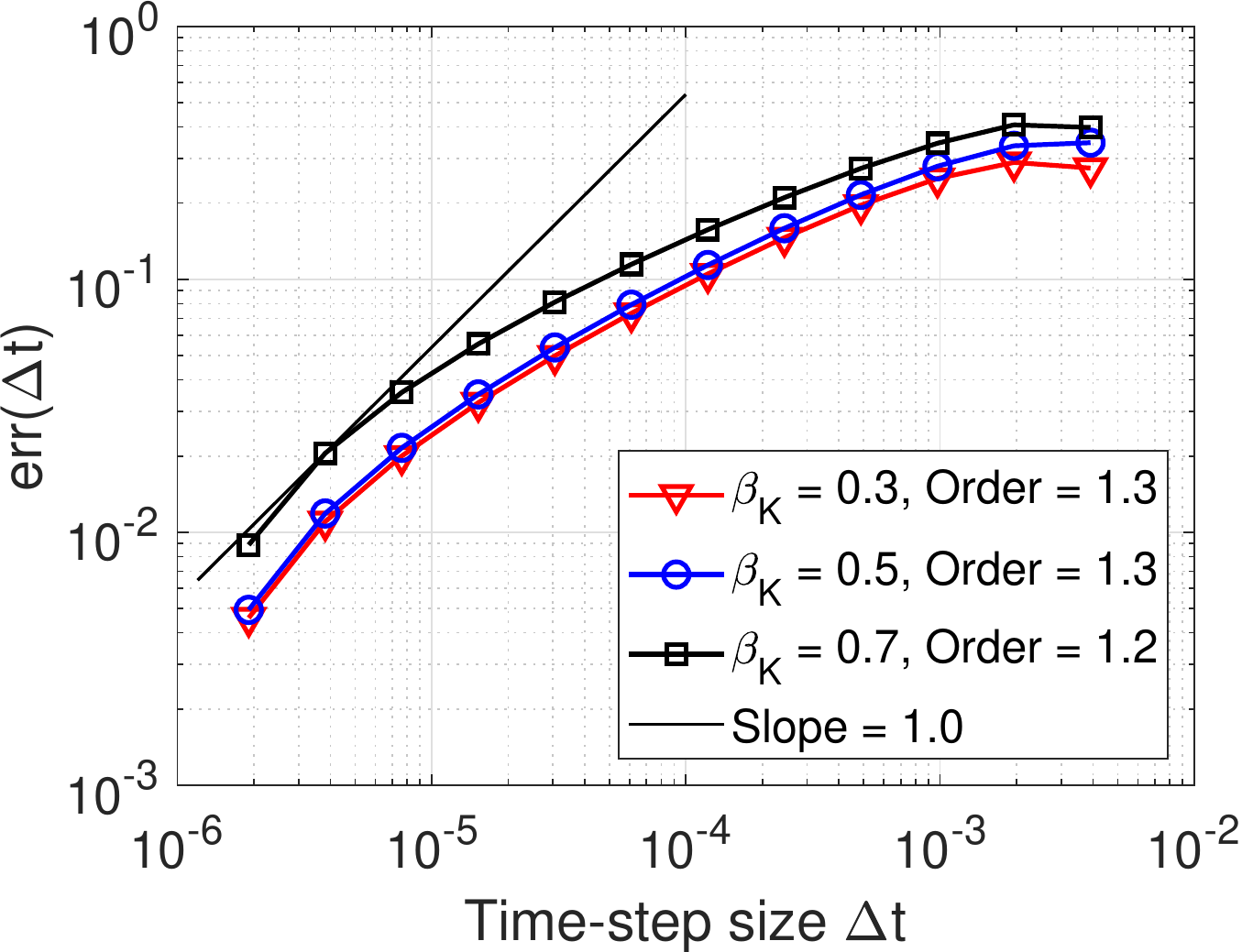}
			\caption{Convergence behavior.\label{fig:conv_M1}}		
		\end{subfigure}%
		\begin{subfigure}[b]{0.485\textwidth}
			\includegraphics[width=\columnwidth]{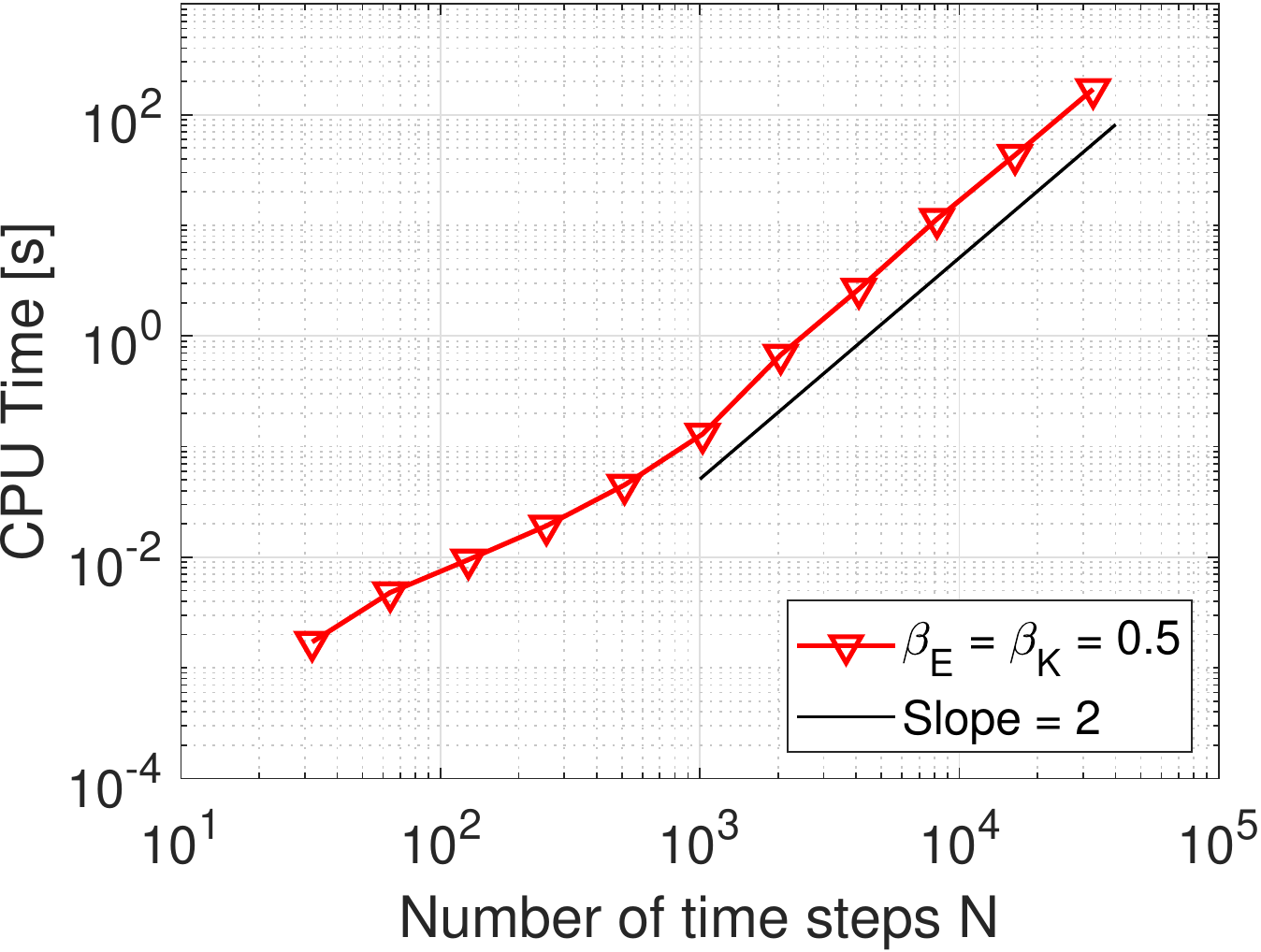}
			\caption{Computational complexity.\label{fig:time_M1}}		
		\end{subfigure}%
		\caption{Fractional visco-elasto-plastic model with damage under 
		monotone 
			strains. \textit{(A)} First-order convergence behavior. 
			\textit{(B)} 
			Computational time \textit{vs} number of time-steps, with 
			second-order 
			computational complexity. \label{fig:conv_time_M1}}
	\end{figure}
	The influence of {hardening} and visco-elastic damage energy release rate 
	is shown in Figure \ref{fig:Damage_Y_monotone}. {We observe that higher 
	damage values are obtained for $\beta_K = 0.7$, despite the higher 
	accumulated plastic strains for lower values of $\beta_K$. The higher 
	damage is instead due to higher values of damage energy release rates shown 
	in Figure \ref{fig:Y_ve_monotone} for $\beta_K = 0.7$. We note that similar 
	to the stress-strain response, the visco-elastic fractional free-energy is 
	power-law 
		memory-dependent on the strain rates, therefore leading to the observed 
		anomalous behavior.}
	\begin{figure}[t!]
		\centering
		\begin{subfigure}[b]{0.49\textwidth}
			\includegraphics[width=\columnwidth]{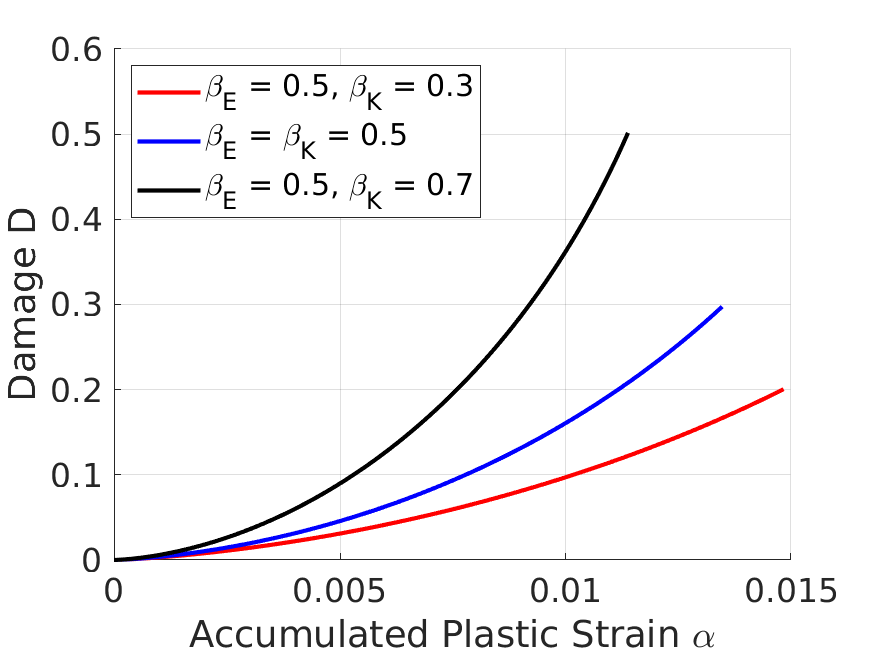}
			\caption{\label{fig:Damage_alpha_monotone}}		
		\end{subfigure}%
		\begin{subfigure}[b]{0.49\textwidth}
			\includegraphics[width=\columnwidth]{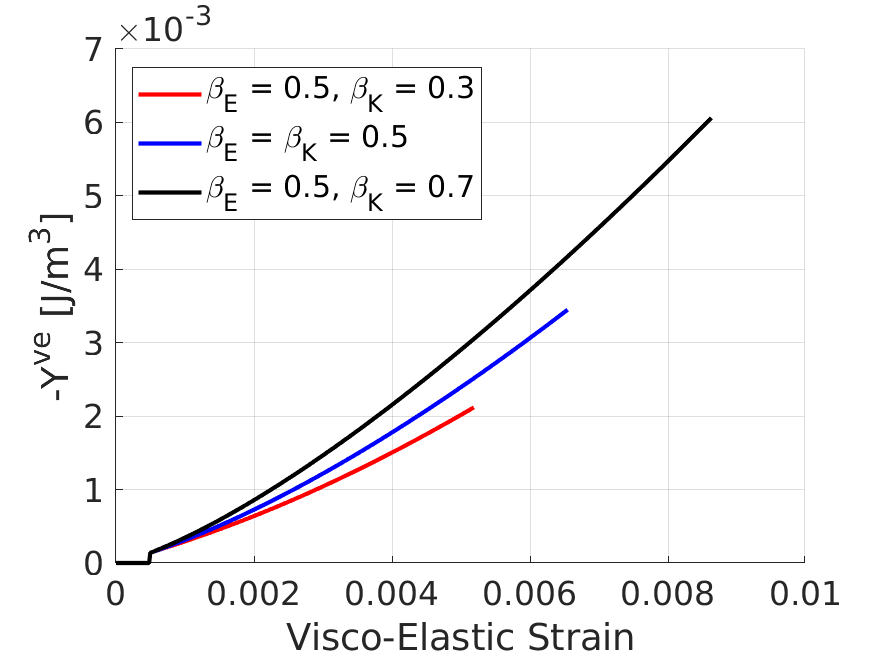}
			\caption{\label{fig:Y_ve_monotone}}		
		\end{subfigure}%
		\caption{Developed model under monotone strains: \textit{(A)} Damage 
			\textit{vs} accumulated plastic strain, with higher damage but less 
			plasticity for higher $\beta_K$. \textit{(B)} Damage energy release 
			rate 
			\textit{vs} visco-elastic strains, which are both larger for higher 
			values 
			of fractional order $\beta_K$. \label{fig:Damage_Y_monotone}}
	\end{figure}
	
	\textbullet\, \textbf{Cyclic Strains.} {To investigate the interplay 
	between 
		the damage/hardening/\\viscosity and hysteresis effects, we perform a 
		constant 
		rate loading/unloading cyclic strain test, mathematically expressed as:}
	\begin{equation*}
		\varepsilon(t) = \frac{2 \varepsilon_A}{\pi} 
		\arcsin\left(\sin\left(2\pi\omega 
		t\right)\right),
	\end{equation*}
	where $\varepsilon_A$ and $\omega$ represent, respectively, the amplitude 
	and 
	frequency of {total strains. Here, we focus on low-cycle 
		fatigue behavior, and therefore we set} $\varepsilon = 0.1$, and three 
		strain frequencies $\omega = \lbrace 
	2\pi,\,4\pi,\,8\pi\rbrace\,[s^{-1}]$, which correspond, respectively, to 
	approximate absolute strain rates of $|\dot{\varepsilon}| \approx \lbrace 
	2.51,\,5.02,\,10.05\rbrace$. {We set a total time $T = 10\,[s]$, 
		and for each frequency, we use} $N = \lbrace 
	8\,000,\,16\,000,\,32\,000\rbrace$ time-steps, corresponding to $\Delta t = 
	\lbrace 1.25\times 10^{-3},\, 6.25\times 10^{-4},\,3.125\times 
	10^{-4}\rbrace\,[s]$. The material parameters are set to 
	$\mathbb{E} = 25\,[Pa.s^{\beta_E}]$, $\mathbb{K} = 10\,[Pa.s^{\beta_K}]$, 
	$\tau^Y = 1\,[Pa]$, $S = 1\,[Pa]$ and $s = 1$, {where we set the fractional 
		order values} $\beta_E = \beta_K = \lbrace 0.3,\,0.5,\,0.7\rbrace$.
	
	{The stress-strain hysteresis results are presented in Figure 
		\ref{fig:Stress_cyclic}. We observe that higher frequencies led to 
		more softening in the model, while higher values of fractional orders 
		$\beta_E$, $\beta_K$ led to increased hardening, followed by 
		softening}. 
	\begin{figure}[t!]
		\centering
		\begin{subfigure}[b]{0.33\textwidth}
			\includegraphics[width=\columnwidth]{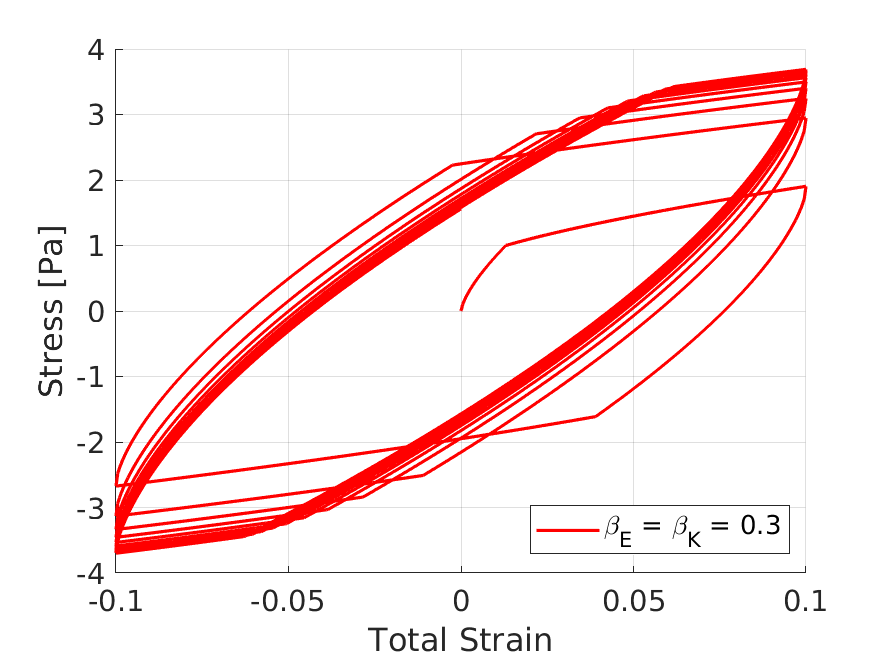}
			\caption{$\beta_E = \beta_K = 0.3$.}		
		\end{subfigure}%
		\begin{subfigure}[b]{0.33\textwidth}
			\includegraphics[width=\columnwidth]{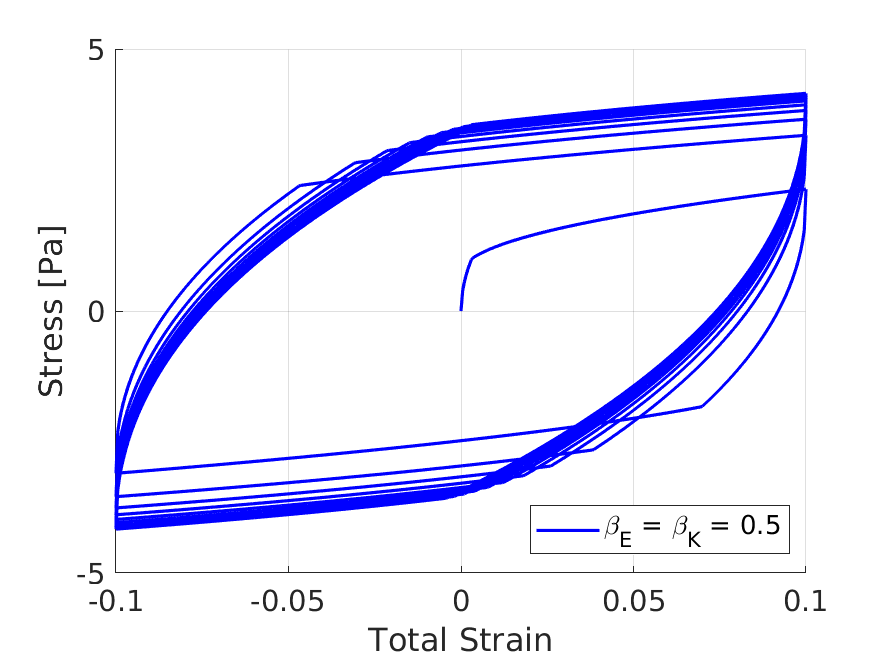}
			\caption{$\beta_E = \beta_K = 0.5$.}		
		\end{subfigure}%
		\begin{subfigure}[b]{0.33\textwidth}
			\includegraphics[width=\columnwidth]{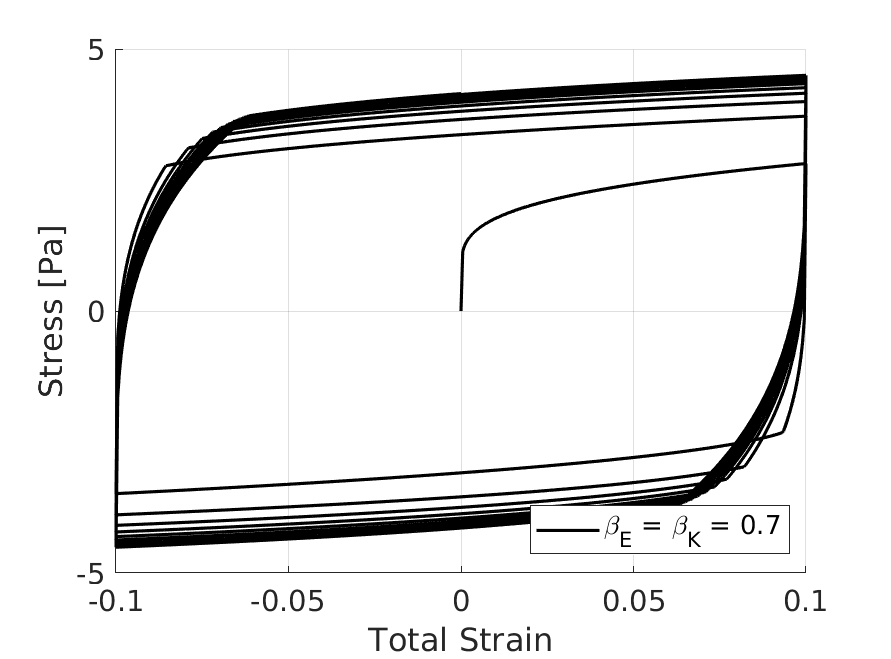}
			\caption{$\beta_E = \beta_K = 0.7$.}		
		\end{subfigure} \\
		\begin{subfigure}[b]{0.33\textwidth}
			\includegraphics[width=\columnwidth]{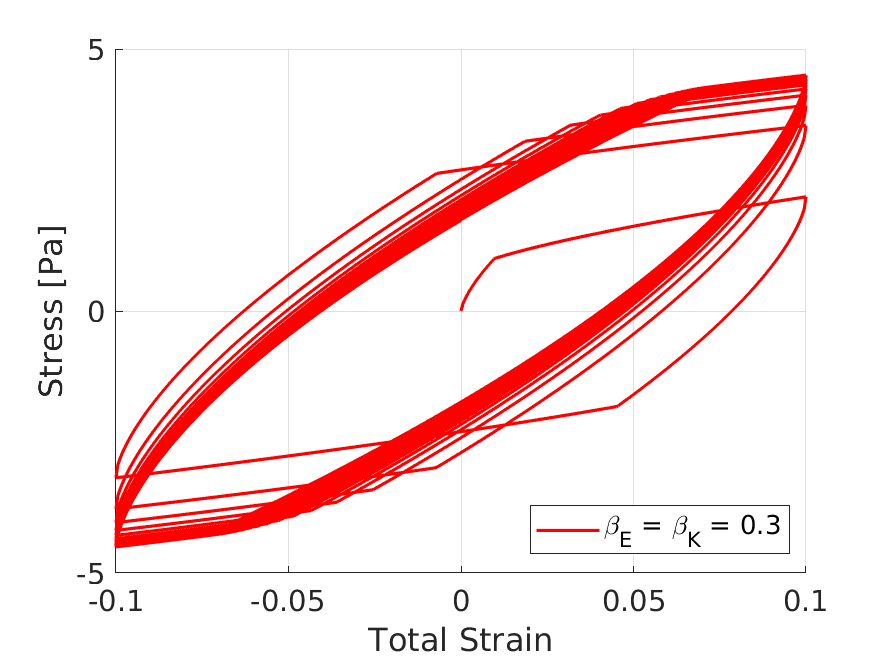}
			\caption{$\beta_E = \beta_K = 0.3$.}		
		\end{subfigure}%
		\begin{subfigure}[b]{0.33\textwidth}
			\includegraphics[width=\columnwidth]{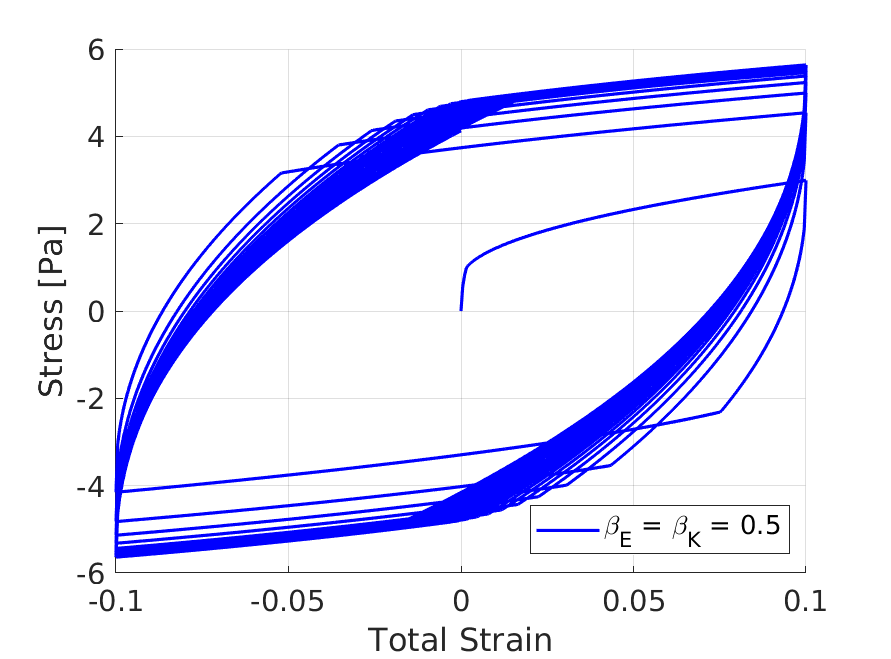}
			\caption{$\beta_E = \beta_K = 0.5$.}		
		\end{subfigure}%
		\begin{subfigure}[b]{0.33\textwidth}
			\includegraphics[width=\columnwidth]{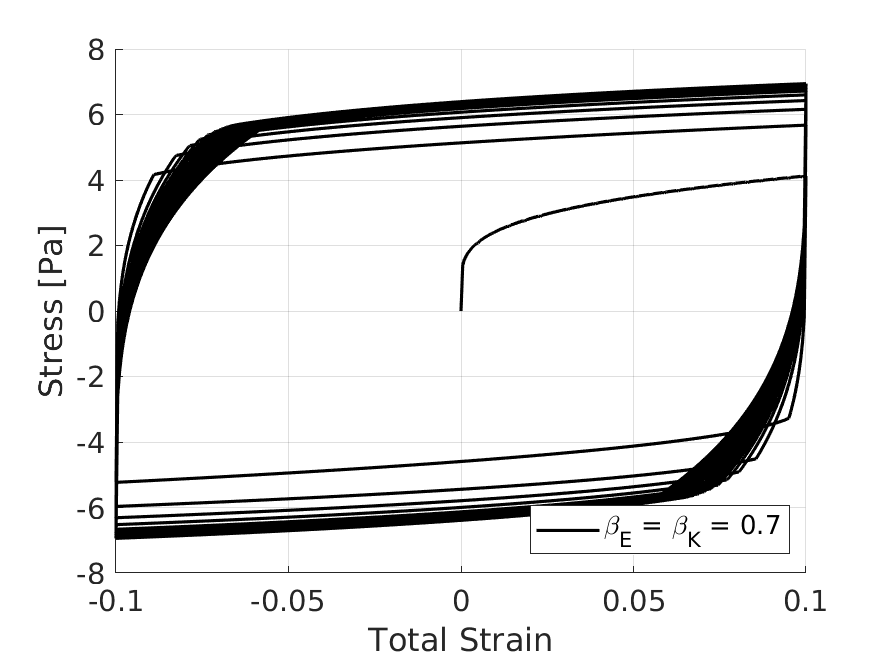}
			\caption{$\beta_E = \beta_K = 0.7$.}		
		\end{subfigure} \\
		\begin{subfigure}[b]{0.33\textwidth}
			\includegraphics[width=\columnwidth]{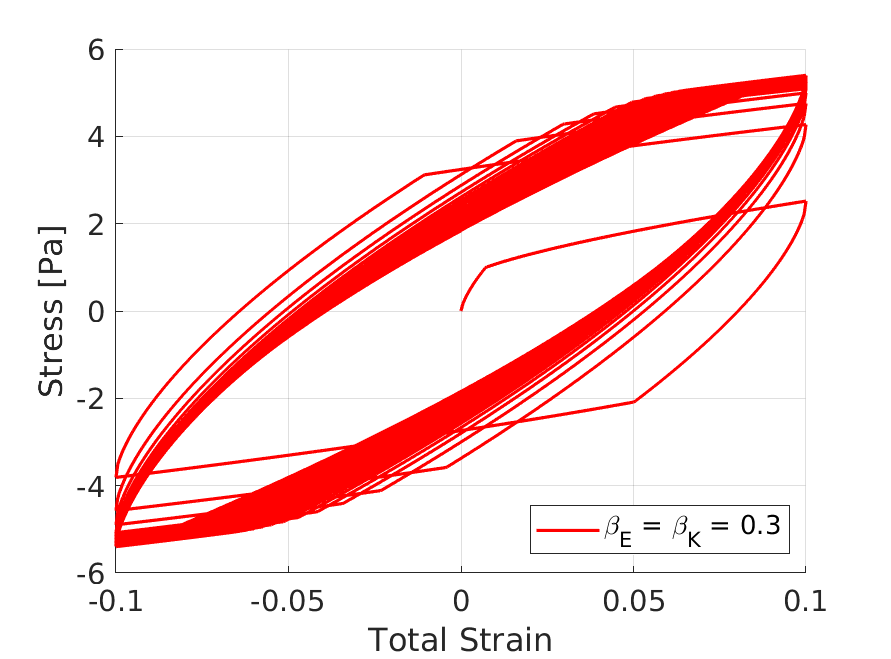}
			\caption{$\beta_E = \beta_K = 0.3$.}		
		\end{subfigure}%
		\begin{subfigure}[b]{0.33\textwidth}
			\includegraphics[width=\columnwidth]{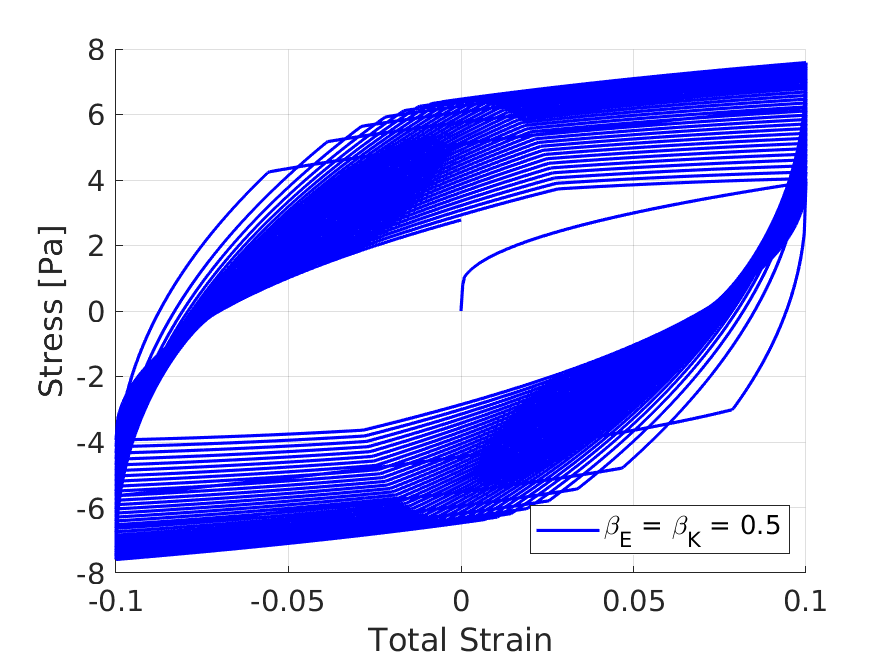}
			\caption{$\beta_E = \beta_K = 0.5$.}		
		\end{subfigure}%
		\begin{subfigure}[b]{0.33\textwidth}
			\includegraphics[width=\columnwidth]{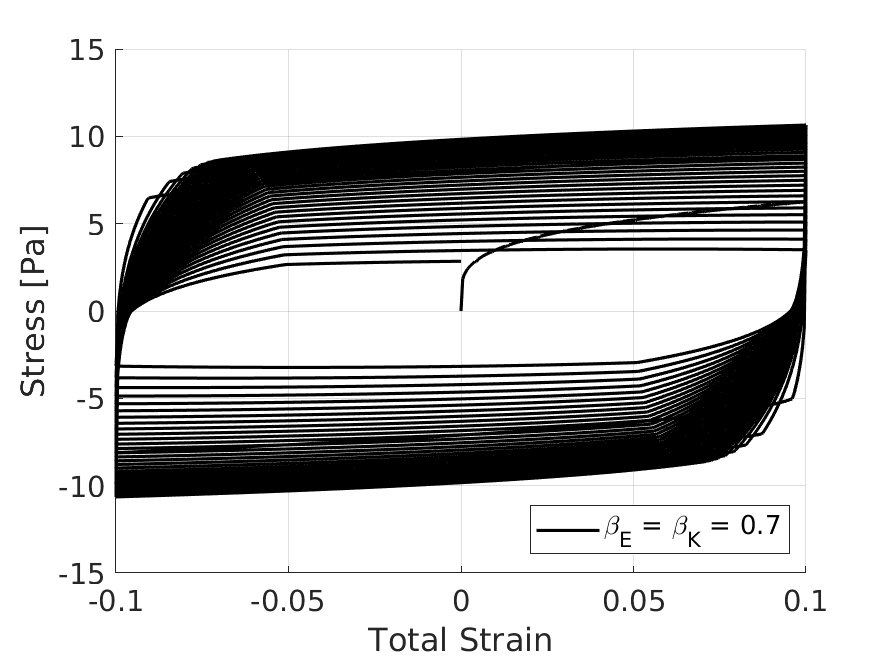}
			\caption{$\beta_E = \beta_K = 0.7$.}		
		\end{subfigure} 
		\caption{Stress hysteresis response for cyclic strains with frequencies 
			\textit{(A)-(C)} $\omega = 2\pi$, \textit{(D)-(F)} 
			$\omega = 4\pi$, \textit{(G)-(I)} $\omega = 8\pi$. 
			\label{fig:Stress_cyclic}}
	\end{figure}
	{Such damage increase is illustrated in Fig. \ref{fig:Damage_cyclic}, where 
	we 
		observe that higher 
		$\beta_E$ and $\beta_K$ values led to increased plasticity for all 
		cases, 
		with a significant} increase of damage rates for $\beta_E = \beta_K = 
		0.5, 0.7$ 
	when $\omega = 8\pi$. We also observe from Fig. \ref{fig:Y_cyclic} that due 
	to 
	the anomalous nature of the fractional visco-elastic free-energy potential, 
	the 
	damage energy release rates substantially increase with higher fractional 
	orders and loading rates, which contribute to the observed higher values of 
	damage. Therefore, for this model, {higher} material viscosity in both 
	visco-elastic and visco-plastic parts might be sufficient to yield lower 
	values 
	of damage {at low frequencies due to internal dissipation mechanisms, but 
	at 
		higher frequencies and therefore more loading 
		cycles, they lead} to earlier material failure.
	\begin{figure}[t!]
		\centering
		\begin{subfigure}[b]{0.33\textwidth}
			\includegraphics[width=\columnwidth]{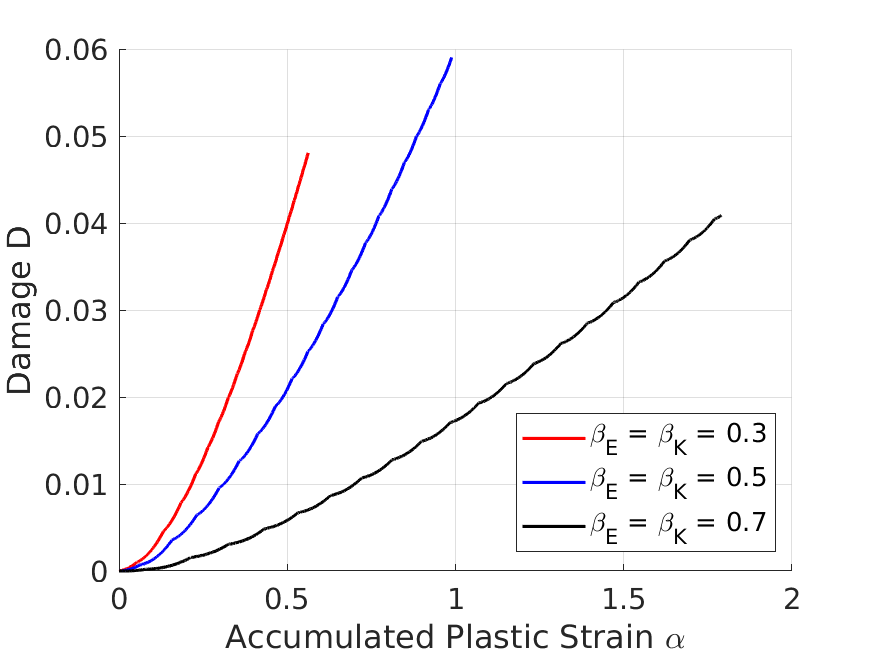}
			\caption{$\omega = 2\pi$.}		
		\end{subfigure}%
		\begin{subfigure}[b]{0.33\textwidth}
			\includegraphics[width=\columnwidth]{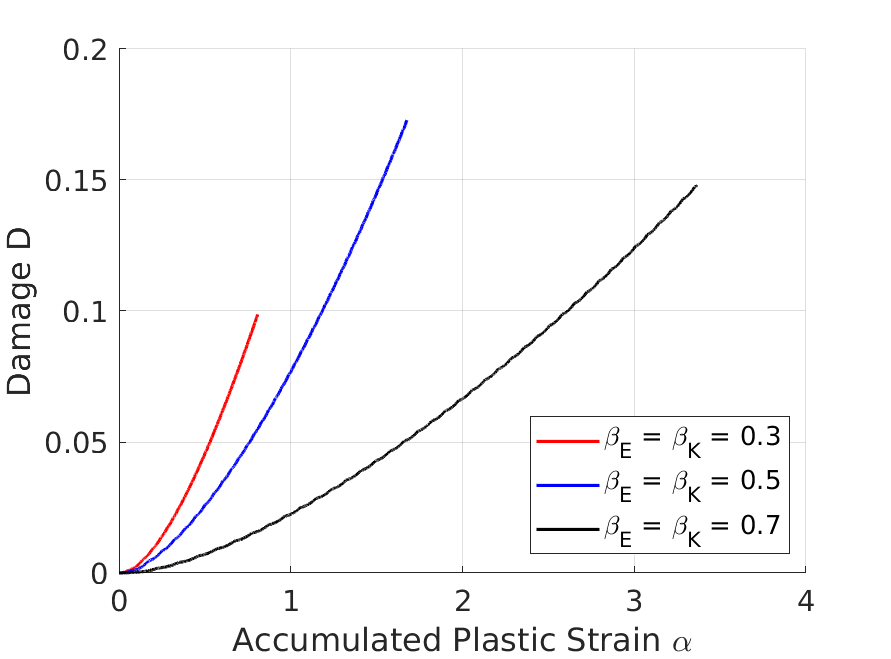}
			\caption{$\omega = 4\pi$.}		
		\end{subfigure}%
		\begin{subfigure}[b]{0.33\textwidth}
			\includegraphics[width=\columnwidth]{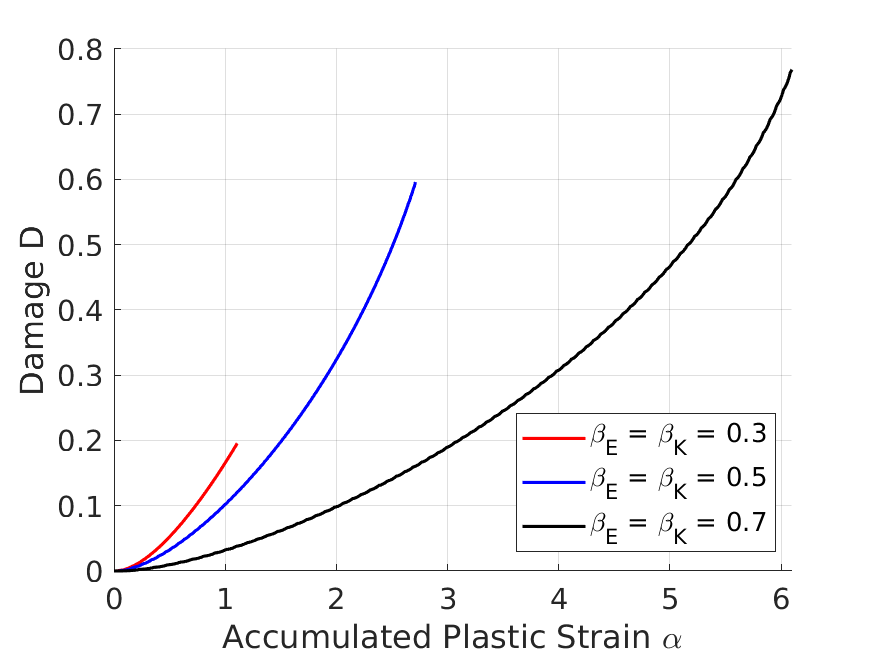}
			\caption{$\omega = 8\pi$.}		
		\end{subfigure}
		\caption{Damage \textit{vs} accumulated plastic strains with varying 
		strain 
			frequencies. \label{fig:Damage_cyclic}}
	\end{figure}
	\begin{figure}[t!]
		\centering
		\begin{subfigure}[b]{0.333\textwidth}
			\includegraphics[width=\columnwidth]{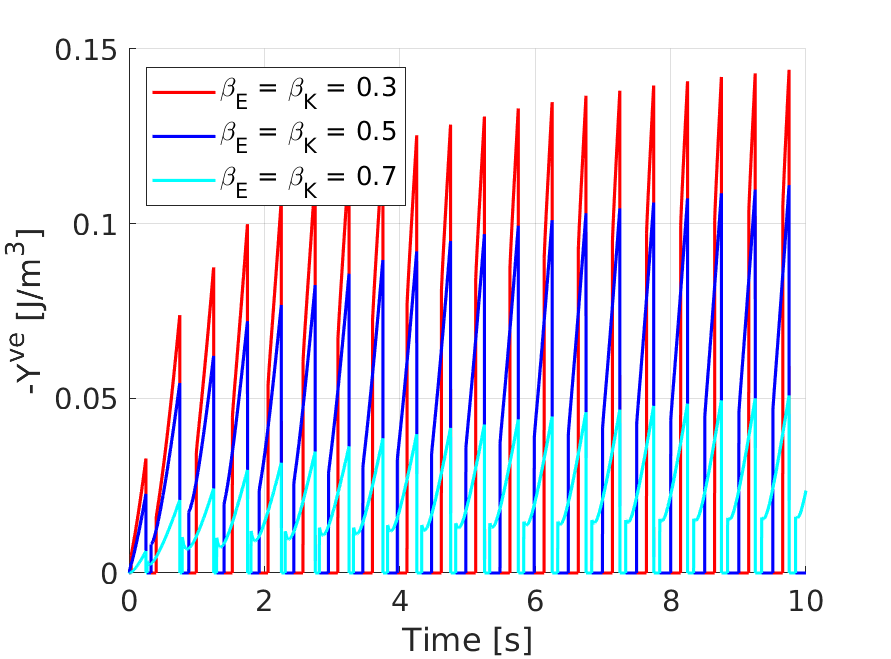}
			\caption{$\omega = 2\pi$.}		
		\end{subfigure}%
		\begin{subfigure}[b]{0.333\textwidth}
			\includegraphics[width=\columnwidth]{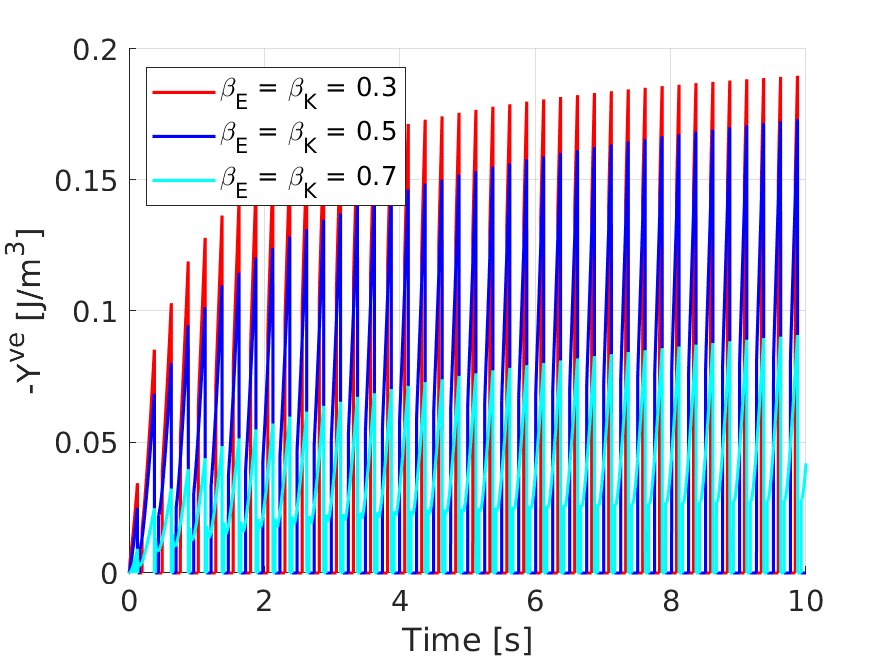}
			\caption{$\omega = 4\pi$.}		
		\end{subfigure}%
		\begin{subfigure}[b]{0.333\textwidth}
			\includegraphics[width=\columnwidth]{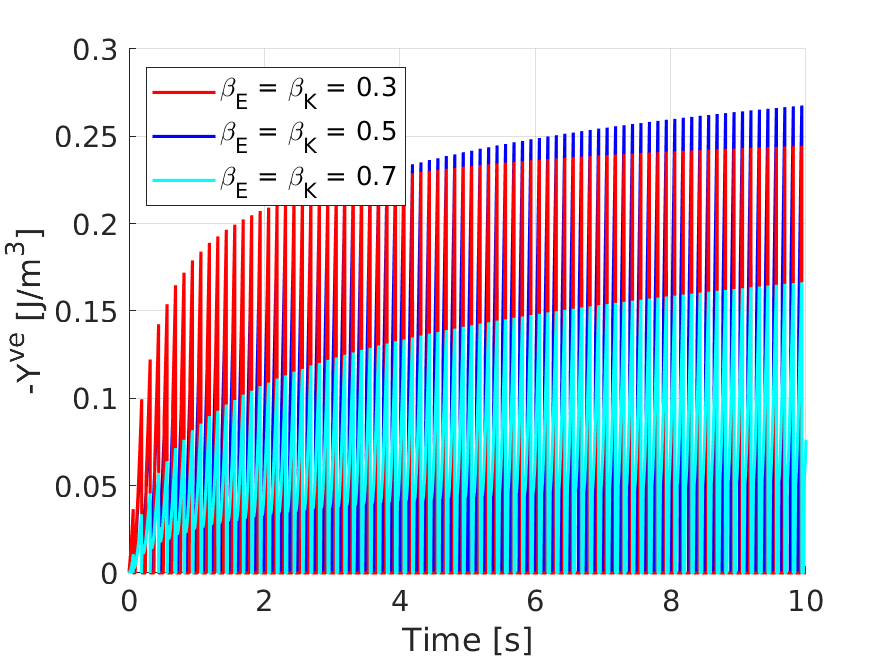}
			\caption{$\omega = 8\pi$.}		
		\end{subfigure}
		\caption{Damage energy release rate \textit{vs} time for the total 
		strain 
			with varying frequency. \label{fig:Y_cyclic}}
	\end{figure}
	
\end{example}

\section{Conclusions}
\label{Sec:Conclusions}

{We developed a thermodynamically consistent, fractional 
	vis\-co-elasto-plastic model with memory-dependent damage using fractional 
	Helmholtz free-energies, visco-plastic/damage 
	potentials and the Clausius-Duhem inequality}. The damage energy release 
	rate 
was {derived from the visco-elastic free-energy to obtain a 
	consistent bulk energy loss for anomalous materials.}

{A first-order, semi-implicit fractional return-mapping 
	algorithm, which generalizes existing standard ones, was developed to solve 
	the 
	resulting nonlinear system of FDEs. We note that most 
	existing algorithms for standard plasticity models are not more accurate 
	than 
	ours. We also developed a new FD scheme with accuracy 
	$\mathcal{O}(\Delta 
	t^{2-\beta})$ for the free-energy/damage energy release, with computational 
	complexity of $\mathcal{O}(N^2 \log N)$ through FFTs.}

We {also} performed a set of numerical tests and observed that:
\begin{itemize}
	\item The {fractional} orders $\beta_E$ and $\beta_K$ 
	{tune} the competition between the plastic slip and 
	damage energy 
	release rate for {damage} evolution.
	
	\item Higher values of $\beta_E$, $\beta_K$ yielded lower damage 
	levels for lower strain rates and cycles; However, the damage increased 
	{significantly faster than lower values of $\beta_E, 
		\beta_k$ for} higher strain 
	rates/cycles.
	
	\item For the free-energy discretization, the break-even point between the 
	original and fast schemes was low, about $N=200$ time-steps.
	
	\item {The developed discretization recovered the limit 
		Hookean $\beta \to 
		0$ and Newtonian $\beta \to 1$ cases for the free-energy.}
\end{itemize}

{In the presence of} single- to multi- singularities, 
{the accuracy of the developed scheme can improve} through 
a variant of a self-singularity-capturing approach \cite{Suzuki2018Singularity} 
for all fractional operators. Nevertheless, non-smooth loading/unloading 
conditions pose additional challenges to develop high-order schemes for the 
model. {In} terms of efficiency, the 
computational bottleneck lies in the free-energy discretization, which 
{needs further improvements before employing fast schemes} for 
the fractional derivatives, \textit{e.g.}, fast convolution 
\cite{Zheng2017, Lubich2002} and fast multi-pole approaches \cite{Vikram2010}. 
{Variants} of the developed model can be incorporated in a 
straightforward fashion. The visco-elastic part could be composed of any 
{data/design-driven} arrangement of SB elements, 
{\textit{e.g.} Kelvin-Voigt, Maxwell, Kelvin-Zener 
	\cite{Schiessel1995}, while adding the corresponding energy release rates 
	to 
	the damage potential. In addition, similar frameworks involving fractional 
	damage energy release rates can be employed to phase field models 
	\cite{boldrini2016non}. Potential applications of the developed work could 
	be, 
	\textit{e.g.}, failure of polymers, bio-tissues, and ductile 
	metals, where the fractional-orders $\beta_E$, $\beta_K$ can be related to 
	the 
	evolving fractal-like microstructure \cite{Mashayekhi2019Fractal}. The 
	presented model could also be employed in the context of nonlinear 
	dynamics} 
of mechanical systems \cite{Suzuki2014,varghaei2019Vibration}.

Finally, the employment of nonlocal truncated time derivatives \cite{Du2017} 
and potentials could have additional impacts on reducing the computational 
complexity of the developed scheme, due to the shorter memory. Furthermore, the 
use of such operators seems particularly interesting to 
naturally address the ``memory reset" for internal variables such as the 
hardening $\alpha$ for hysteresis loading \cite{Suzuki2016}.

\appendix
\section{Proof of Lemma \ref{lemma:stress_strain}}
\label{Ap:lemma_stress_strain}

We take the time derivative of the free-energy 
(\ref{eq:ve_potential}) and obtain:
\begin{equation}\label{eq:proofSB_1}
\dot{\psi} = \int^\infty_0 \tilde{E}(z) \int^t_0 \exp 
\left(-\frac{t-s}{z}\right) \dot{\varepsilon}(s) ds \left( \frac{d}{dt} 
\int^t_0 \exp \left(-\frac{t-s}{z}\right) \dot{\varepsilon}(s) ds \right) dz
\end{equation}
with
\begin{equation}\label{eq:proofSB_2}
\frac{d}{dt} \int^t_0 \exp \left(-\frac{t-s}{z}\right) \dot{\varepsilon}(s) ds 
= \dot{\varepsilon}(t) - \int^t_0\frac{1}{z}\exp\left(-\frac{t-s}{z}\right) 
\dot{\varepsilon}(s) ds.
\end{equation}
Substituting (\ref{eq:proofSB_2}) into (\ref{eq:proofSB_1}), we obtain:
\begin{align}
\dot{\psi} = & \left[ \int^\infty_0 \tilde{E}(z) \left( \int^t_0 \exp 
\left(-\frac{t-s}{z}\right) \dot{\varepsilon}(s) ds\right) dz\right] 
\dot{\varepsilon} \nonumber \\
\qquad \qquad {} & \qquad - \int^\infty_0 \frac{\tilde{E}(z)}{z} \left(\int^t_0 
\exp\left(-\frac{t-s}{z}\right)\dot{\varepsilon}(s) ds\right)^2 dz.
\label{eq:proofSB_3}
\end{align}
Let $\mathbb{E}^* = \frac{\mathbb{E}}{\Gamma(1-\beta)\Gamma(\beta)}$. 
{Note that the term inside brackets in (\ref{eq:proofSB_3}) 
	equals:}
{\small
	\begin{align}
	{} & \int^\infty_0 \tilde{E}(z) \left( \int^t_0 
	\exp\left(-\frac{t-s}{z}\right) 
	\dot{\varepsilon}(s)\,ds \right)\, dz \nonumber \\
	& = \int^\infty_0 \frac{\mathbb {E}^*}{z^{\beta + 1}} \left( \int^t_0 
	\exp\left(-\frac{t-s}{z}\right) \dot{\varepsilon}(s)\,ds \right)\, dz 
	\nonumber \\
	& = \mathbb{E}^* \int^t_0 \left[ \int^\infty_0 z^{-(\beta + 1)} 
	\exp\left(-\frac{t-s}{z}\right)\, dz \right]\, \dot{\varepsilon}(s)\,ds 
	\nonumber \\
	& = \mathbb{E}^* \int^t_0 \left[ \int^\infty_0 \frac{u^{\nu - 
			1}}{(t-s)^{\beta}} \exp(-u) \, du \right]\, 
			\dot{\varepsilon}(s)\,ds 
	\nonumber \\
	& = \mathbb{E}^* \int^t_0 \left[ \frac{\Gamma(\beta)}{(t-s)^{\beta}} 
	\right]\, \dot{\varepsilon}(s)\,ds = \frac{\mathbb{E}}{\Gamma(1-\beta)} 
	\int^t_0 
	\frac{\dot{\varepsilon}(s)}{(t-s)^{\beta}} \,ds \nonumber \\
	& = \mathbb{E}\, {}^C_0 \mathcal{D}^{\beta}_t \left(\varepsilon \right). 
	\label{eq:eqv_improper_caputo}
	\end{align}}
{Substituting} (\ref{eq:eqv_improper_caputo}) into 
(\ref{eq:proofSB_3}), and the result into (\ref{eq:CD-2}), we obtain:
\begin{equation}\label{eq:proofSB_4}
\left[ \tau - \mathbb{E}\, {}^C_0 \mathcal{D}^{\beta}_t \left(\varepsilon 
\right)\right] \dot{\varepsilon} + \int^\infty_0 \frac{\tilde{E}(z)}{z} 
\left(\int^t_0 \exp\left(-\frac{t-s}{z}\right)\dot{\varepsilon}(s) ds\right)^2 
dz \ge 0.
\end{equation}
Since the strain rate $\dot{\varepsilon}$ {is} arbitrary, we 
{set} the argument inside brackets {to zero 
	without violating the above} inequality, {to obtain} the 
stress-strain relationship for the SB model:
\begin{equation*}
\tau = \mathbb{E}\, {}^C_0 \mathcal{D}^{\beta}_t \left(\varepsilon \right).
\end{equation*}
Furthermore, the remainder of (\ref{eq:proofSB_4}) represents an internal 
positive mechanical dissipation, given by:
\begin{equation*}
\mathcal{D}_{mech}(\varepsilon) = \int^\infty_0 \frac{\tilde{E}(z)}{z} 
\left(\int^t_0 \exp\left(-\frac{t-s}{z}\right)\dot{\varepsilon}(s) ds\right)^2 
dz \ge 0,
\end{equation*}
where the above inequality holds, {since} $z$ and $\tilde{E}(z)$ 
are positive.

\section{Proof of Theorem \ref{thm:positive_D}}
\label{Ap:positive_D}

We recall the mechanical dissipation (\ref{eq:DMech}):
\begin{equation}\label{eq:DMech1}
\tau \dot{\varepsilon}^{vp} - R \dot{\alpha} - Y \dot{D} + 
(1-D)\left(\mathcal{D}^{ve}_{mech} + \mathcal{D}^{vp}_{mech}\right) \ge 0,
\end{equation}
where we must prove that the above inequality holds. Substituting 
(\ref{eq:evol_vp_damage}), (\ref{eq:evol_hardening_damage}) into 
(\ref{eq:DMech}) yields:
\begin{equation*}
\tau \mathrm{sign}(\tau)\dot{\gamma} - R \dot{\gamma} - Y \dot{D} + 
(1-D)\left(\mathcal{D}^{ve}_{mech} + \mathcal{D}^{vp}_{mech}\right) \ge 0,
\end{equation*}
Rearranging the above equation, we obtain:
\begin{equation*}
\left[\lvert \tau \rvert - \left( (1-D)\tau^Y + R\right) \right] 
\dot{\gamma} - Y \dot{D} + 
(1-D)\left(\tau^Y \dot{\gamma} + \mathcal{D}^{ve}_{mech} + 
\mathcal{D}^{vp}_{mech}\right) \ge 0,
\end{equation*}
where {the} first term {is is related to the 
	persistency} condition \cite{Simo1998}:
\begin{equation*}
\left[\lvert \tau \rvert - \left( (1-D)\tau^Y + R\left( \alpha \right) \right) 
\right] \dot{\gamma} = f(\tau, \alpha, D) \dot{\gamma} = 0,
\end{equation*}
{and} therefore,
\begin{equation}\label{eq:inequality_proof}
\left(1-D\right)\left(\tau^Y \dot{\gamma} + \mathcal{D}^{ve}_{mech} + 
\mathcal{D}^{vp}_{mech}\right) - Y \dot{D} \ge 0.
\end{equation}
We check the positiveness for each term of the above inequality. 
{For the first term, since the damage is always positive, so is 
	$(1-D)$.} Also, we have $\tau^Y > 0$ and $\dot{\gamma} \ge 0$ 
\cite{Simo1998}. {From Lemma \ref{lemma:stress_strain} the 
	mechanical dissipations $\mathcal{D}^{ve}_{mech}$ and 
	$\mathcal{D}^{vp}_{mech}$ 
	are also positive. For the second term, $-Y$ is positive and so is
	$\dot{D}$, since $D$ is a monotonically increasing function}. Therefore, 
inequality (\ref{eq:inequality_proof}) holds, 
and thus the developed model is thermodynamic admissible.

\section{Convexity of the Yield Function}
\label{Ap:Conxevity}

\begin{proof} Recalling (\ref{eq:Yield_function}), we have
	\begin{align*}
	f\left(\tau, \alpha, D \right) & := \lvert \tau \rvert - \left[ (1-D)\tau^Y 
	+ R 
	\right],
	\end{align*}
	where $R(\alpha,D)=\left(1-D\right) \left[\mathbb{K}\, {}^C_0 
	\mathcal{D}^{\beta_K}_t \left(\alpha \right) + H\alpha \right]$. We fix $D$ 
	{since we are interested in showing the convexity of $f$ with 
		respect to $\tau$ and $R$. Let} $x_1 = \left(\tau_1, R_1\right)$, $x_2 
		= 
	\left(\tau_2, R_2\right)$, $\xi 
	\in [0,1]$, with $R_i = (\alpha_i,D) = \mathbb{K}\, {}^C_0 
	\mathcal{D}^{\beta_K}_t \left(\alpha \right)\big|_{\alpha = \alpha_i} + 
	H\alpha_i$. Therefore, we have:
	{\small
		\begin{align*}
		f\left(\xi x_1 + (1-\xi) x_2\right) = & \vert \xi \tau_1 + (1-\xi) 
		\tau_2 
		\vert - (1-D)\tau^Y - \xi R_1 - (1-\xi)R_2 , \\
		= & \vert \xi \tau_1 + (1-\xi)\tau_2\vert - \xi \left[(1-D) \tau_Y + 
		R_1\right] \\
		& - (1-\xi) \left[(1-D)\tau^Y + R_2\right], \\
		\le & \xi \left\lbrace\vert\tau_1\vert - \left[(1-D)\tau^Y + R_1 
		\right]\right\rbrace \textrm{(by Jensen inequality)} \\
		& + (1-\xi) \left\lbrace\vert\tau_2\vert - 
		\left[(1-D)\tau^Y + R_2 \right]\right\rbrace, \\
		= & \xi f(x_1) + (1-\xi) f(x_2).
		\end{align*}}
\end{proof}

\section{Local Truncation Error for the Free-Energy Discretization}
\label{Ap:Truncation}

We prove the local truncation error (\ref{eq:r_bound}) for 
the discretized Helmholtz free-energy density. Before we prove it, we need 
the following result.
\begin{lemma}\label{lemma1}
	Let $\beta \in (0,1)$, then
	\begin{align}
	\int_{t_i}^{t_{i+1}} \left[(t_{n+1} - s)^{-\beta} - (2t_{n+1} - 
	s)^{-\beta}\right] ds \le C_1\Delta t^{1-\beta},~~~0 \le i \le n,
	\end{align}
	where $C_1$ is a constant independent of $\Delta t$.
\end{lemma}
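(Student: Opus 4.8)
The plan is to evaluate both pieces of the integrand in closed form and then bound the resulting difference uniformly in $i$ and $n$. First I would substitute $u = t_{n+1}-s$ in the first term and $u = 2t_{n+1}-s$ in the second, using $t_m = m\Delta t$. Each integral then becomes $\int u^{-\beta}\,du$ over an interval of length $\Delta t$, and after factoring out $\Delta t^{1-\beta}/(1-\beta)$ I obtain
\begin{equation*}
\int_{t_i}^{t_{i+1}}\!\big[(t_{n+1}-s)^{-\beta}-(2t_{n+1}-s)^{-\beta}\big]\,ds = \frac{\Delta t^{1-\beta}}{1-\beta}\,\Big[g(k+1)-g(k)-\big(g(n+k+2)-g(n+k+1)\big)\Big],
\end{equation*}
where $g(x):=x^{1-\beta}$ and $k := n-i \ge 0$. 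I would remark that the case $i=n$ (that is, $k=0$) poses no difficulty: the apparent singularity of $(t_{n+1}-s)^{-\beta}$ at $s=t_{n+1}$ is integrable and is already accounted for by the finite value $g(1)-g(0)=1$.

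The key structural observation is that the bracket is a difference of two consecutive increments of $g$. Since $0<1-\beta<1$, the function $g$ is concave, so its increments $g(m+1)-g(m)$ are nonincreasing in the starting index $m$. Because $n+k+1 \ge k$ for every $n\ge 0$, the ``far'' increment $g(n+k+2)-g(n+k+1)$ is no larger than the ``near'' increment $g(k+1)-g(k)$. In particular the bracket is nonnegative, and discarding the positive far increment bounds it above by $g(k+1)-g(k)$.

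Finally I would bound the near increment uniformly: since $g(m+1)-g(m)$ is nonincreasing in $m$, its maximum over $m\ge 0$ is attained at $m=0$, where it equals $g(1)-g(0)=1$. Hence the bracket is at most $1$, the whole expression is bounded by $\Delta t^{1-\beta}/(1-\beta)$, and the claim holds with $C_1 = 1/(1-\beta)$. The only point requiring care — and the closest thing to an obstacle — is making the concavity/monotonicity comparison uniform in \emph{both} $i$ and $n$, so that the constant genuinely does not depend on the grid; writing the bracket explicitly in terms of $g$ and $k$ makes this transparent.
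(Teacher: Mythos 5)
Your proof is correct and follows essentially the same route as the paper's: both evaluate the integral exactly to reduce the claim to bounding a difference of increments of $x\mapsto x^{1-\beta}$, and both arrive at $C_1=1/(1-\beta)$. The only cosmetic difference is the final step — the paper uses $(m+1)^{1-\beta}-m^{1-\beta}=(1-\beta)\int_{-1}^{0}(m-s)^{-\beta}\,ds\le(1-\beta)m^{-\beta}$ for $m\ge 1$ and treats $i=n$ separately, whereas your concavity/monotone-increment argument handles all $0\le i\le n$ uniformly.
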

\begin{proof}
	We can obtain
	{\small
		\begin{align*}
		&\int_{t_i}^{t_{i+1}} \left[(t_{n+1} - s)^{-\beta} - (2t_{n+1} - 
		s)^{-\beta}\right] ds \\
		= & -\frac{(t_{n+1} - s)^{1-\beta}}{1-\beta} 
		\Bigg|_{t_i}^{t_{i+1}} +\frac{(2 t_{n+1} - s)^{1-\beta}}{1-\beta} 
		\Bigg|_{t_i}^{t_{i+1}} \\
		= & \frac{\Delta t^{1-\beta}}{1-\beta} \left[ (n+1-i)^{1-\beta} - 
		(n-i)^{1-\beta} + (2n+1-i)^{1-\beta} - (2n+2-i)^{1-\beta}\right].
		\end{align*}
	}
	Since
	\begin{align*}
	(n+1)^{1-\beta} - n^{1-\beta} = (1-\beta)\int_{-1}^0 (n-s)^{-\beta} ds \le 
	(1-\beta) n^{-\beta},~~~n \ge 1,
	\end{align*}
	then, when $0 \le i \le n-1$, we have
	{\small
		\begin{align*}
		\int_{t_i}^{t_{i+1}} \left[(t_{n+1} - s)^{-\beta} - (2t_{n+1} - 
		s)^{-\beta}\right] ds & \le \Delta t^{1-\beta} 
		\left[\frac{1}{(n-i)^{\beta}} 
		- \frac{1}{(2n+1-i)^{\beta}} \right] \\
		& \le \frac{\Delta 
			t^{1-\beta}}{(n-i)^{\beta}},
		\end{align*}
	}
	for $i=n$, it holds that
	\begin{align*}
	\int_{t_n}^{t_{n+1}} \left[(t_{n+1} - s)^{-\beta} - (2t_{n+1} - 
	s)^{-\beta}\right] ds & = \frac{\Delta t^{1-\beta}}{1-\beta} \left[ 1 + 
	(n+1)^{1-\beta} - (n+2)^{1-\beta}\right] \nonumber\\
	& \le \frac{\Delta t^{1-\beta}}{1-\beta} \left[ 1 - 
	\frac{1-\beta}{(n+1)^{\beta}} \right] \le \frac{\Delta 
		t^{1-\beta}}{1-\beta}.
	\end{align*}
	Therefore this lemma is proved. Next, we prove the local truncation error 
	for the free-energy discretization. From
	\begin{align*}
	\rho \psi(\varepsilon_{n+1}) =& \tilde{\mathbb{E}} \int_0^{t_{n+1}} 
	\int_0^{t_{n+1}} \frac{\dot{\varepsilon}(s_1) \dot{\varepsilon}(s_2) 
	}{(2t_{n+1} - s_1 -s_2)^{\beta}} ds_1 ds_2 \nonumber\\
	=& \tilde{\mathbb{E}} \sum_{i,j=0}^n \int_{t_i}^{t_{i+1}} 
	\int_{t_j}^{t_{j+1}} \frac{\Delta \varepsilon_{i+1} 
		\Delta \varepsilon_{j+1}}{\Delta t^2(2t_{n+1} - s_1 
		-s_2)^{\beta}} ds_1 ds_2 + \tilde{r}_{\Delta t}^{n+1},
	\end{align*}
	with $\tilde{\mathbb{E}} = \frac{\mathbb{E}}{2\Gamma(1-\beta)}$ and $\Delta 
	\varepsilon_{k+1} = \varepsilon_{k+1} - \varepsilon_k$. We know 
	that
	{\small
		\begin{align*}
		\left| \tilde{r}_{\Delta t}^{n+1} \right|=& \Bigg|\tilde{\mathbb{E}} 
		\sum_{i,j=0}^n \int_{t_i}^{t_{i+1}} \int_{t_j}^{t_{j+1}} (2t_{n+1} - 
		s_1 
		-s_2)^{-\beta} \left[\dot{\varepsilon}(s_1) \dot{\varepsilon}(s_2) -  
		\frac{\Delta \varepsilon_{i+1} \Delta \varepsilon_{j+1}}{\Delta 
			t^2}\right] 
		ds_1 ds_2\Bigg| \\
		=& \Bigg|\tilde{\mathbb{E}} \sum_{i,j=0}^n \int_{t_i}^{t_{i+1}} 
		\int_{t_j}^{t_{j+1}} (2t_{n+1} - s_1 -s_2)^{-\beta} 
		\left[\dot{\varepsilon}(s_1) \dot{\varepsilon}(s_2) - 
		\dot{\varepsilon}(s_1) \frac{\Delta \varepsilon_{j+1}}{\Delta t} 
		\right.\\
		& \left. + \dot{\varepsilon}(s_1) \frac{\Delta 
			\varepsilon_{j+1}}{\Delta t} 
		- \frac{\Delta \varepsilon_{i+1} \Delta \varepsilon_{j+1}}{\Delta 
			t^2}\right] ds_1 ds_2\Bigg| \\
		=& \Bigg|\tilde{\mathbb{E}} \sum_{i,j=0}^n \int_{t_i}^{t_{i+1}} 
		\int_{t_j}^{t_{j+1}} (2t_{n+1} - s_1 -s_2)^{-\beta} 
		\left[\dot{\varepsilon}(s_1) \left(\dot{\varepsilon}(s_2) - 
		\frac{\Delta \varepsilon_{j+1}}{\Delta t} \right) \right. \\
		& \left. + \frac{\Delta \varepsilon_{j+1}}{\Delta t} \left( 
		\dot{\varepsilon}(s_1) - \frac{\Delta \varepsilon_{i+1}}{\Delta 
			t}\right) 
		\right] ds_1 ds_2\Bigg| \nonumber\\
		\le& \tilde{\mathbb{E}} \Bigg|\sum_{i,j=0}^n \int_{t_i}^{t_{i+1}} 
		\int_{t_j}^{t_{j+1}} (2t_{n+1} - s_1 -s_2)^{-\beta} 
		\dot{\varepsilon}(s_1) 
		\left(\dot{\varepsilon}(s_2) - \frac{\Delta \varepsilon_{j+1}}{\Delta 
			t} 
		\right) ds_1 ds_2\Bigg| \\
		& + \tilde{\mathbb{E}} \Bigg|\sum_{i,j=0}^n \frac{\Delta 
			\varepsilon_{j+1}}{\Delta t} \int_{t_i}^{t_{i+1}} 
		\int_{t_j}^{t_{j+1}} 
		(2t_{n+1} - s_1 -s_2)^{-\beta} \left( \dot{\varepsilon}(s_1) - 
		\frac{\Delta \varepsilon_{i+1}}{\Delta t}\right) ds_1 ds_2\Bigg| 
		\\
		:= & I_1 + I_2,
		\end{align*}
	}
	where 
	{\small$$I_1 = \tilde{\mathbb{E}} \Bigg|\sum_{i,j=0}^n \int_{t_i}^{t_{i+1}} 
		\int_{t_j}^{t_{j+1}} (2t_{n+1} - s_1 -s_2)^{-\beta} 
		\dot{\varepsilon}(s_1) 
		\left(\dot{\varepsilon}(s_2) - \frac{\Delta \varepsilon_{j+1}}{\Delta 
		t} 
		\right) ds_1 ds_2\Bigg|,$$}
	and
	{\small$$I_2 = \tilde{\mathbb{E}} \Bigg|\sum_{i,j=0}^n \frac{\Delta 
			\varepsilon_{j+1}}{\Delta t} \int_{t_i}^{t_{i+1}} 
		\int_{t_j}^{t_{j+1}} (2t_{n+1} - s_1 -s_2)^{-\beta} \left( 
		\dot{\varepsilon}(s_1) - \frac{\Delta \varepsilon_{i+1}}{\Delta 
			t}\right) ds_1 ds_2\Bigg|.$$}
	
	Assume $\varepsilon(t) \in C^2[0,T]$, then one can obtain that:
	$$\varepsilon(t) \le C_2,~~~\dot{\varepsilon}(t) \le C_3,~t \in [0,T].$$
	On each small interval $[t_i,t_{i+1}]~(0 \le i \le n)$, denoting the linear 
	interpolation function of $\varepsilon(t)$ as $\Pi_i \varepsilon(t)$:
	\begin{align*}
	\Pi_i \varepsilon(t) = \frac{t-t_{i+1}}{t_i - t_{i+1}} \varepsilon_i + 
	\frac{t-t_i}{t_{i+1} - t_i}\varepsilon_{i+1},
	\end{align*}
	it follows from the linear interpolation theory that
	\begin{align*}
	\varepsilon(t) - \Pi_i \varepsilon(t) = \frac{\varepsilon''(\xi_i)}{2} 
	(t-t_i)(t-t_{i+1}) \le c_i \Delta t^2,~~~t \in [t_i,t_{i+1}],~\xi_i \in 
	(t_i,t_{i+1}),
	\end{align*}
	with $0 \le i \le n$, and here $c_i$ is a constant independent of $\Delta 
	t$.
	
	For $I_1$, we have
	{\small
		\begin{align*}
		I_1 = & \tilde{\mathbb{E}} \Bigg|\sum_{i,j=0}^n \int_{t_i}^{t_{i+1}} 
		\dot{\varepsilon}(s_1) \int_{t_j}^{t_{j+1}} (2t_{n+1} - s_1 
		-s_2)^{-\beta} 
		\left[\varepsilon(s_2) - \Pi_j \varepsilon(s_2) \right]' ds_1 
		ds_2\Bigg| \\
		= & \tilde{\mathbb{E}} \Bigg|\sum_{i,j=0}^n \int_{t_i}^{t_{i+1}} 
		\dot{\varepsilon}(s_1) \int_{t_j}^{t_{j+1}} (2t_{n+1} - s_1 
		-s_2)^{-\beta} 
		d \left[\varepsilon(s_2) - \Pi_j \varepsilon(s_2) \right] 
		ds_1\Bigg| \\
		= & \beta \tilde{\mathbb{E}} \Bigg|\sum_{i,j=0}^n \int_{t_i}^{t_{i+1}} 
		\dot{\varepsilon}(s_1) \int_{t_j}^{t_{j+1}} \left[\varepsilon(s_2) - 
		\Pi_j \varepsilon(s_2) \right]  (2t_{n+1} - s_1 -s_2)^{-\beta-1} 
		ds_1 
		ds_2\Bigg| \\
		\le & \beta \tilde{\mathbb{E}} \Bigg|\sum_{i,j=0}^n 
		\int_{t_i}^{t_{i+1}} 
		\dot{\varepsilon}(s_1) \int_{t_j}^{t_{j+1}} c_j \Delta 
		t^2 (2t_{n+1} - s_1 -s_2)^{-\beta-1} ds_1 ds_2\Bigg| \\
		\le & \beta \tilde{\mathbb{E}} C_4 \Delta t^2 \Bigg|\sum_{i=0}^n 
		\int_{t_i}^{t_{i+1}} \dot{\varepsilon}(s_1) \int_{0}^{t_{n+1}} 
		(2t_{n+1} - 
		s_1 -s_2)^{-\beta-1} ds_1 ds_2\Bigg| \\
		= & \tilde{\mathbb{E}} C_4 \Delta t^2 \Bigg|\sum_{i=0}^n 
		\int_{t_i}^{t_{i+1}} 
		\dot{\varepsilon}(s_1) \left[(t_{n+1} - s_1)^{-\beta} - (2t_{n+1} - 
		s_1)^{-\beta}\right] ds_1\Bigg| \\
		\le & \tilde{\mathbb{E}} C_3 C_4 \Delta t^2 \Bigg|\sum_{i=0}^n 
		\int_{t_i}^{t_{i+1}}  \left[(t_{n+1} - s_1)^{-\beta} - (2t_{n+1} - 
		s_1)^{-\beta}\right] ds_1\Bigg|,
		\end{align*}}
	where $C_4=\max\limits_{0\le j \le n} c_j$. For $I_2$, it holds that
	{\small
		\begin{align*}
		I_2 = & \tilde{\mathbb{E}} \Bigg|\sum_{i,j=0}^n \frac{\Delta 
			\varepsilon_{j+1}}{\Delta t} \int_{t_i}^{t_{i+1}} 
		\int_{t_j}^{t_{j+1}} (2t_{n+1} - s_1 -s_2)^{-\beta} \left[ 
		\varepsilon(s_1) 
		- \Pi_i \varepsilon(s_1)\right]' ds_1 ds_2\Bigg| \\
		= & \tilde{\mathbb{E}} \Bigg|\sum_{i,j=0}^n \frac{\Delta 
			\varepsilon_{j+1}}{\Delta t} \int_{t_j}^{t_{j+1}} 
		\int_{t_i}^{t_{i+1}} 
		(2t_{n+1} - s_1 -s_2)^{-\beta} d\left[ \varepsilon(s_1) - \Pi_i 
		\varepsilon(s_1)\right] ds_2\Bigg| \\
		= & \beta \tilde{\mathbb{E}} \Bigg|\sum_{i,j=0}^n \frac{\Delta 
			\varepsilon_{j+1}}{\Delta t} \int_{t_j}^{t_{j+1}} 
		\int_{t_i}^{t_{i+1}} \left[ \varepsilon(s_1) - \Pi_i 
		\varepsilon(s_1)\right] (2t_{n+1} - s_1 -s_2)^{-\beta-1} ds_1 
		ds_2\Bigg| \\
		\le & \beta \tilde{\mathbb{E}} \Bigg|\sum_{i,j=0}^n \frac{\Delta 
			\varepsilon_{j+1}}{\Delta t} \int_{t_j}^{t_{j+1}} 
		\int_{t_i}^{t_{i+1}} c_i \Delta t^2 (2t_{n+1} - s_1 -s_2)^{-\beta-1} 
		ds_1 
		ds_2\Bigg| \\
		\le & \beta \tilde{\mathbb{E}} C_5 \Delta t \Bigg|\sum_{j=0}^n 
		\Delta \varepsilon_{j+1} \int_{t_j}^{t_{j+1}} 
		\int_{0}^{t_{n+1}} (2t_{n+1} - s_1 -s_2)^{-\beta-1} ds_1 ds_2\Bigg| \\
		= & \tilde{\mathbb{E}} C_5 \Delta t \Bigg|\sum_{j=0}^n 
		\Delta \varepsilon_{j+1} \int_{t_j}^{t_{j+1}} \left[(t_{n+1} - 
		s_2)^{-\beta} - 
		(2t_{n+1} - s_2)^{-\beta}\right] ds_2\Bigg|,
		\end{align*}}
	where $C_5 = \max\limits_{0\le i \le n} c_i$. Then, it follows from Lemma 
	\ref{lemma1} that
	\begin{align*}
	I_1 + I_2 \le & \frac{n \mathbb{E} C_1 C_3 C_4 \Delta 
		t^{3-\beta}}{2\Gamma(1-\beta)} + \frac{\mathbb{E} C_1 C_5 \Delta 
		t^{2-\beta}}{2\Gamma(1-\beta)} \Bigg|\sum_{j=0}^n \Delta 
	\varepsilon_{j+1} 
	\Bigg| \\
	=& \frac{\mathbb{E} C_1 C_3 C_4 T \Delta t^{2-\beta}}{2\Gamma(1-\beta)} + 
	\frac{\mathbb{E} C_1 C_5 \Delta t^{2-\beta}}{2\Gamma(1-\beta)} 
	\big|\varepsilon_{n+1} - \varepsilon_0\big| \\
	\le & \frac{\mathbb{E} C_1(C_3 C_4 T + 2C_2 C_5)}{2\Gamma(1-\beta)} \Delta 
	t^{2-\beta}.
	\end{align*}
\end{proof}

\section*{Acknowledgments}
This work was supported by the ARO YIP Award number (W911NF-19-1-0444), the NSF 
Award 
(DMS-1923201), also 
partially by the MURI/\\ARO Award (W911NF-15-1-0562) and the AFOSR YIP Award 
(FA9550-17-1-0150). M. D'Elia was supported by Sandia National Laboratories 
(SNL), SNL is a multimission laboratory managed and operated by National 
Technology and Engineering Solutions of Sandia, LLC., a wholly owned subsidiary 
of Honeywell International, Inc., for the U.S. Department of Energys National 
Nuclear Security Administration contract number DE-NA0003525. This paper 
describes objective technical results and analysis. Any subjective views or 
opinions that might be expressed in the paper do not necessarily represent the 
views of the U.S. Department of Energy or the United States Government. 
SAND2019-14071 R.

\bibliographystyle{siamplain}
\bibliography{Failure_references}

\end{document}